\newlength\aftertitskip     \newlength\beforetitskip
\newlength\interauthorskip  \newlength\aftermaketitskip
\def\maketitle{\par
 \begingroup
   \def\thefootnote{\fnsymbol{footnote}}
   \def\@makefnmark{\hbox to 4pt{$^{\@thefnmark}$\hss}}
   \@maketitle \@thanks
 \endgroup
\setcounter{footnote}{0}
 \let\maketitle\relax \let\@maketitle\relax
 \gdef\@thanks{}\gdef\@author{}\gdef\@title{}\let\thanks\relax}
\def\@startauthor{\noindent \normalsize\bf}
\def\@endauthor{}
\def\@starteditor{\noindent \small {\bf Editor:~}}
\def\@endeditor{\normalsize}
\def\@maketitle{\vbox{\hsize\textwidth
 \linewidth\hsize \vskip \beforetitskip
 {\begin{center} \LARGE\@title \par \end{center}} \vskip \aftertitskip
 {\def\and{\unskip\enspace{\rm and}\enspace}%
  \def\addr{\small\it}%
  \def\email{\hfill\small\tt}%
  \def\name{\normalsize\bf}%
  \def\AND{\@endauthor\rm\hss \vskip \interauthorskip \@startauthor}
  \@startauthor \@author \@endauthor}
}}
\definecolor{darkblue}{rgb}{0.0,0.0,0.65}
\definecolor{darkred}{rgb}{0.68,0.05,0.0}
\definecolor{darkgreen}{rgb}{0.0,0.29,0.29}
\definecolor{darkpurple}{rgb}{0.47,0.09,0.29}
\newtheorem{theorem}{Theorem}
\newtheorem{defn}[theorem]{Definition}
\newtheorem{lem}[theorem]{Lemma}
\newtheorem{prop}[theorem]{Proposition}
\newtheorem{prob}[theorem]{Problem}
\newtheorem{example}[theorem]{Example}
\newtheorem{rmk}[theorem]{Remark}
\newtheorem{cor}[theorem]{Corollary}
\newcommand{\Xc}{\mathcal{X}}
\newcommand{\Ac}{\mathcal{A}}
\newcommand{\BL}{{\rm BL}}
\newcommand{\nlsum}{\sum\nolimits}
\newcommand{\nlprod}{\prod\nolimits}
\newcommand{\reals}{\mathbb{R}}
\newcommand{\R}{\mathbb{R}}
\newcommand{\posdef}{\mathbb{P}}
\newcommand{\pd}{\posdef}
\renewcommand{\H}{\mathbb{H}}
\newcommand{\vw}{\bm{w}}
\newcommand{\ip}[2]{\langle {#1},\, {#2} \rangle}
\newcommand{\norm}[1]{\|{#1}\|}
\DeclareMathOperator{\trace}{tr}
\renewcommand{\H}{\mathbb{H}}
\numberwithin{equation}{section}
\title{Computing Brascamp-Lieb Constants \\ through the lens of Thompson Geometry}
\author{\name Melanie Weber \email{mweber@seas.harvard.edu}\\
  \addr{Harvard University}\\
  \name Suvrit Sra \email{suvrit@mit.edu}\\
  \addr{MIT, Laboratory for Information and Decision Systems}
}
\begin{document}
\maketitle

\begin{abstract}
This paper studies algorithms for efficiently computing \emph{Brascamp--Lieb} constants, a task that has recently received much interest. In particular, we reduce the computation to a nonlinear matrix-valued iteration, whose convergence we analyze through the lens of fixed-point methods under the well-known Thompson metric. This approach permits us to obtain (weakly) polynomial time guarantees, and it offers an efficient and transparent alternative to previous state-of-the-art approaches based on Riemannian optimization and geodesic convexity.
\end{abstract}

\section{Introduction}
In a seminal paper,~\citep{BL1} introduced a class of inequalities (\emph{Brascamp--Lieb inequalities}) that generalizes many well-known inequalities, including H{\"o}lder's inequality, the sharp Young convolution inequality, and the Loomis--Whitney inequality.  As such,  the class of Brascamp-Lieb inequalities provides a valuable tool of use to many areas of mathematics~\citep{tao-paper}.   Brascamp--Lieb inequalities have also been applied in Machine Learning~\citep{dvir2016rank,pmlr-v30-Hardt13} and Information Theory~\citep{carlen2009subadditivity,liu2016smoothing}. 

Each Brascamp--Lieb (BL) inequality is characterized by a \emph{BL-datum} $(\Ac,w)$, where $\Ac = \big( A_1, \dots, A_m \big)$ is a tuple of linear transformations and  $w=\lbrace w_j \rbrace$ is a set of exponents.  Of central importance is the feasibility of a BL-datum, which is characterized by the following quantity:
\begin{defn}[Brascamp--Lieb Constant]
Let $\Ac := \big( A_1, \dots, A_m \big)$ be a tuple of surjective linear transformations $A_j: \reals^d \rightarrow \reals^{d_j}$ and $\vw :=(w_1,\ldots,w_m)$ a real vector with positive entries.  For each BL-datum $(\Ac,\vw)$ there exists a constant $C(\Ac,\vw)$ (finite or infinite) such that for any set of real-valued, non-negative, Lebesgue-measurable functions $f_j: \R^{d_j} \rightarrow \R, \; j \in [m]$, the following (\emph{Brascamp--Lieb}) inequality holds
\begin{equation}\label{eq:BL-inequ}
\int_{x \in \R^d} \Bigl( \nlprod_{j \in [m]} f_j (A_j x)^{w_j} \Bigr) dx 
\leq C(\Ac,\vw) \nlprod_{j \in [m]} \Bigl( \int_{x \in \R^{d_j}} f_j (x) dx	\Bigr)^{w_j} \; .
\end{equation}
The smallest such constant that satisfies~\eqref{eq:BL-inequ} for a BL-datum $(\Ac,w)$ and for any choice of $(f_1, \dots, f_m)$, is called the \emph{Brascamp--Lieb constant} $\BL(\Ac,\vw)$.  The datum $(\Ac,\vw)$ is said to be \emph{feasible} if $\BL(\Ac,\vw) < \infty$, and \emph{infeasible} otherwise.
\end{defn}

A necessary and sufficient condition for feasibility of a Brascamp--Lieb data is given in~\citep[Theorem~1.13]{tao-paper} (see Theorem~\ref{thm:feasible}).
Our aim is to characterize Brascamp--Lieb constants of feasible data; as we will see in section~\ref{sec:BL-basics}, we have
\begin{align}\label{eq:BL-const}
\sup_{Z \in \; \prod_{j \in [m]} \mathbb{P}_{d_j}} \biggl[\BL(\Ac,\vw; Z) := \biggl(	\frac{\prod_{j \in [m]} \det(Z_j)^{w_j}}{\det \bigl( \sum_{j \in [m]} w_j A_j^* Z_j A_j\bigr)}	\biggr)^{1/2} \biggr] \; ,
\end{align}
where the supremum is taken over a product of positive definite matrices ($\pd_d$).
Then, the key question is whether there exists a solution to the following problem:
\begin{prob}\label{prob:BL}
Given a feasible datum $(\Ac,\vw)$, can we compute an $\epsilon$-approximation to $\BL(\Ac,\vw)$, in time that is polynomial in the number of bits required to represent the datum? That is, for any $\epsilon>0$, can we compute a constant $C$, such that 
\begin{align*}
\BL(\Ac,\vw) \leq C \leq (1+\epsilon) \BL(\Ac,\vw),
\end{align*}
in time that is polynomial in the bit length of the datum $(\Ac, \vw)$ and $\log(1/\epsilon)$.
\end{prob}

\noindent We will see below (section~\ref{sec:BL-basics}) that the computation of the supremum in~\eqref{eq:BL-const} is equivalent to solving the following optimization task~\citep[Definition~7]{vishnoi}:
\begin{prop}
\label{prop:BL-min}
    Let $(\Ac,\vw)$ denote a simple BL-datum and let $\vw=(w_1, \dots, w_n)$ with $w_j \in (0,1)$ and $\sum_{j \in [m]} w_j =1$. Then Problem~\ref{prob:BL} is equivalent to solving the following optimization problem:
    \begin{equation}\label{eq:F}
        \min_{X \in \mathbb{P}_d} F(X) := \nlsum_{j \in [m]} w_j \log \det \big(	A_j^\ast X A_j	\big) - \log \det (X) \; .
    \end{equation}
\end{prop}

 A seminal result by~\citep[Theorem 3.2]{lieb} ensures that the optimum is positive definite, which allows for solving~\eqref{eq:F} over the positive definite matrices ($\pd_d$).
Importantly, problem~\eqref{eq:F} turns out to be a geodesically convex optimization problem on the manifold  of symmetric positive definite matrices (under a suitable Riemannian metric~\citep{vishnoi}).

Given that $F$ in~\eqref{eq:F} is geodesically convex, it is natural to consider Riemannian optimization approaches. For using first-order methods such as Riemannian Gradient Descent~\citep{absil2009optimization,zhang}, geodesic convexity  proves key to ensuring sublinear rates, such as $O \big(\frac{1}{\epsilon} \big)$, on \emph{function suboptimality} $F(X_k)-F(X^*) \le \epsilon$. We will see below that the optimum lies in a compact convex set; hence \eqref{eq:F} could also be viewed as a constrained optimization task with a geodesic ball constraint. We could exploit this constraint by using first-order constrained optimization methods such as Projected Riemannian Gradient Descent~\citep{absil2009optimization,zhang} or Riemannian Frank--Wolfe~\citep{weber2022riemannian,weber2021projection}, both of which again achieve only sublinear convergence rates on function suboptimality.  Furthermore,  for a different geodesically convex formulation of~\eqref{eq:BL-const} due to~\citep{garg2018algorithmic}, it is known that second-order methods can attain a faster $O \bigl({\rm poly} \bigl( \log \frac{1}{\epsilon}\bigr)\bigr)$ convergence rate~\citep{allen-zhu}. Under additional spectral assumptions, even linear convergence at rate $O \bigl( \log \frac{1}{\epsilon}\bigr)$ is possible~\citep{kwok}.  

\subsection{Our approach} We take a different route in this paper; our idea is to analyze a \emph{fixed-point iteration} that solves~\eqref{eq:F}. In particular, we study Picard iterations obtained by variations of the nonlinear map $G: \mathcal{X} \rightarrow \mathcal{X}$ (where $\Xc$ is a suitable subset of $\pd_d$), defined as
\begin{equation}\label{eq:map}
G: \quad X \mapsto \Big(\nlsum_{j \in [m]} w_j A_j \big(	A_j^\ast X A_j 	\big)^{-1} A_j^\ast \Big)^{-1} \; .
\end{equation}

The simplicity of the map~\eqref{eq:map} makes a fixed-point approach particularly attractive, since it avoids expensive Riemannian operations such as exponential maps and parallel transports, which are required by standard Riemannian optimization approaches. Empirically, iterating map~\eqref{eq:map} exhibits linear convergence -- see Figure~\ref{fig:BL} for numerical experiments with varying input sizes. The faster convergence in comparison with Riemannian gradient descent illustrates the attractiveness of our approach.
\begin{figure}[t]
    \centering
    \includegraphics[scale=0.15]{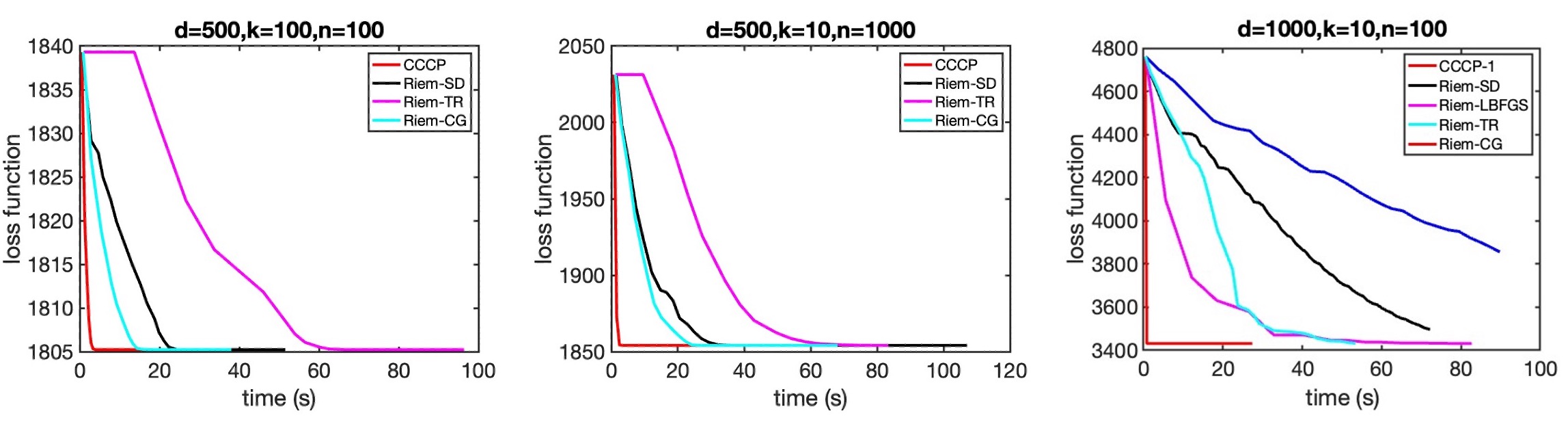}
    \caption{Empirical performance of the map $G$ (Eq.~\eqref{eq:map}) in comparison with first-order Riemannian Optimization methods (using the Manopt implementation~\citep{boumal_manopt_2014}): Riemannian Steepest Descent (Riem-SD), Riemannian Trustregions (Riem-TR) and Riemannian Conjugate Gradient (Riem-CG). The Brascamp--Lieb datum consists of a set of $n$ linear transformations $A_j: \R^d \rightarrow \R^k$ and an $n$-dimensional vector that characterizes the exponents. The reported value of the loss function is the value of the objective~\ref{eq:F}, i.e., the value attained by the BL constant.}
    \label{fig:BL}
\end{figure}

We will present a detailed non-asymptotic convergence analysis of a fixed-point approach based on iterating~\eqref{eq:map} for simple input data (see Def.~\ref{def:simple}). The key insight of our work is to analyze the nonlinear map $G$ through the lens of nonlinear Perron--Frobenius theory, which  views $\pd_d$ as a metric space endowed with the Thompson part metric. 
Our initial convergence analysis hinges on establishing \emph{non-expansivity} of $G$ with respect to the Thompson part metric, as well as \emph{asymptotic regularity} of its ``average map'' $G_t$, i.e.,
$\delta_{\rm T} \big( G_t(X_{k+1}), G_t(X_k) \big) \rightarrow 0$ ($G_t$ is defined in Eq.~\ref{eq:G-avg}; a formal definition of asymptotic regularity is given in Definition~\ref{def:asymp-reg}). Here, $\delta_{\rm T}$ denotes the Thompson metric, which is defined in Definition~\ref{def:thompson} below.
More precisely, we make use of the following result, which adapts a classical result on the convergence of non-expansive maps in Banach spaces (see, e.g.,~\citep[Theorem 1.2]{baillon_asymptotic_1978}) to Thompson geometry: 
\begin{theorem}\label{thm:s-h}
Let $T$ be a non-expansive and asymptotically regular nonlinear map $T: \Xc \rightarrow \Xc$ ($\Xc \subset \pd_d$), and let 
the set of its fixed points ${\rm Fix}(T) := \lbrace X^\ast \in \mathbb{P}_d: \; X^\ast = T(X^\ast) \rbrace$ be non-empty. Then the sequence $(X_k)_{k \in \mathbb{N}}$ defined by $X_{k+1}=T(X_k)$ ($X_0 \in \Xc$) converges to a point in ${\rm Fix}(T)$.
\end{theorem}

\noindent Our plan is as follows: We begin by showing that $G$ has a fixed point in some compact and convex set. 
We then establish the non-expansivity of $G$, which implies that its ``average map'' $G_t$ is non-expansive and has the same fixed point set. It further implies that $G_t$ is asymptotically regular, which ensures asymptotic convergence (following Theorem~\ref{thm:s-h}) to a fixed point of $G$. Subsequently, we refine our convergence analysis to seek non-asymptotic guarantees that ensure an $O \big(\log \frac{1}{\epsilon} \big)$ convergence rate. To this end, we show that a regularized variant of $G$ is \emph{strictly} contractive with respect to the Thompson part metric. Finally, we describe how this analysis implies a convergence rate for solving the original problem~\eqref{eq:F}, completing the analysis.

\subsection{Summary of contributions}
The main contribution of this work is a novel geometric lens on Brascamp-Lieb constants, which relies on a Finsler geometry on the manifold of positive definite matrices, induced by the Thompson part metric.
Specifically,  our contributions are as follows:
\begin{enumerate}
  \setlength{\itemsep}{0pt}
\item We propose fixed-point approaches for computing Brascamp--Lieb constants of feasible data that is obtained by analyzing the first-order optimality conditions of the geodesically convex problem~\eqref{eq:F}.
\item We analyze the proposed approach and derive non-asymptotic guarantees on its convergence for simple input data.  Specifically, we study a regularized variant of our fixed-point approach that solves the original problem~\eqref{eq:F} to $\epsilon$-accuracy with $O \Big( \log \big( \frac{1}{\epsilon},\frac{\Delta}{\delta^{3/2}} ,\sqrt{d}, \frac{1}{R}\big) \Big)$ iterations. 
Here, $d, \delta, \Delta, R$\footnote{Note that the constants $\delta, \Delta, R$ are not necessarily independent of each other. In particular, to be more precise, we should write $R(\delta)$, to indicate that $R$ may depend on $\delta$. However, to keep notation simple, we write just $R$.} are constants, which depend on the input datum $(\Ac,\vw)$ only.
\end{enumerate}
Our analysis leverages the Thompson part metric on the manifold of positive definite matrices to model convergence of the fixed-point iteration. To our knowledge, this is the first work that analyzes the computation of Brascamp--Lieb constants via Thompson geometry. 
We note that a similar Finslerian lens can be employed to understand other Picard iterations arising from problem~\eqref{eq:F}. We discuss this observation in more detail below.
Our proof techniques build on ideas from nonlinear Perron-Frobenius theory~\citep{lemmens_nussbaum_2012}, and may be of independent interest for related problems. 
Moreover, the simplicity of our approach and its ease of implementation make it attractive for applications. However, a key limitation of our approach is the difficulty of choosing a good initialization in practice, which relies on an efficient construction of the parameters $\delta, \Delta$. At present, we do not know of such a construction. We will discuss this limitation in detail in sections~\ref{sec:delta} and~\ref{sec:discussion}. 

\subsection{Outline}
The paper is structured as follows: In section~\ref{sec:background} we introduce basic background and notation, including Thompson geometry on the space of positive definite matrices and the class of Brascamp--Lieb inequalities. In section~\ref{sec:overview}, we provide an overview of the paper's main results and state the main theorems. In section~\ref{sec:related}, we review related literature and existing results on computing Brascamp--Lieb constants. Section~\ref{sec:proofs} provides formal proofs of the paper's findings. In section~\ref{sec:other-picard}, we provide a discussion on alternative Picard iterations that could be use to compute Brascamp--Lieb constants. We conclude with a general discussion of the paper's results and limitations, as well as avenues for future investigation, in section~\ref{sec:discussion}.

\section{Background and Notation}
\label{sec:background}
\subsection{Thompson geometry on positive definite matrices} 
We start by recalling some background and notation. Throughout, we consider optimization tasks on the manifold of positive definite matrices, i.e., 
\begin{align*}
\pd_d := \lbrace X \in \H_d: \; X \succ 0	\rbrace \; ,
\end{align*}
where $\H_d \subset \R^{d \times d}$ denotes the set of order-$d$ real symmetric matrices and ``$\succ$'' denotes the Löwner partial order on $\H_d$, where ``$X \succ Y$" (``$X \succeq Y$") implies that $X$ has only positive (non-negative) eigenvalues.  
With a slight abuse of notation, we use $\pd_d$ to denote \emph{symmetric} positive definite matrices.  Recall the \emph{Schatten $p$-norm}, which on $\mathbb{P}_d$ becomes $\norm{X}_p := \big( \trace X^p \big)^{1/p}$.  Important cases are the \emph{trace norm} $\norm{X}_1 = \trace(X)$ ($p=1$) and the \emph{Frobenius norm} $\norm{X}_{\rm F} =\big( \trace X^2\big)^{1/2}$ ($p=2$).  We further use the usual operator norm $\norm{\cdot}$; 
note that $\norm{X} \leq \norm{X}_{\rm F} \leq \sqrt{d} \norm{X}$.

Most Riemannian optimization literature that studies problems on the manifold $\mathbb{P}_d$ considers the Riemannian geometry induced by the following metric:
\begin{equation}
\ip{A}{B}_X := \trace   (X^{-1} A X^{-1} B), \quad X \in \pd_d, A,B \in T_X(\pd_d)= \mathbb{H}_d \; ,
\end{equation}
where the tangent space $\mathbb{H}_d$ is the space of symmetric matrices.  Equipped with this metric, $\pd_d$ is a Cartan--Hadamard manifold. Its (global) non-positive sectional curvature gives rise to many desirable properties, including geodesic convexity. 

While we use the Riemannian lens above for exploiting the geodesic convexity of~\eqref{eq:F}, our algorithm design and analysis views $\pd_d$ through the lens of nonlinear Perron--Frobenius theory~\citep{lemmens_nussbaum_2012} instead. 
Here, we view $\pd_d$ as a convex cone  that is endowed with the Thompson part metric, which induces a Finsler geometry~\citep{Finsler1918} on $\pd_d$. Our analysis does not require any specific Finslerian properties; hence, we recall only basic definitions and some key properties below. However, we briefly remark on a key difference between the Riemannian and Finslerian geometries: While on Riemannian manifolds the inner product defines the norm on the tangent spaces, norms need not be induced by inner products on Finslerian manifolds. Specifically, tangent spaces in the Thompson geometry resemble L-infinity spaces instead of the usual L-2 structure in the Riemannian setting.
\begin{defn}[Thompson metric]\label{def:thompson}
The Thompson metric $\delta_{\rm T}$ is defined as
\begin{equation}
\delta_{\rm T}(X,Y) := \log \max \lbrace M(X/Y),  M(Y/X)	\rbrace,
\end{equation}
where $M(X/Y) := \inf \lbrace	\lambda > 0 : X \preceq \lambda Y	\rbrace$.
\end{defn}
\begin{lem}[Properties of the Thompson metric]\label{lem:prop-dT}
We assume $X,Y \in \pd_d$.
\begin{enumerate}
\item $\delta_{\rm T} (X^{-1},Y^{-1}) = \delta_{\rm T} (X,Y)$;
\item $\delta_{\rm T} (B^\ast X B, B^\ast Y B) = \delta_{\rm T} (X,Y)$ for $B \in GL_n(\mathbb{C})$;
\item $\delta_{\rm T} (A^\ast X A, A^\ast Y A) \leq \delta_{\rm T} (X,Y) $ for $A \in \mathbb{C}^{d \times r}$, having full rank;
\item $\delta_{\rm T} (X^t, Y^t) \leq \vert t \vert \delta_{\rm T} (X,Y)$ for $t \in [-1,1]$;
\item $\delta_{\rm T} (A+B, C+D) \leq \max \lbrace	 \delta_{\rm T}(A,C), \delta_{\rm T}(B,D)	\rbrace$, which can be recursively generalized to sums with $m$ terms, i.e.
\begin{align*}
\delta_{\rm T} \Big( \nlsum_{j \in [m]} X_j, \nlsum_{j \in [m]} Y_j	\Big) \leq \max_{1 \leq j \leq m} \delta_{\rm T} (X_j, Y_j) \; ;
\end{align*}
\item $\delta_{\rm T} (X+A, Y+A) \leq \frac{\alpha}{\alpha + \beta} \delta_{\rm T} (X,Y)$, where $A \succeq 0$, $\alpha = \max \{\norm{X},\norm{Y}\}$, and $\beta = \lambda_{\min}(A)$.
\end{enumerate}
\end{lem}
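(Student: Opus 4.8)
The plan is to reduce everything to the variational description
$M(X/Y)=\inf\{\lambda>0: X\preceq\lambda Y\}=\lambda_{\max}(Y^{-1/2}XY^{-1/2})=\sup_{w\neq 0}\frac{w^*Xw}{w^*Yw}$ (with the supremum attained), together with the reformulation $\delta_T(X,Y)\le r\iff e^{-r}Y\preceq X\preceq e^{r}Y$ and the elementary fact $M(X/Y)\,M(Y/X)\ge 1$, so $\delta_T\ge 0$. Items (1)--(3) then follow from standard facts about the L\"owner order. For (1), inversion reverses $\preceq$ and $X^{-1}\preceq\lambda Y^{-1}\iff Y\preceq\lambda X$, hence $M(X^{-1}/Y^{-1})=M(Y/X)$ and the defining maximum is unchanged. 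For (2), congruence by an invertible $B$ is an order isomorphism, so $M(B^*XB/B^*YB)=M(X/Y)$ and likewise for the reversed ratio. For (3), congruence by an arbitrary $A$ preserves (but does not reflect) $\preceq$, so $X\preceq\lambda Y\Rightarrow A^*XA\preceq\lambda A^*YA$ gives $M(A^*XA/A^*YA)\le M(X/Y)$ — full column rank of $A$ ensures $A^*XA\succ 0$ — and the same for the reversed ratio.

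For (4) I would invoke the L\"owner--Heinz inequality: $0\preceq C\preceq D$ implies $C^{t}\preceq D^{t}$ for $t\in[0,1]$. Applied to $X\preceq\lambda Y$ this yields $X^{t}\preceq(\lambda Y)^{t}=\lambda^{t}Y^{t}$, so $M(X^{t}/Y^{t})\le M(X/Y)^{t}$ and similarly with $X,Y$ swapped; since $t\ge 0$ and $s\mapsto s^{t}$ is increasing, $\max\{M(X/Y)^{t},M(Y/X)^{t}\}=(\max\{M(X/Y),M(Y/X)\})^{t}$, and taking logarithms gives the claim for $t\in[0,1]$. The case $t\in[-1,0)$ reduces to this by writing $X^{t}=(X^{-1})^{|t|}$ and applying (1). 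For (5), set $s=\exp\max\{\delta_T(A,C),\delta_T(B,D)\}$; then $A\preceq sC$ and $B\preceq sD$, hence $A+B\preceq s(C+D)$, so $M((A+B)/(C+D))\le s$, and symmetrically for the reversed ratio, giving $\delta_T(A+B,C+D)\le\log s$; the $m$-term version follows by induction.

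The only genuinely delicate part is (6): comparing $X\preceq e^{d}Y$ and bounding $Y,A$ by scalar multiples of the identity is too lossy and does not reach the factor $\alpha/(\alpha+\beta)$. Instead I would argue test-vector by test-vector via $M((X+A)/(Y+A))=\sup_{w}\frac{w^*(X+A)w}{w^*(Y+A)w}$. For a unit vector $w$, put $x=w^*Xw$, $y=w^*Yw$, $a=w^*Aw$, so $0<x,y\le\alpha$, $a\ge\beta$, and $|\log(x/y)|\le d:=\delta_T(X,Y)$. The crux is the scalar inequality: for $x\ge y>0$ and $a\ge 0$,
\[
\log\tfrac{x+a}{y+a}\ \le\ \tfrac{x}{x+a}\,\log\tfrac{x}{y},
\]
which I would prove by setting $b=a/y$, $\tau=\log(x/y)\ge 0$ and checking that $g(\tau)=\log\frac{e^{\tau}+b}{1+b}$ and $h(\tau)=\frac{e^{\tau}}{e^{\tau}+b}\,\tau$ satisfy $g(0)=h(0)=0$ and $h'(\tau)-g'(\tau)=\tau\,\frac{e^{\tau}b}{(e^{\tau}+b)^{2}}\ge 0$. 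Since $x\mapsto\frac{x}{x+a}$ is increasing and $a\mapsto\frac{\alpha}{\alpha+a}$ decreasing, $\frac{x}{x+a}\le\frac{\alpha}{\alpha+\beta}$, so when $x\ge y$ the scalar bound gives $\frac{w^*(X+A)w}{w^*(Y+A)w}\le e^{\frac{\alpha}{\alpha+\beta}d}$, while when $x<y$ the ratio is $\le 1$; taking the supremum over $w$, then the symmetric statement with $X$ and $Y$ exchanged, yields $\delta_T(X+A,Y+A)\le\frac{\alpha}{\alpha+\beta}\delta_T(X,Y)$. I expect the main obstacle to be exactly identifying this scalar inequality and recognizing that one must work with Rayleigh quotients rather than a single operator comparison; the rest is bookkeeping.
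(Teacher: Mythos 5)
Your proof is correct in all six items. Note that the paper itself does not prove this lemma at all: it simply refers the reader to Sra--Hosseini (where these facts are collected from the nonlinear Perron--Frobenius literature, item (6) going back to Lee--Lim), so your write-up is a self-contained substitute rather than a parallel of an in-paper argument. Items (1)--(5) follow the standard order-theoretic route (inversion and congruence as order (anti-)isomorphisms, L\"owner--Heinz for (4), additivity of the order for (5)), which is exactly how these are proved in the cited sources. The genuinely nontrivial part is (6), and your treatment of it is both correct and well judged: you rightly observe that the naive operator-level attempt ``$X\preceq e^{d}Y$, then bound $Y\preceq\alpha I$, $A\succeq\beta I$'' cannot give the factor $\alpha/(\alpha+\beta)$ (indeed the sufficient condition $e^{d}Y+A\preceq e^{\frac{\alpha}{\alpha+\beta}d}(Y+A)$ already fails for scalars), and the reduction via $M(P/Q)=\sup_{w\neq 0}\frac{w^{*}Pw}{w^{*}Qw}$ is exactly what makes a pointwise scalar argument legitimate, since for each unit $w$ one has $x=w^{*}Xw\le\alpha$, $a=w^{*}Aw\ge\beta$ and $|\log(x/y)|\le\delta_T(X,Y)$. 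Your scalar inequality $\log\frac{x+a}{y+a}\le\frac{x}{x+a}\log\frac{x}{y}$ (for $x\ge y>0$, $a\ge0$) checks out, as does the derivative computation $h'(\tau)-g'(\tau)=\tau e^{\tau}b/(e^{\tau}+b)^{2}\ge0$, and the monotonicity step $\frac{x}{x+a}\le\frac{\alpha}{\alpha+\beta}$ uses precisely $x\le\alpha$, $a\ge\beta$. Taking the supremum over $w$ and then swapping $X$ and $Y$ closes the argument. This is arguably more elementary and more transparent than chasing the constant through the operator-inequality proofs in the references; the only cosmetic caveat is that in item (2) the paper's $GL_n(\mathbb{C})$ should be read as $GL_d(\mathbb{C})$, and in (3) full rank must mean full column rank (as you note) so that $A^{*}XA\succ0$.
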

\noindent For a proof of Lemma~\ref{lem:prop-dT} we refer the reader to~\citep{sra-hosseini}.\\

\noindent Furthermore, we will use the following notions of non-expansitivity and contractivity with respect to the Thompson metric.
\begin{defn}\label{def:contract}
We say that a map $G: \Xc \rightarrow \Xc$ ($\Xc \subset \pd_d$) is
\begin{enumerate}
\item \emph{non-expansive} if $\delta_{\rm T}(G(X),G(Y)) \leq \delta_{\rm T} (X,Y)$ for all $X,Y \in \mathcal{X}$;
\item \emph{contractive} if $\delta_{\rm T}(G(X),G(Y)) < \delta_{\rm T}(X,Y)$ for all $X,Y \in \mathcal{X}$.
\end{enumerate}
\end{defn}
\noindent Lastly,  recall that the \emph{Kronecker product} $X \otimes Y$ of matrices $X \in \R^{m \times n}, Y \in \R^{p \times q}$ is given as $(X \otimes Y)_{\alpha \beta} = a_{ij} b_{kl}$, where $\alpha = p(i-1)+k$, $\beta = q(j-1)+l$.  The Kronecker product is bilinear and associative.

\subsection{Brascamp--Lieb inequalities}
\subsubsection{Basic notation}\label{sec:BL-basics}
The goal of the present paper is the computation of Brascamp--Lieb constants for feasible data.  The following seminal result by~\citep{lieb} allows for phrasing this problem as an optimization task on $\pd_d$.
\begin{theorem}[\citep{lieb}, Theorem 3.2]
Let $(\Ac,\vw)$ denote a BL-datum, where $A_j: \reals^d \rightarrow \reals^{d_j}$ for all $A_j \in \Ac$ ($1\le j\le m$).  Consider the function
\begin{equation}\label{eq:BL-lieb}
\BL(\Ac,\vw; Z) := \left(	\frac{\prod_{j \in [m]} \det(Z_j)^{w_j}}{\det \bigl( \sum_{j \in [m]} w_j A_j^* Z_j A_j\bigr)}	\right)^{1/2}, 
\end{equation}
where $\bm{Z} := (Z_1,\dots, Z_m) \in \; \prod_{j \in [m]} \pd_{d_j}$.  The Brascamp--Lieb constant for $(\Ac,\vw)$ can be computed as
\begin{equation}\label{eq:BL-sup}
\sup_{Z \in \; \prod_{j \in [m]} \pd_{d_j}} \BL \big(\Ac,\vw; Z \big) \; .
\end{equation}
\end{theorem}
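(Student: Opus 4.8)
The plan is to prove the two inequalities $\BL(\Ac,\vw)\ge\sup_Z\BL(\Ac,\vw;Z)$ and $\BL(\Ac,\vw)\le\sup_Z\BL(\Ac,\vw;Z)$ separately, writing $\BL(\Ac,\vw)$ for the best constant $C(\Ac,\vw)$ of~\eqref{eq:BL-inequ}, equivalently for the supremum over all admissible $f_j$ of the ratio of the left-hand side to the right-hand side of~\eqref{eq:BL-inequ}. When the datum is feasible both quantities are finite and the argument below yields equality; when it is infeasible, $\BL(\Ac,\vw)=\infty$, and the heat-flow argument in the second step, applied to a sequence of $f_j$ whose ratios tend to $\infty$, shows $\sup_Z\BL(\Ac,\vw;Z)=\infty$ as well.

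\textbf{Gaussians are admissible (the easy inequality).} First I would test~\eqref{eq:BL-inequ} on centered Gaussians. Fix $Z=(Z_1,\dots,Z_m)\in\prod_j\pd_{d'}$ and set $f_j(x)=\exp(-\pi\ip{Z_jx}{x})$ on $\R^{d'}$, so that $\int_{\R^{d'}}f_j=\det(Z_j)^{-1/2}$. Since $\prod_jf_j(A_jx)^{w_j}=\exp(-\pi\ip{Mx}{x})$ with $M:=\sum_jw_jA_j^\ast Z_jA_j$, and since feasibility of the datum forces $\bigcap_j\ker A_j=\{0\}$ so that $M\succ0$, we get $\int_{\R^d}\prod_jf_j(A_jx)^{w_j}\,dx=\det(M)^{-1/2}$. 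The ratio of the left-hand side to the right-hand side of~\eqref{eq:BL-inequ} thus equals $\bigl(\prod_j\det(Z_j)^{w_j}/\det(M)\bigr)^{1/2}=\BL(\Ac,\vw;Z)$, giving $\BL(\Ac,\vw;Z)\le\BL(\Ac,\vw)$; taking the supremum over $Z$ yields the first inequality. This step is a routine Gaussian-integral computation.

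\textbf{Gaussians are extremal (the hard inequality).} The reverse inequality is Lieb's assertion that the extremizing functions are Gaussian, and I would argue by heat-flow monotonicity. Evolve each $f_j$ to $f_j^{(t)}:=e^{t\Delta}f_j$ under the heat semigroup on $\R^{d'}$; using the monotonicity below, one may first reduce to $f_j$ that are smooth and have Gaussian decay (e.g.\ $f_j=e^{\Delta}g_j$). Mass is conserved, $\int f_j^{(t)}=\int f_j$, so the denominator of the ratio in~\eqref{eq:BL-inequ} does not change with $t$. The key lemma is that $Q(t):=\int_{\R^d}\prod_jf_j^{(t)}(A_jx)^{w_j}\,dx$ is non-decreasing: differentiating under the integral sign, integrating by parts, and invoking a Cauchy--Schwarz inequality weighted by the $w_j$ — the infinitesimal form of the Brascamp--Lieb inequality itself — gives $Q'(t)\ge0$. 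Hence the ratio only grows along the flow, so it suffices to bound its limit as $t\to\infty$. After the rescaling forced by the scaling condition $d'\sum_jw_j=d$ (itself necessary for feasibility), the normalized $f_j^{(t)}$ converge to Gaussian densities with covariances proportional to the identity, so $\lim_{t\to\infty}$ of the ratio equals $\BL(\Ac,\vw;Z)$ for one particular such $Z$, hence is $\le\sup_Z\BL(\Ac,\vw;Z)$. Combined with the monotonicity, the ratio at $t=0$ is $\le\sup_Z\BL(\Ac,\vw;Z)$, and taking the supremum over $f_j$ finishes the proof. (An alternative is Lieb's original tensorization argument: the best constant satisfies $C(\Ac^{\oplus n},\vw)=C(\Ac,\vw)^n$, and the Gaussian value tensorizes the same way, so $n$-fold products of near-optimizers together with a central-limit / symmetrization argument force the extremizer to become Gaussian as $n\to\infty$.)

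\textbf{Main obstacle.} The difficulty lies entirely in the upper bound — establishing that Gaussians are extremal. In the heat-flow route the crux is the monotonicity $Q'(t)\ge0$: it requires a careful justification of differentiation under the integral sign and of the vanishing of boundary terms, the identification of the correct $w_j$-weighted Cauchy--Schwarz estimate, and the integrability and regularity bookkeeping needed both to run the flow and to pass to the rescaled Gaussian limit. The lower bound, by contrast, is immediate once one records that feasibility makes $\sum_jw_jA_j^\ast Z_jA_j$ positive definite.
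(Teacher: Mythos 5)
The paper does not prove this statement at all: it is quoted as a known result of \citet{lieb}, so there is no internal argument to compare yours against; your proposal is an attempt to reprove Lieb's theorem from scratch. Your first step is fine: testing \eqref{eq:BL-inequ} on centered Gaussians $f_j(x)=\exp(-\pi\ip{Z_jx}{x})$ is a routine computation (and your observation that feasibility forces $\bigcap_j\ker A_j=\{0\}$, hence $\sum_j w_jA_j^*Z_jA_j\succ0$, is correct), and it yields $\sup_Z\BL(\Ac,\vw;Z)\le\BL(\Ac,\vw)$.

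The hard direction, however, contains a step that fails as stated. You claim that $Q(t)=\int_{\R^d}\prod_j\bigl(e^{t\Delta}f_j\bigr)(A_jx)^{w_j}\,dx$ is non-decreasing under the \emph{plain} heat semigroup for an arbitrary datum. This is false in general. Take every $f_j$ to be a centered Gaussian $\exp(-\pi\ip{Z_jx}{x})$: then $e^{t\Delta}f_j$ is again a centered Gaussian whose inverse covariance $Z_j(t)$ satisfies $Z_j(t)^{-1}=Z_j^{-1}+4\pi tI$, masses are conserved, and your ratio equals $\BL(\Ac,\vw;Z(t))$ exactly. By the scaling condition $d=d'\sum_jw_j$ this ratio is invariant under $Z\mapsto\lambda Z$, so as $t\to\infty$ it converges to the value at the isotropic direction $Z_j\propto I$. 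For data whose extremizing Gaussians are \emph{not} isotropic --- e.g.\ the sharp Young datum with unequal weights --- starting the flow at an extremal (or near-extremal) Gaussian tuple makes the ratio strictly \emph{decrease} towards the isotropic value, so $Q'(t)\ge0$ cannot hold; the $w_j$-weighted Cauchy--Schwarz computation only closes in the geometric case $A_jA_j^*=I_{d'}$, $\sum_jw_jA_j^*A_j=I_d$. The actual content of the heat-flow proofs (Carlen--Lieb--Loss; Bennett--Carbery--Christ--Tao) is precisely the missing step: either reduce a general extremisable datum to a geometric one by a linear change of variables built from an extremal Gaussian, or run heat flows with matrices $B_j$ extracted from a (near-)extremal Gaussian, and handle non-extremisable data by an additional limiting or structural (critical-subspace) argument. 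Your parenthetical fallback to Lieb's tensorization-plus-rotation argument is a legitimate alternative route, but as written it is a pointer rather than a proof, so the gap in the upper bound remains.
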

\noindent \citep{tao-paper} characterized the feasibility of $(\Ac,\vw)$ as follows:
\begin{theorem}[\citep{tao-paper},Theorem~1.13]\label{thm:feasible}
$(\Ac,\vw)$ is feasible if and only if
\begin{enumerate}
\item $d=\sum_{j \in [m]} w_j d_j$,
\item $\dim(H) \leq \sum_{j \in [m]} w_j \dim(A_j H)$ for any subspace $H \subseteq \R^d$.
\end{enumerate}
\end{theorem}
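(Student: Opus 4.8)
The plan is to pass to the Gaussian reformulation. By Lieb's theorem above, $\BL(\Ac,\vw)$ equals $\sup_{Z}\BL(\Ac,\vw;Z)$ of~\eqref{eq:BL-lieb} over $Z=\{Z_j\}\in\prod_{j}\pd_{d'}$, so it is enough to decide when this supremum is finite. I would treat necessity and sufficiency of conditions~(1) and~(2) separately.

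\emph{Necessity.} Condition~(1) comes from the homogeneity of~\eqref{eq:BL-inequ}: replacing each $f_j$ by $f_j(\lambda\,\cdot)$ multiplies the left-hand side of~\eqref{eq:BL-inequ} by $\lambda^{-d}$ and the right-hand side by $\lambda^{-d'\sum_j w_j}$, so for a finite constant to exist for all $\lambda>0$ we must have $d=d'\sum_j w_j$. Condition~(2) comes from a degenerating Gaussian: given a subspace $H\subseteq\R^d$, test~\eqref{eq:BL-lieb} with $Z_j=\varepsilon I+t\,(I-\Pi_{A_jH})$, where $\Pi_{A_jH}$ is the orthogonal projection of $\R^{d'}$ onto $A_jH$, and let $t\to\infty$. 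Then $\det(Z_j)=\Theta(t^{\,d'-\dim(A_jH)})$, while $\sum_j w_jA_j^\ast Z_jA_j$ is constant on $H$ in $t$ and can grow only in directions transverse to $H$, so that $\det(\sum_j w_jA_j^\ast Z_jA_j)=O(t^{\,d-\dim H})$ (and if it vanishes identically we are done immediately). Combining and using~(1) in the exponent gives $\BL(\Ac,\vw;Z)^2\ge c\,t^{\,\dim H-\sum_j w_j\dim(A_jH)}$, which is unbounded whenever~(2) fails. Hence both conditions are necessary.

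\emph{Sufficiency.} This is the substantive direction, and I would argue by induction on $d$. Call $0\subsetneq H\subsetneq\R^d$ \emph{critical} if~(2) holds with equality for $H$. If $(\Ac,\vw)$ has no critical subspace, the supremum is finite by a coercivity argument: whenever $\prod_j\det(Z_j)^{w_j}$ is kept bounded away from $0$ and $\infty$, strict inequality in~(2) forces $M(Z):=\sum_j w_jA_j^\ast Z_jA_j$ to remain uniformly well-conditioned, so $\BL(\Ac,\vw;Z)$ cannot escape to infinity (equivalently, this is coercivity of the scaling-invariant function $F$ in~\eqref{eq:F} on a cross-section of its scaling orbits). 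If a critical subspace $H$ exists, decompose $(\Ac,\vw)$ into two lower-dimensional data with the same weights $w$: the restriction $\{A_j|_H\colon H\to A_jH\}$ and the quotient $\{\bar A_j\colon\R^d/H\to\R^{d'}/A_jH\}$. Criticality of $H$ is exactly what makes each of these satisfy~(1) again (using the original~(1) together with $\dim H=\sum_j w_j\dim(A_jH)$) and~(2) again (by lifting subspaces through the quotient), and a Fubini-type splitting of the integral over $\R^d$ along $H$ yields the submultiplicative bound $\BL(\Ac,\vw)\le\BL(\mathrm{restriction})\cdot\BL(\mathrm{quotient})$; the inductive hypothesis bounds both factors.

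The main obstacle is the sufficiency direction, and within it two points: the factorization lemma (checking that the restricted and quotient data inherit~(1)--(2) and establishing the submultiplicativity of $\BL$), and the coercivity step in the critical-subspace-free case, i.e.\ converting ``strict inequality in~(2)'' into an explicit lower bound on $F$ (equivalently, into quantitative control on the conditioning of $M(Z)$) so that the maximization is provably confined to a compact region. An alternative route for sufficiency is the heat-flow monotonicity method---flow the $f_j$ under the heat semigroup, show the Brascamp--Lieb quotient is nondecreasing in $t$, and evaluate the $t\to\infty$ limit by the Gaussian computation---but this still reduces to exactly the Gaussian finiteness estimate under~(1)--(2), so the combinatorial core is unchanged.
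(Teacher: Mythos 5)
This theorem is not proved in the paper at all: it is imported verbatim from \citet{tao-paper}, so there is no internal argument to compare against, and your proposal should be judged as a reconstruction of the Bennett--Carbery--Christ--Tao proof. On that score, your necessity half is correct and essentially complete: the scaling argument for condition (1) is the standard one, and your degenerating-Gaussian test $Z_j=\varepsilon I+t(I-\Pi_{A_jH})$ does give $\prod_j\det(Z_j)^{w_j}=\Theta\bigl(t^{\,d-\sum_j w_j\dim(A_jH)}\bigr)$ (using (1)) against $\det\bigl(\sum_j w_jA_j^\ast Z_jA_j\bigr)=O\bigl(t^{\,d-\dim H}\bigr)$, since the $t$-part of $M(Z)$ annihilates $H$ and hence has rank at most $d-\dim H$; so $\BL(\Ac,\vw;Z)$ blows up whenever (2) fails. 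This matches the route in the cited source.

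The sufficiency half, however, is only a roadmap, and the two steps you yourself flag as obstacles are precisely the mathematical content of the theorem, not technicalities: (i) the coercivity claim for simple data --- that strict inequality in (2) for all proper subspaces confines the Gaussian maximization to a region where $M(Z)$ is uniformly well-conditioned after fixing the scaling freedom --- is exactly the compactness result of \citet[Prop.~5.2]{tao-paper} and requires a genuine quantitative argument (the telescoping estimate on $\prod_j\det(Z_j)^{w_j}$ in terms of the eigenvalues of $M(Z)$ that this paper recalls in its Section 7.3); and (ii) the factorization step needs both the verification that restriction and quotient data inherit (1)--(2) (your lifting argument for the quotient is the right idea, via $\dim\bigl((A_jK+A_jH)/A_jH\bigr)\ge\dim(A_jK)-\dim(A_jH)$) and the submultiplicativity $\BL(\Ac,\vw)\le\BL(\mathrm{restriction})\cdot\BL(\mathrm{quotient})$, which rests on a Fubini/fiberwise application of the inductive inequality that you assert but do not carry out. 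So the approach is the correct one (it is the original authors' own), but as written the sufficiency direction is unproven; to close it you would either have to execute these two lemmas or simply cite \citet{tao-paper}, which is what the paper under review does.
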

\noindent Notice that, in particular, Theorem~\ref{thm:feasible}(2) implies that $\bigcap_{j \in [m]} {\rm ker} A_j=\{0\} $ for $j \in [m]$. This result renders the feasibility of Brascamp--Lieb data into a constrained optimization problem: It induces linear constraints on $\vw$ as $H$ varies over different subspaces.  Importantly, since $\dim(A_j H) \leq d$,  the set of constraints is finite. 
Therefore, the feasible set is a convex polytope $\Xc_{(\Ac,\vw)}$ (also known as \emph{Brascamp--Lieb polytope}).  If $\vw$ lies on the boundary of $\Xc_{(\Ac,\vw)}$, then there exists a non-trivial subspace $H$ for which the inequality constraints in Theorem~\ref{thm:feasible} are tight. 
\begin{defn}[Simple Brascamp--Lieb data, \cite{tao-paper}]
\label{def:simple}
Let $(\Ac,\vw)$ feasible. A subspace $H \subseteq \R^d$ is \emph{critical} if
\begin{equation}
\dim(H) = \sum_{j \in [m]} w_j \dim(A_j H) \; .
\end{equation}
We say that $(\Ac,\vw)$ is \emph{simple} if there exists no non-trivial, critical subspace.
\end{defn}
Geometrically, simple Brascamp--Lieb data lie in the interior of $\Xc_{(\Ac,\vw)}$.  Importantly, Problem~\eqref{eq:BL-const} has a unique global solution for simple data.
\begin{theorem}\label{thm:maximizer}
Let $(\Ac,\vw)$ as before and $X := \sum_{j \in [m]} w_j A_j^* Z_j A_j$. Then, the following statements are equivalent:
\begin{enumerate}
\item $Z$ is a global maximizer for $\BL(\Ac,\vw,Z)$;
\item $Z$ is a local maximizer for $\BL(\Ac,\vw,Z)$;
\item $X$ is invertible and $Z_j^{-1} = A_j X^{-1} A_j^*$.
\end{enumerate}
\end{theorem}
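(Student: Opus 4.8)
The plan is to establish the cycle of implications $(1)\Rightarrow(2)\Rightarrow(3)\Rightarrow(1)$, where the only substantive work lies in $(2)\Rightarrow(3)$ (first-order optimality) and $(3)\Rightarrow(1)$ (upgrading a critical point to a global maximizer via concavity along geodesics). The implication $(1)\Rightarrow(2)$ is immediate since every global maximizer is a fortiori a local maximizer. For $(2)\Rightarrow(3)$, I would pass to the logarithm: maximizing $\BL(\Ac,\vw;Z)$ is equivalent to maximizing
\begin{align*}
\Phi(Z) := \tfrac12\Bigl(\sum_{j\in[m]} w_j\log\det Z_j - \log\det\bigl(\textstyle\sum_{j} w_j A_j^* Z_j A_j\bigr)\Bigr)
\end{align*}
over the open set $\prod_j \pd_{d'}$. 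At a local maximizer the differential must vanish in every direction. Using $\mathrm{d}\log\det(W)[H] = \trace(W^{-1}H)$ and the chain rule, the directional derivative of $\Phi$ in direction $(H_1,\dots,H_m)$ with $H_j \in \H_{d'}$ is
\begin{align*}
\tfrac12\sum_{j}\Bigl(w_j\trace(Z_j^{-1}H_j) - w_j\trace\bigl(X^{-1}A_j^* H_j A_j\bigr)\Bigr) = \tfrac12\sum_j w_j\,\trace\bigl((Z_j^{-1} - A_j X^{-1}A_j^*)H_j\bigr),
\end{align*}
where $X=\sum_j w_j A_j^* Z_j A_j$; here I would first note that a local maximum exists with $Z_j\succ0$ and $X$ invertible, so that $X^{-1}$ is well-defined (invertibility of $X$ at any point of the domain follows because each $A_j$ is surjective and $w_j>0$, so $X\succ0$ automatically). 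Since $w_j>0$ and $H_j$ ranges over all Hermitian matrices, vanishing of this form forces $Z_j^{-1} = A_j X^{-1}A_j^*$ for each $j$, which is exactly~(3).

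The implication $(3)\Rightarrow(1)$ is where I expect the real content to sit. The point is that $\Phi$, while not concave in the Euclidean sense, is \emph{geodesically concave} on the product of Cartan--Hadamard manifolds $\prod_j \pd_{d'}$ (equivalently, $-\Phi$ corresponds to $F$ in~\eqref{eq:F} after the change of variables discussed in the paper, which is geodesically convex). Concretely, $\log\det$ is geodesically linear (affine) along the geodesics $t\mapsto Z_j^{1/2}(Z_j^{-1/2}Y_j Z_j^{-1/2})^t Z_j^{1/2}$, and $Z\mapsto -\log\det(\sum_j w_j A_j^* Z_j A_j)$ is geodesically convex because the pullback maps $Z_j\mapsto A_j^* Z_j A_j$ are geodesically affine into $\pd_d$ and $-\log\det$ is operator-geodesically-convex there; summing preserves this. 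Hence $\Phi$ is geodesically concave, and a point satisfying the first-order condition~(3) — which is precisely $\mathrm{d}\Phi=0$ — is a global maximizer. I would also invoke the theorem of~\citet{lieb} cited above to guarantee that the supremum~\eqref{eq:BL-sup} is actually attained, so that $(3)$ is nonvacuous; alternatively, strict geodesic concavity for simple data (Definition~\ref{def:simple}) gives uniqueness of the maximizer as a bonus, matching the remark preceding the theorem.

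The main obstacle is making the geodesic-concavity argument airtight without circularity: the paper \emph{asserts} that~\eqref{eq:F} is geodesically convex (citing~\citet{vishnoi}), so I would either cite that directly and translate, or give the short self-contained argument above that $\log\det\circ(A_j^*\cdot A_j)$ behaves affinely/convexly along $\pd_{d'}$-geodesics. A secondary technical point is the interchange of "local in the Euclidean topology" with "critical along geodesics": since the Riemannian exponential map on $\pd_{d'}$ is a diffeomorphism, a Euclidean-local maximizer is a maximizer along every geodesic through it, so its geodesic gradient vanishes, closing the loop. Everything else is routine matrix calculus.
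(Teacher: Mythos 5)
The paper itself states Theorem~\ref{thm:maximizer} as background and gives no proof (it is imported from the literature on geodesically convex formulations of the BL constant, cf.\ \citet{vishnoi}), so your proposal can only be judged against that standard argument. Your architecture is the right one and largely matches it: $(1)\Rightarrow(2)$ is trivial, the first-order computation in $(2)\Rightarrow(3)$ is correct (the differential $\tfrac12\sum_j w_j\trace\bigl((Z_j^{-1}-A_jX^{-1}A_j^*)H_j\bigr)$ must vanish for all Hermitian $H_j$, forcing $Z_j^{-1}=A_jX^{-1}A_j^*$), and $(3)\Rightarrow(1)$ should indeed come from joint geodesic concavity of $\log\BL(\Ac,\vw;\cdot)$ on $\prod_j\pd_{d'}$, so that critical points are global maximizers.

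However, the two lemmas you lean on in the substantive step are wrong as stated. First, the map $Z_j\mapsto A_j^*Z_jA_j$ does \emph{not} land in $\pd_d$ when $d'<d$ (its image has rank $d'$), and congruence by a non-square or non-invertible matrix is \emph{not} geodesically affine; so "the pullback maps are geodesically affine into $\pd_d$" cannot carry the concavity claim. The correct route is via Ando-type inequalities: for the weighted geometric mean one has $A_j^*(Z_j\#_t W_j)A_j\preceq (A_j^*Z_jA_j)\#_t(A_j^*W_jA_j)$ (positive linear maps are subadditive over $\#_t$) together with joint concavity of $\#_t$, so $\sum_j w_jA_j^*(Z_j\#_tW_j)A_j\preceq\bigl(\sum_j w_jA_j^*Z_jA_j\bigr)\#_t\bigl(\sum_j w_jA_j^*W_jA_j\bigr)$, and then monotonicity of $\log\det$ plus its additivity along geometric means give joint g-convexity of $Z\mapsto\log\det\bigl(\sum_j w_jA_j^*Z_jA_j\bigr)$, hence g-concavity of $\log\BL$; your fallback of citing g-convexity of $F$ in the $X$-variable would also need an explicit translation between the $Z$- and $X$-formulations, which you do not supply. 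Second, "$X\succ0$ automatically because each $A_j$ is surjective" is false for $d'<d$: each summand $A_j^*Z_jA_j$ is singular, and positive definiteness of the sum requires $\bigcap_j\ker A_j=\lbrace 0\rbrace$, which follows from feasibility via Theorem~\ref{thm:feasible}(2) applied to $H=\bigcap_j\ker A_j$, not from surjectivity alone. Both defects are repairable, but as written the key implication $(3)\Rightarrow(1)$ rests on an incorrect geodesic-affinity claim, so the proof is not yet complete.
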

\noindent Theorem~\ref{thm:maximizer} implies that the feasibility of a BL datum can be characterized by maximizing $\BL(\Ac,\vw,Z)$.  Moreover,  Theorem~\ref{thm:maximizer}(3) allows for rewriting~\eqref{eq:BL-sup} with respect to $X$, which results in the formulation that we will analyze in this work (Proposition~\ref{prop:BL-min}).

\subsubsection{Examples}
Many well-known inequalities in mathematics can be written as Brascamp--Lieb inequalities. In this section, we recall a few key examples.  A more comprehensive list of examples can be found in~\citep{tao-paper}.
\begin{example}[H{\"o}lder]
Consider $A_j = {\rm id}_{\R^d}$ for all $j$, i.e., $\Ac = ({\rm id}_{\R^d} )_{1 \leq j \leq m}$. Then the multilinear \emph{H{\"o}lder inequality} asserts that $\BL(\Ac,\vw)=1$ if $\sum_{j \in [m]} w_j = 1$ and $\BL(\Ac,\vw)=+\infty$ otherwise.
\end{example}
\begin{example}[Sharp Young inequality]
Consider $A_j: \R^d \times \R^d \rightarrow \R^d$ for $j=1,2,3$ with
\begin{equation*}
\begin{cases}
A_1(x,y) := x \\
A_2(x,y) := y \\
A_3(x,y) := x-y.
\end{cases}
\end{equation*}
Then the \emph{Sharp Young inequality} asserts that
\begin{equation*}
\BL(\Ac,\vw) = \Big( \prod_{j=1}^3 \frac{(1-w_j)^{1-w_j}}{w_j^{w_j}}	\Big)^{d/2} \; ,
\end{equation*}
if $\sum_{j=1}^3 w_j =2$ and $\BL(\Ac,\vw)=+\infty$ otherwise.
\end{example}
\citep{barthe} introduced a class of \emph{Reverse Brascamp--Lieb inequalities} that generalize several important inequalities that cannot be encoded with the original Brascamp--Lieb framework.  A key example is the \emph{Gaussian Brunn--Minkowski inequality}~\citep{barthe2008gaussian}.  Importantly, since Reverse Brascamp--Lieb inequalities have the same data as the standard Brascamp--Lieb inequalities,  there exists an intrinsic relation between their constants: The optimal constants of standard and Reverse Brascamp--Lieb inequalities multiply to 1 if the datum is feasible. Infeasible data have Brascamp--Lieb constants of $+\infty$ and Reverse Brascamp--Lieb constants of $0$~\citep{barthe}. This connection  ensures that the algorithmic results presented here and in the related work translate from standard to Reverse Brascamp--Lieb inequalities.

\section{Overview of our results}\label{sec:overview}
We start with a derivation of the map~\eqref{eq:map} with the goal of establishing that its Picard iteration solves problem~\eqref{eq:F}. We start by computing the gradient of $F$:
\begin{align}
\nabla F(X) = \nlsum_{j \in [m]} w_j A_j (	A_j^\ast X A_j 	)^{-1} A_j^\ast - X^{-1} \; .
\end{align}
Since the constraint set $\pd_d$ is open, $\nabla F(X)=0$ is necessary for $X$ to be a local minimum. Importantly, since $F$ is geodesically convex\footnote{For a comprehensive introduction to geodesically convex optimization, see~\citep{udriste1994convex}.}, this local minimum must be global. Thus, to solve~\eqref{eq:F}, it suffices to solve the equation
$\nabla F(X)=0$, or equivalently $\nlsum_{j \in [m]} w_j A_j (	A_j^\ast X A_j )^{-1} A_j^\ast - X^{-1} = 0$,
which can be rewritten as the nonlinear matrix equation
\begin{equation}
    \label{eq:pre1}
    X = \bigl( \nlsum_{j} w_j A_j (	A_j^\ast X A_j 	)^{-1} A_j^\ast \bigr)^{-1}.
\end{equation}
Once written as~\eqref{eq:pre1}, it is natural to consider the following iterative map:
\begin{equation*}
G: \quad X \mapsto \Big(\nlsum_{j \in [m]} w_j A_j \big(	A_j^\ast X A_j 	\big)^{-1} A_j^\ast \Big)^{-1} \; .
\end{equation*}
Remarkably, this na\"ive consideration turns out to be a meaningful algorithm. That is, we can show that for simple input data, the sequence of iterates $(X_k)_{k>0}$ produced by iterating~\eqref{eq:map} has a fixed point under the usual assumptions.  A crucial ingredient of our analysis is the observation that the solution of Problem~\eqref{eq:F} is contained in a compact convex set of the form
\begin{equation}\label{eq:def-D}
D_{\delta, \Delta} := \big\{X \in \pd_d: \; \delta I_d	\preceq X  \preceq \Delta I_d \big\} \; ,
\end{equation}
where $\delta, \Delta >0$ are parameters that depend only on the input datum $(\Ac, \vw)$. 
The upper and lower bounds on the solution are implied by~\citep[Proposition~5.2]{tao-paper}.

Our convergence analysis will utilize the ``average map'' $G_t$ of $G$, which is obtained by setting
\begin{equation}\label{eq:G-avg}
    G_t := (1-t)I + tG \; , \qquad t \in (0,1) \; .
\end{equation}
\noindent Importantly, we will establish that $G_t$ shares the fixed point set of $G$. Formally, we will show the following convergence result:
\begin{theorem}\label{thm:conv-G1}
There exist $\delta, \Delta \in (0,\infty)$ that depend only on the BL data $(\Ac,\vw)$ such that $\delta < \Delta$, and for every $X_0 \in D_{\delta, \Delta}$ the sequence $(X_k)_{k \in \mathbb{N}}$ generated by iterating $G_t$, i.e., $X_{k+1}=G_t(X_k)$, converges to a solution of~\eqref{eq:F}.
\end{theorem}
\noindent The proof of this key result relies on Theorem~\ref{thm:s-h}. We have to show the three conditions of the theorem: (1) that $G_t$ has a fixed point in $D_{\delta,\Delta}$, (2) that $G_t$ is non-expansive on $D_{\delta,\Delta}$, and (3) that $G_t$ is asymptotically regular.  
\noindent Proofs for all three conditions will be given in Section~\ref{sec:convergence-G}. 
The existence of a suitable set $D_{\delta, \Delta}$ and its dependency on the input datum $(\Ac,\vw)$ via $\delta$ and $\Delta$ will be discussed in Section~\ref{sec:delta}.

However, to establish stronger, \emph{non-asymptotic} convergence guarantees, \emph{strict} contractivity (see Definition~\ref{def:contract}(2)) of the 
iterative map is typically required. Unfortunately, $G$ (and $G_t$) only fulfills the weaker notion of non-expansivity (Definition~\ref{def:contract}(1)). Therefore, we cannot hope to develop non-asymptotic convergence guarantees with the tools proposed so far. To mitigate this problem, we introduce a regularization of $G$, which instead of minimizing $F$ directly, minimizes a perturbed objective. Consider thus the regularized problem
\begin{equation}\label{eq:F-regul}
\min_{X \in \pd_d} \; F_{\mu} (X) := F(X) + \mu \trace (X) \; ,
\end{equation}
for some $\mu > 0$. By analyzing the gradient of $F_{\mu}$, we can derive an iterative procedure for solving Problem~\eqref{eq:F-regul} as
\begin{align*}
\nabla F_\mu (X) &= \nabla F(X) + \mu I = 0, \\
X^{-1} &= \mu I + \nlsum_{j \in [m]} w_j A_j (A_j^\ast X A_j)^{-1} A_j^\ast \\
X &= \bigl(\mu I + \nlsum_{j \in [m]} w_j A_j (A_j^\ast X A_j)^{-1} A_j^\ast \bigr)^{-1} \; ,
\end{align*}
using which we can define the regularized map
\begin{equation*}\label{eq:G2}
G_\mu: \quad X \mapsto \Big(\mu I + \nlsum_{j \in [m]} w_j A_j \big(	A_j^\ast X A_j 	\big)^{-1} A_j^\ast \Big)^{-1} \; .
\end{equation*}
By lemma~\ref{lem:G4-contract}, which is proved in section~\ref{sec:convergence-G-reg}, the regularized map $G_\mu$ is indeed contractive:
\begin{lem}\label{lem:G4-contract}
For any $\mu>0$, the map $G_\mu$ is contractive on $D_{\delta,\Delta}$, i.e.,
\begin{equation*}
    \delta_{\rm T}(G_\mu(X), G_\mu(Y))< \delta_{\rm T}(X,Y)
\end{equation*}
for all $X,Y \in D_{\delta,\Delta}$.
\end{lem}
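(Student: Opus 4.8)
The plan is to write $G_\mu(X)=\bigl(\mu I+\Phi(X)\bigr)^{-1}$ with $\Phi(X):=\nlsum_j w_j A_j\bigl(A_j^\ast X A_j\bigr)^{-1}A_j^\ast$, and to use the fact that inversion is a $\delta_T$-isometry (Lemma~\ref{lem:prop-dT}(1)) to pass to the \emph{additive} form $\delta_T\bigl(\mu I+\Phi(X),\,\mu I+\Phi(Y)\bigr)$, where the gain coming from a fixed positive-definite shift is exactly what Lemma~\ref{lem:prop-dT}(6) quantifies. Concretely, I would (i) record $\delta_T\bigl(G_\mu(X),G_\mu(Y)\bigr)=\delta_T\bigl(\mu I+\Phi(X),\mu I+\Phi(Y)\bigr)$; (ii) apply Lemma~\ref{lem:prop-dT}(6) with shift $A=\mu I$, for which $\lambda_{\min}(A)=\mu$ and $\alpha=\norm{\Phi(X)}\vee\norm{\Phi(Y)}$, to obtain $\delta_T\bigl(G_\mu(X),G_\mu(Y)\bigr)\le\frac{\alpha}{\alpha+\mu}\,\delta_T\bigl(\Phi(X),\Phi(Y)\bigr)$; and (iii) invoke non-expansivity of $\Phi$ (the same computation that underlies non-expansivity of $G=\Phi(\cdot)^{-1}$) to bound $\delta_T\bigl(\Phi(X),\Phi(Y)\bigr)\le\delta_T(X,Y)$. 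Since $\mu>0$ and $\alpha<\infty$, the prefactor $\alpha/(\alpha+\mu)$ is strictly below $1$, which is precisely the asserted strict contraction.

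For step (iii) I would prove non-expansivity of $\Phi$ by tracking Löwner-order inequalities. Fix $X,Y\in D_{\delta,\Delta}$ and set $t=\delta_T(X,Y)$. For each $j$, since $A_j$ has full column rank, $A_j^\ast X A_j\in\pd_{d'}$, and Lemma~\ref{lem:prop-dT}(3) gives $\delta_T(A_j^\ast X A_j,A_j^\ast Y A_j)\le t$; Lemma~\ref{lem:prop-dT}(1) then yields $e^{-t}(A_j^\ast Y A_j)^{-1}\preceq(A_j^\ast X A_j)^{-1}\preceq e^{t}(A_j^\ast Y A_j)^{-1}$. Conjugating by $A_j^\ast$ preserves $\preceq$, and the $w_j$-weighted sum (all $w_j>0$) gives $e^{-t}\Phi(Y)\preceq\Phi(X)\preceq e^{t}\Phi(Y)$; since feasibility of $(\Ac,\vw)$ forces $\Phi(X),\Phi(Y)\succ0$, this is exactly $\delta_T(\Phi(X),\Phi(Y))\le t$. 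One point that needs care is that one cannot apply Lemma~\ref{lem:prop-dT}(5) termwise, because the individual summands $A_j(A_j^\ast X A_j)^{-1}A_j^\ast$ are only positive semidefinite; performing the conjugation-by-$A_j^\ast$ and the summation together, at the level of $\preceq$, sidesteps this.

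It then remains to check that $\alpha=\norm{\Phi(X)}\vee\norm{\Phi(Y)}$ is finite and positive on $D_{\delta,\Delta}$, so that $\alpha/(\alpha+\mu)<1$ is a genuine contraction factor for every $\mu>0$: from $X\succeq\delta I$ we get $A_j^\ast X A_j\succeq\delta\,A_j^\ast A_j$, hence $(A_j^\ast X A_j)^{-1}\preceq\delta^{-1}(A_j^\ast A_j)^{-1}$, and so $\alpha\le\bar\alpha:=\delta^{-1}\nlsum_j w_j\norm{A_j(A_j^\ast A_j)^{-1}A_j^\ast}$, a constant depending only on $(\Ac,\vw)$ and $\delta$; and $\alpha>0$ since $\Phi(X)\succ0$. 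This in fact yields a \emph{uniform} factor $\bar\alpha/(\bar\alpha+\mu)<1$, which will be convenient for the later rate analysis. I do not expect a genuine obstacle here: Lemma~\ref{lem:prop-dT}(6) does the real work, and the only delicate bookkeeping is the reduction to additive form via inversion-invariance and the treatment of the rank-deficient summands in step (iii).
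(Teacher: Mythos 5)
Your proposal is correct and follows essentially the same route as the paper: pass to the additive form $\mu I+\Phi(\cdot)$ via inversion-invariance (Lemma~\ref{lem:prop-dT}(1)), apply the shift bound of Lemma~\ref{lem:prop-dT}(6) with $\beta=\mu$, and finish with non-expansivity of the inner map, which is exactly the paper's chain using $\delta_T(G^{-1}(X),G^{-1}(Y))=\delta_T(G(X),G(Y))\le\delta_T(X,Y)$. Your only departures are cosmetic: you re-derive the non-expansivity of $\Phi$ directly at the L\"owner-order level instead of citing Lemma~\ref{lem:nonexp} together with Lemma~\ref{lem:prop-dT}(1), and you bound the factor $\alpha$ by an explicit datum-dependent constant rather than invoking Lemma~\ref{lem:G-low-bound} as the paper does.
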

\noindent Finally, we analyze the convergence rate of the proposed fixed-point approach.
The central result of the paper gives the following non-asymptotic convergence guarantee:
\begin{theorem}\label{thm:conv-non-asymp}
For a suitable choice of $\mu$, $G_{\mu}$ computes an $\epsilon$-approximate solution to problem~\eqref{eq:F}, i.e., a constant $C$, such that 
\begin{equation*}
    \BL(\Ac,\vw) \leq C \leq (1+\epsilon) \BL(\Ac,\vw) \; ,
\end{equation*}
with iteration complexity $O \Big( \log \big( \frac{1}{\epsilon},\frac{\Delta}{\delta^{3/2}} ,\sqrt{d}, \frac{1}{R}\big) \Big)$, where $d, \delta, \Delta, R$ depend on the input datum $(\Ac,\vw)$ only.
\end{theorem}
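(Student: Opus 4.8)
\textbf{Proof plan for Theorem~\ref{thm:conv-non-asymp}.}

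The plan is to combine the strict contractivity of $G_\mu$ (Lemma~\ref{lem:G4-contract}) with a quantitative version of the Banach fixed-point theorem in the Thompson metric, and then control the bias introduced by the regularization parameter $\mu$. First I would make Lemma~\ref{lem:G4-contract} quantitative: rather than merely $\delta_T(G_\mu(X),G_\mu(Y)) < \delta_T(X,Y)$, I would extract an explicit contraction factor $c_\mu \in (0,1)$ valid on $D_{\delta,\Delta}$. The natural route is property~6 of Lemma~\ref{lem:prop-dT}: writing $G_\mu(X)^{-1} = \mu I + \sum_j w_j A_j(A_j^\ast X A_j)^{-1}A_j^\ast$, the additive term $\mu I$ is positive definite with $\lambda_{\min} = \mu$, while the sum $S(X) := \sum_j w_j A_j(A_j^\ast X A_j)^{-1}A_j^\ast$ has operator norm bounded on $D_{\delta,\Delta}$ by some $\alpha = \alpha(\Ac,\vw,\delta)$ (since $X\succeq \delta I$ forces $A_j^\ast X A_j \succeq \delta\,\sigma_{\min}(A_j)^2 I$, hence its inverse is bounded). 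Combining property~6 with the inversion-invariance (property~1) and the sub-max property for sums (property~5) gives
\begin{equation*}
\delta_T(G_\mu(X),G_\mu(Y)) \;\le\; \frac{\alpha}{\alpha+\mu}\,\delta_T(X,Y),
\end{equation*}
so $c_\mu = \alpha/(\alpha+\mu)$. I would also check that $G_\mu$ maps $D_{\delta,\Delta}$ into itself (adjusting the bounds if necessary, or passing to a slightly larger invariant box), so that iteration stays in the region where the contraction estimate holds; then $G_\mu$ has a unique fixed point $X_\mu^\ast$ in that box and $\delta_T(X_k, X_\mu^\ast) \le c_\mu^k\,\delta_T(X_0,X_\mu^\ast)$ with $\delta_T(X_0,X_\mu^\ast) = O(\log(\Delta/\delta))$.

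Next I would bound the bias $\|X_\mu^\ast - X^\ast\|$ (equivalently $\delta_T(X_\mu^\ast,X^\ast)$) where $X^\ast$ solves the unregularized~\eqref{eq:F}. Since $X^\ast$ solves $X^{-1} = S(X)$ and $X_\mu^\ast$ solves $X^{-1} = \mu I + S(X)$, subtracting gives $(X_\mu^\ast)^{-1} - (X^\ast)^{-1} = \mu I + (S(X_\mu^\ast) - S(X^\ast))$; using that $S$ is itself non-expansive-type / Lipschitz on $D_{\delta,\Delta}$ (each summand is, via properties~1 and~3 of Lemma~\ref{lem:prop-dT}, or via an explicit derivative bound), one closes a perturbation estimate of the form $\delta_T(X_\mu^\ast,X^\ast) = O(\mu/\delta)$ — essentially, a resolvent-style bound using that the spectrum of $X^\ast$ is bounded below by $\delta$. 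Then I would translate distance in $X$ into error in the objective value and ultimately in $\BL$: $F$ is smooth on $D_{\delta,\Delta}$ with gradient/Hessian bounds depending on $d,\delta,\Delta$ and the singular values of the $A_j$, and since $\nabla F(X^\ast)=0$, a second-order Taylor estimate gives $F(X) - F(X^\ast) = O(L\,\delta_T(X,X^\ast)^2)$ with $L = L(\Ac,\vw,\delta,\Delta,d)$. Because $\BL(\Ac,\vw) = e^{F(X^\ast)/2}$ (up to the precise normalization of~\eqref{eq:F}–\eqref{eq:BL-const}), the multiplicative guarantee $C \le (1+\epsilon)\BL$ reduces to an additive guarantee $F(X_k) - F(X^\ast) \le 2\log(1+\epsilon) = \Theta(\epsilon)$ for small $\epsilon$; here the quantity $R$ enters as the scale controlling how $\epsilon$-in-$\BL$ converts to $\epsilon$-in-$F$ and how much of the budget the bias is allowed to consume.

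Finally I would set the parameters. Choose $\mu = \Theta(\sqrt{\epsilon}\cdot\text{poly}(\delta,\Delta,d,R,\Delta^{-1},\dots))$ small enough that the bias contributes at most half the target error in $F$, i.e. $\delta_T(X_\mu^\ast,X^\ast) = O(\sqrt{\epsilon}/\sqrt L)$; then run $G_\mu$ for $k$ steps so that $c_\mu^k \,\delta_T(X_0,X_\mu^\ast) \le O(\sqrt{\epsilon}/\sqrt L)$ too, which requires $k = O\!\big(\frac{1}{\log(1/c_\mu)}\log\frac{\delta_T(X_0,X_\mu^\ast)}{\sqrt{\epsilon}/\sqrt L}\big)$. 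With $c_\mu = \alpha/(\alpha+\mu)$ we have $\log(1/c_\mu) \asymp \mu/\alpha$, and substituting the chosen $\mu$ and the explicit forms of $\alpha, L, \delta_T(X_0,X_\mu^\ast)$ in terms of $d,\delta,\Delta,R$ collapses to the claimed complexity $O\!\big(\log(\tfrac1\epsilon,\tfrac{\Delta}{\delta^{3/2}},\sqrt d,\tfrac1R)\big)$ (reading the displayed expression as a polylogarithmic dependence on all listed quantities; the $\Delta/\delta^{3/2}$ grouping is exactly what emerges from multiplying the contraction-rate factor $1/\mu \sim 1/(\sqrt\epsilon\,\delta^{?})$ against the initial-distance and smoothness factors). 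The main obstacle I expect is the quantitative bias bound $\delta_T(X_\mu^\ast,X^\ast) = O(\mu/\delta)$ together with ensuring $G_\mu$-invariance of a fixed box uniformly in $\mu$ as $\mu\to 0$: the regularized solution drifts, and one must verify that the lower bound $\delta$ (from~\citep[Proposition~5.2]{tao-paper}) degrades only mildly, so that $\alpha$ and $L$ stay bounded and the contraction factor's dependence on $\mu$ is clean; getting the exact powers of $\delta,\Delta$ in the final bound right hinges on this step.
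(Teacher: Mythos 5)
Your overall architecture matches the paper's: contraction factor $\frac{\gamma}{\gamma+\mu}$ from Lemma~\ref{lem:prop-dT}(6), geometric decay of $\delta_T(X_k,\cdot)$, initial distance bounded by ${\rm diam}(D_{\delta,\Delta})=\log\frac{\Delta}{\delta}$, and smoothness of $F_\mu$ (the paper's $L=\frac{2}{\delta}$, Lemma~\ref{lem:F-smooth}) to pass from distance to function value, with Snyder's inequality playing the role of your Thompson-to-norm conversion. But there is a genuine gap at the step you yourself flag as the main obstacle: the bias bound $\delta_T(X_\mu^\ast,X^\ast)=O(\mu/\delta)$ does not follow from the contraction-plus-perturbation argument you sketch. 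Writing $\delta_T(X_\mu^\ast,X^\ast)\le \delta_T(G_\mu(X_\mu^\ast),G_\mu(X^\ast))+\delta_T(G_\mu(X^\ast),G(X^\ast))\le c_\mu\,\delta_T(X_\mu^\ast,X^\ast)+O(\mu R)$ gives $\delta_T(X_\mu^\ast,X^\ast)\le \frac{O(\mu R)}{1-c_\mu}$, and since $1-c_\mu=\frac{\mu}{\alpha+\mu}\asymp\mu$, the $\mu$'s cancel: the bound is $O(\alpha R)$, which does not shrink as $\mu\to 0$. Closing this would require extra structure (e.g.\ quantitative strong convexity of $F$ at $X^\ast$), which you do not supply. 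The paper avoids comparing the two fixed points altogether: it splits $|F(X_k)-F(X^\ast)|$ into the regularization error $|F-F_\mu|=\mu\trace(X)$, bounded at a fixed point by $\frac{\mu d}{\mu+1/R}$ using the boundedness constant $R$ (Corollary~\ref{lem:bounded}), plus the convergence of $F_\mu$ along the iterates, bounded via the gradient-norm constant $C_1$, Cauchy--Schwarz, and Snyder's theorem; $\mu$ is then set to $\frac{\epsilon'}{2R(d-\epsilon'/2)}$. Your $R$ enters in a different (and unmotivated) place, as a scale for converting $\BL$-accuracy to $F$-accuracy, whereas in the paper it enters through $\nu_{\min}\ge \frac{1}{R}$ and $\tilde\gamma\ge\frac{1}{R}$.

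A second problem is your final accounting of the $\epsilon$-dependence. You correctly note $\log(1/c_\mu)\asymp \mu/\alpha$, but with your choice $\mu=\Theta(\sqrt{\epsilon})$ this makes the iteration count $k\asymp \frac{\alpha}{\mu}\log(\cdot)$, i.e.\ polynomial in $1/\sqrt{\epsilon}$, not the claimed $O\bigl(\log\frac{1}{\epsilon}\bigr)$; the assertion that the substitution ``collapses to the claimed complexity'' is not justified by the computation you describe. (The paper keeps the denominator $\log(1/\alpha)$ explicit and argues its contribution is dominated by the other constants; whatever one thinks of that step, your version makes the problematic $\mu$-dependence explicit and then waves it away.) So as written, the proposal neither establishes the bias control nor the stated logarithmic dependence on $1/\epsilon$, and it would need the paper's trace-based treatment of the approximation error (or a genuinely new argument) to be repaired.
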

The proof of Theorem~\ref{thm:conv-non-asymp} relies on a convergence analysis of the regularized map $G_\mu$; 
a suitable choice of the parameter $\mu$ is discussed in section~\ref{sec:convergence-G-reg}. The latter relies on the smoothness of $F_\mu$, which is characterized by the following lemma:
\begin{lem}\label{lem:F-smooth}
${\rm Lip}(\nabla F_\mu) \leq \frac{2}{\delta}$, i.e., $\norm{\nabla F_\mu(X) - \nabla F_{\mu}(Y)} \leq \frac{2}{\delta} \norm{X-Y}$ for all $X, Y \in \pd_d$.
\end{lem}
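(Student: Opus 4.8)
\textbf{Proof proposal for Lemma~\ref{lem:F-smooth}.}

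The plan is to bound the operator-norm Lipschitz constant of $\nabla F_\mu$ by estimating the Lipschitz constants of its two constituent pieces: the term $\nlsum_j w_j A_j (A_j^\ast X A_j)^{-1} A_j^\ast$ and the term $-X^{-1}$; note that the regularization term $\mu\trace(X)$ contributes the constant $\mu I$ to the gradient and hence nothing to the Lipschitz constant. Since $X$ ranges over $D_{\delta,\Delta}$, we have $X \succeq \delta I$, and this lower bound is what drives the estimate. First I would handle the simpler term: for $X,Y \succeq \delta I$, the identity $X^{-1}-Y^{-1} = X^{-1}(Y-X)Y^{-1}$ gives $\norm{X^{-1}-Y^{-1}} \le \norm{X^{-1}}\norm{Y^{-1}}\norm{X-Y} \le \delta^{-2}\norm{X-Y}$, so $\mathrm{Lip}(X\mapsto X^{-1}) \le 1/\delta^2$ on $D_{\delta,\Delta}$.

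Next I would treat the sum term. For each $j$, write $B_j(X) := A_j^\ast X A_j$. Since $A_j$ is surjective (full rank $d'$) and $X \succeq \delta I$, we get $B_j(X) \succeq \delta A_j^\ast A_j \succeq \delta\,\sigma_{\min}(A_j)^2 I$, so $B_j(X)$ is invertible with a norm bound on its inverse. Using again the resolvent identity $B_j(X)^{-1}-B_j(Y)^{-1} = B_j(X)^{-1}\,A_j^\ast(Y-X)A_j\,B_j(Y)^{-1}$ and submultiplicativity, one gets $\norm{A_j B_j(X)^{-1}A_j^\ast - A_j B_j(Y)^{-1}A_j^\ast} \le \norm{A_j}^2 \norm{B_j(X)^{-1}}\norm{B_j(Y)^{-1}}\norm{A_j}^2\norm{X-Y}$. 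The key simplification is that $\norm{A_j B_j(X)^{-1} A_j^\ast}$ itself is at most $1/\delta$: indeed $A_j(A_j^\ast X A_j)^{-1}A_j^\ast \preceq \tfrac1\delta A_j(A_j^\ast A_j)^{-1}A_j^\ast \preceq \tfrac1\delta I$ because $A_j(A_j^\ast A_j)^{-1}A_j^\ast$ is an orthogonal projection. This lets one bound the $j$-th difference by $\tfrac{1}{\delta^2}\norm{X-Y}$ after absorbing the $A_j$ factors correctly, and since the $w_j$ are nonnegative and sum to $1$, the convexity/averaging over $j$ keeps the constant at $1/\delta^2$ rather than multiplying it by $m$. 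Adding the two pieces yields $\mathrm{Lip}(\nabla F_\mu) \le 1/\delta^2 + 1/\delta^2 = 2/\delta^2$.

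I should flag a discrepancy: the stated bound in the lemma is $2/\delta$, not $2/\delta^2$. Reconciling this requires knowing the normalization of the $A_j$ used elsewhere in the paper — presumably $\delta$ has been chosen so that the relevant projection/resolvent bounds come out as $1/\delta$ (for instance if one works with the pushed-forward quantities $A_j X^{-1} A_j^\ast$ whose natural scale is $\delta$ rather than $\delta^{-1}$, or if the lemma is really stated for $X \succeq \sqrt\delta\, I$). So the cleanest write-up proceeds exactly as above, and then invokes whatever bound on $\norm{A_j(A_j^\ast X A_j)^{-1}A_j^\ast}$ and $\norm{X^{-1}}$ is consistent with the paper's convention for $D_{\delta,\Delta}$, collecting the two contributions into the claimed $2/\delta$.

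\textbf{Main obstacle.} The genuine mathematical content is the observation that $A_j(A_j^\ast X A_j)^{-1}A_j^\ast$ is controlled by $\norm{X^{-1}}$ independently of the conditioning of $A_j$ (via the orthogonal-projection bound $A_j(A_j^\ast A_j)^{-1}A_j^\ast \preceq I$), together with the fact that the convex combination $\sum_j w_j(\cdot)$ does not inflate the Lipschitz constant; the remaining steps are routine resolvent-identity manipulations. The only delicate bookkeeping point is matching the exponent of $\delta$ to the normalization fixed in Section~\ref{sec:delta}, which I would verify against the definition of $\delta$ there before finalizing the constant.
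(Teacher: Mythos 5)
Your proof is essentially correct, but it takes a different route from the paper and lands on a different constant, and both points are worth spelling out. The paper does not argue via difference/resolvent identities; it differentiates once more, writing $\nabla F_\mu = H_1 + H_2 + \mu I$ with $H_1(X)=\sum_j w_j A_j(A_j^*XA_j)^{-1}A_j^*$, $H_2(X)=-X^{-1}$, computing the derivatives as Kronecker products ($\nabla H_2(X) = X^{-1}\otimes X^{-1}$ and, for $H_1$, an expression it collapses to $-G^{-1}(X)\otimes G^{-1}(X)$), and then bounding the operator norms of these derivatives on $D_{\delta,\Delta}$. Your per-$j$ resolvent argument is the integrated version of the same estimate, and it is in one respect cleaner: the paper's step asserting $\sum_j w_j\,(M_j\otimes M_j) = \bigl(\sum_j w_j M_j\bigr)\otimes\bigl(\sum_j w_j M_j\bigr)$ ``by the distributive law'' is not an identity, whereas your use of $\sum_j w_j = 1$ to average the per-$j$ Lipschitz bounds is airtight. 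Your key bound $\norm{A_j(A_j^*XA_j)^{-1}A_j^*}\le\norm{X^{-1}}$ via the orthogonal projection $A_j(A_j^*A_j)^{-1}A_j^*\preceq I$ is the same fact the paper extracts from Lemma~\ref{prop:4-12} (operator-decreasingness), so the mathematical core coincides.

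On the constant: the discrepancy you flag is real and is not your error. With $D_{\delta,\Delta}=\{\delta I\preceq X\preceq\Delta I\}$ and $G(X)\succeq\delta I$ (Lemma~\ref{lem:G-low-bound}), one gets $\norm{X^{-1}},\norm{G^{-1}(X)}\le 1/\delta$, and hence exactly your ${\rm Lip}(\nabla F_\mu)\le 2/\delta^2$. The paper reaches $2/\delta$ by asserting $\norm{G^{-1}(X)}^2\le 1/\delta$ and $\norm{X^{-1}}^2\le 1/\delta$, i.e.\ by implicitly working with the convention $\norm{X^{-1}}\le 1/\sqrt{\delta}$ (a convention it also uses elsewhere in the same proof, e.g.\ $\gamma\le 1/\sqrt{\delta}$), which is in tension with the stated definition of $D_{\delta,\Delta}$ unless $\delta\ge 1$ or the set is meant as $\sqrt{\delta}\,I\preceq X$. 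So your write-up proves the lemma up to this $\delta$-versus-$\sqrt{\delta}$ bookkeeping, and your instinct to pin the exponent to the normalization of $\delta$ before finalizing the constant is exactly the right resolution.
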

The proofs of Theorem~\ref{thm:conv-non-asymp} and Lemma~\ref{lem:G4-contract} are given in Section~\ref{sec:convergence-G-reg}. We discuss the complexity of the algorithm in more detail in Section~\ref{sec:convergence-G-reg}.

A practical implementation of our fixed-point approach requires an initialization in the set $D_{\delta,\Delta}$ and in turn an explicit computation of the parameters $\delta$ and $\Delta$ for a given input datum $(\Ac,\vw)$.  We discuss the difficulty of this problem, as well as its connection to constrained subdeterminant maximization in Section~\ref{sec:delta}.

\begin{rmk}\normalfont
    The map $G$ resembles the alternate scaling algorithm for Brascamp-Lieb constants~\citep[Alg.~1]{garg2018algorithmic}. The resemblance of both approaches derives from an exploitation of the difference-of-convex structure of problem~\ref{eq:F} (see also~\citep{sra-weber-2022}). However, the Thompson geometry perspective employed in this work, utilizes fundamentally different tools than the Riemannian lens employed in the previous literature. To the best of our knowledge, the maps $G_t$ and $G_\mu$ have not been considered previously.
\end{rmk}

\section{Related work}\label{sec:related}
Computing Brascamp--Lieb constants has been an actively studied problem for many years. Following seminal work by Brascamp and Lieb~\citep{BL1,BL2,lieb} and Bennett et al.~\citep{tao-paper}, recent work has analyzed the problem through a geometric lens, exploiting geodesic convexity to utilize tools from Riemannian Optimization.

A growing body of literature has analyzed a formulation of the problem, which links to the more general \emph{operator scaling problem}~\citep{allen-zhu,garg,garg2018algorithmic,kwok,franks2018operator,burgisser2018efficient,burgisser2019towards}.  We briefly recall some of the key results in this line of work and comment on their relation to the current paper.

The \emph{operator scaling problem} introduced by Gurvits~\citep{gurvits} asks to compute, for a given operator $\Phi$, left and right scaling matrices, such that the scaled operator $\Phi'$ is doubly stochastic. Formally, let $\Ac = \big(A_1, \dots, A_m \big)$ be a tuple of matrices with $A_j \in \reals^{d \times d_j}$ that define a positive operator 
\begin{equation*}
\Phi_\Ac(X) = \sum_{j \in [m]} A_j X A_j^* \; .
\end{equation*}
Furthermore, consider the notion of \emph{capacity} for an operator $\Phi_\Ac$, defined as
\begin{align}\label{eq:cap}
{\rm cap} (\Phi_\Ac) := \inf_{X \succ 0, \det(X)=1} \det(\Phi_\Ac(X)) \; .
\end{align}
If an infimum $X^*$ can be attained in~\eqref{eq:cap}, then the scaling 
\begin{equation*}
    A_j' = \big(\Phi_\Ac(X^*)^{-1}	\big)^{1/2} A_j (X^*)^{1/2}
\end{equation*}
gives $\sum_{j \in [m]} A_j' (A_j')^* = 1$ and $\sum_{j \in [m]} (A_j')^* A_j' = 1$.  In consequence, the scaled operator 
\begin{align}
\Phi_\Ac'(X) = \sum_{j \in [m]} A_j' X (A_j')^*
\end{align}
is doubly stochastic as desired.  Therefore, minimizing the capacity provides a means to computing a suitable scaling.  A simple method for this task is Gurvits' algorithm~\citep{gurvits}, which can be viewed as a generalization of Sinkhorn's algorithm~\citep{sinkhorn}. The operator scaling problem itself can be viewed as a generalization of the \emph{matrix scaling problem} for which Sinkhorn's algorithm provides a simple, iterative solution. 
A convergence analysis of Gurvits' algorithm was later presented by Garg et al.~\citep{garg,garg2018algorithmic}.  They give a 
$O \big(	{\rm poly} \big( b,d, \frac{1}{\epsilon}	\big) \big)$ guarantee, where $b$ denotes the bit complexity of the Brascamp--Lieb datum, which can be exponential in $m,d$~\citep[Theorem~1.5]{garg2018algorithmic}.  
A second-order approach by Allen-Zhu et al.~\citep{allen-zhu} for operator scaling achieves ${\rm poly}(\log \frac{1}{\epsilon})$~\citep[Theorem M1]{allen-zhu}, further reducing the dependency on $\frac{1}{\epsilon}$, if applied for the computation of Brascamp-Lieb constants. 
Furthermore,  under the additional assumption that $\Ac$ fulfills the \emph{$\lambda$-spectral gap condition} ($\sigma_2$ is the second singular value), i.e.,
\begin{align*}
\sigma_2 \Big(	\nlsum_{j \in [m]} A_j \otimes A_j	\Big) \leq (1 - \lambda) \frac{\sum_{j \in [m]} \norm{A_j}_{\rm F}^2}{\sqrt{d d'}} \; ,
\end{align*}
stronger results can be shown (here, $d_j=d'$ for all $j \in [m]$). 
Indeed, Kwok et al.~\citep{kwok} show that
a gradient-based method achieves a factor $\log \frac{1}{\epsilon}$~\citep[Theorem~1.5]{kwok}. Using second-order methods, B{\"u}rgisser et al.~\citep{burgisser2019towards} give a $O(\log \frac{1}{\epsilon})$ complexity in the general cases, albeit still with an exponential dependence on the bit complexity of the weight vector $\vw$. This result also applies to the more general operator scaling problem.

The formulation of the Brascamp--Lieb problem utilized in these approaches differs from ours~\eqref{eq:F}. Notably, their formulation has an \emph{exponential} dependency on the bit complexity of the Brascamp--Lieb datum.  In contrast, ~\eqref{eq:F} is only \emph{polynomial} in the bit complexity of the datum~\citep{vishnoi}.  In addition,  while these approaches employ a more commonly used Riemannian structure on $\pd_d$ induced by the Frobenius norm,  our analysis uses the Thompson metric on $\pd_d$.

\section{Proofs of the results}\label{sec:proofs}
\subsection{Convergence of the fixed-point iteration using the map $G_t$}
\label{sec:convergence-G}
We begin by showing that the sequence of iterates $(X_k)_{k \in \mathbb{N}}$ produced by the map $G_t$ converges to a solution of Problem~\eqref{eq:F}:
\begin{theorem}[Theorem~\ref{thm:conv-G1}]
Let $X_0 \in D_{\delta, \Delta}$; the sequence $(X_k)_{k \in \mathbb{N}}$ generated by iterating $G_t$ converges to a solution of problem~\eqref{eq:F} as $k\to \infty$.
\end{theorem}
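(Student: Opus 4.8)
The plan is to invoke Theorem~\ref{thm:s-h}, so it suffices to verify its three hypotheses for the map $G$ restricted to $\Xc = D_{\delta,\Delta}$: (i) $\mathrm{Fix}(G) \cap D_{\delta,\Delta} \neq \emptyset$, (ii) $G$ is non-expansive on $D_{\delta,\Delta}$ with respect to $\delta_T$, and (iii) $G$ is asymptotically regular, i.e. $\delta_T(G(X_{k+1}), G(X_k)) \to 0$. Once these are in place, Theorem~\ref{thm:s-h} yields convergence of $\{X_k\}$ to some $X^\ast \in \mathrm{Fix}(G)$; and since $X^\ast = G(X^\ast)$ is precisely the stationarity condition $\nabla F(X^\ast) = 0$ derived in~\eqref{eq:pre1}, geodesic convexity of $F$ upgrades this to global optimality, so $X^\ast$ solves~\eqref{eq:F}. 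A preliminary point is that $G$ actually maps $D_{\delta,\Delta}$ into itself (otherwise the iteration leaves the set where non-expansivity is claimed); this I would check directly from the explicit form of $G$ together with the spectral bounds~\citep[Proposition~5.2]{tao-paper}, and I would treat it as part of step (i).

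For non-expansivity (ii), the strategy is to decompose $G$ into elementary Thompson-metric operations from Lemma~\ref{lem:prop-dT}. Writing $G(X) = \bigl(\sum_j w_j A_j (A_j^\ast X A_j)^{-1} A_j^\ast\bigr)^{-1}$, I would argue along the chain: $X \mapsto A_j^\ast X A_j$ is non-expansive by property~(3) (since each $A_j$ has full rank); $Y \mapsto Y^{-1}$ is an isometry by property~(1); the convex combination $\sum_j w_j(\cdot)$ is handled by the $m$-term generalization of property~(5) (scaling by the positive constants $w_j$ is an isometry, as $\delta_T(\lambda Z, \lambda W) = \delta_T(Z,W)$); and the final inversion is again an isometry by~(1). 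Composing, $\delta_T(G(X),G(Y)) \le \max_j \delta_T(A_j^\ast X A_j, A_j^\ast Y A_j) \le \delta_T(X,Y)$.

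For asymptotic regularity (iii) — which I expect to be the main obstacle — non-expansivity alone does not suffice in a general metric space, so one must exploit more structure. The natural route is to observe that $\{X_k\} \subset D_{\delta,\Delta}$, which is compact, hence some subsequence converges (in $\delta_T$, or equivalently in operator norm on this set) to a limit point $\bar X$; non-expansivity makes $k \mapsto \delta_T(X_{k+1},X_k)$ non-increasing, so it has a limit $\ell \ge 0$, and the goal is to show $\ell = 0$. One approach is to use that $F$ is a strict Lyapunov function for the iteration — i.e. $F(G(X)) \le F(X)$ with equality only at fixed points, which follows because $G(X)$ is (up to the geodesic-convexity argument) a descent step for $F$ — so $F(X_k)$ converges, forcing $X_{k+1}$ and $X_k$ to become indistinguishable along convergent subsequences; combined with continuity of $G$ on the compact set $D_{\delta,\Delta}$ and monotonicity of $\delta_T(X_{k+1},X_k)$, this gives $\ell = 0$. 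Alternatively, one can appeal to a fixed-point-free-orbit argument à la nonlinear Perron–Frobenius theory on cones. Either way, the delicate part is connecting the Euclidean/Riemannian descent structure of $F$ to convergence in the Thompson metric on the compact set $D_{\delta,\Delta}$; the compactness of $D_{\delta,\Delta}$ (established separately via~\citep[Proposition~5.2]{tao-paper}) and the equivalence of $\delta_T$ with the norm topology on order-bounded sets are the tools that make this go through.
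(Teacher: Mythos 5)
Your overall skeleton matches the paper's: verify the three hypotheses of Theorem~\ref{thm:s-h} (fixed point in $D_{\delta,\Delta}$, non-expansivity, asymptotic regularity) and then conclude via stationarity plus geodesic convexity. Your fixed-point step is the paper's argument verbatim (Theorem~\ref{thm:maximizer} plus \citep[Proposition~5.2]{tao-paper}). For non-expansivity you take a genuinely different and perfectly valid route: you decompose $G$ into the elementary operations of Lemma~\ref{lem:prop-dT} (congruence by full-rank $A_j$, inversion, positively weighted sums), getting $\delta_T(G(X),G(Y)) \le \max_j \delta_T(A_j^\ast X A_j, A_j^\ast Y A_j) \le \delta_T(X,Y)$, whereas the paper first proves that $G$ is homogeneous and order-preserving (Lemma~\ref{lem:G1-prop}) and then runs the classical nonlinear Perron--Frobenius argument with the constants $M(X/Y)$, $M(Y/X)$ (Lemma~\ref{lem:nonexp}). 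Your version is arguably more economical given that Lemma~\ref{lem:prop-dT} is already on the table; the paper's version has the side benefit that order-preservation is reused later (Lemma~\ref{lem:G-low-bound}).

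The genuine gap is in your asymptotic-regularity step. Your argument hinges on the claim that $F$ is a strict Lyapunov function for the Picard iteration, i.e.\ $F(G(X)) \le F(X)$ with equality only at fixed points, which you justify only by saying $G(X)$ ``is a descent step for $F$'' up to geodesic convexity. That does not follow: $G$ is obtained by rearranging the stationarity equation $\nabla F(X)=0$, not by taking a (Riemannian or Euclidean) gradient step, and geodesic convexity of $F$ says nothing about monotonicity of $F$ along Picard iterates. The claim is in fact true, but proving it requires a separate idea — e.g.\ a majorize--minimize argument: since $\log\det$ is concave, linearizing $\sum_j w_j \log\det(A_j^\ast X A_j)$ at $X_k$ gives a surrogate upper bound of $F$ whose exact minimizer is precisely $G(X_k)$, whence $F(X_{k+1}) \le F(X_k)$, with equality forcing stationarity. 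Without some such argument your route to $\ell = 0$ does not close. (Your compactness bookkeeping also needs $G(D_{\delta,\Delta}) \subseteq D_{\delta,\Delta}$, which you rightly flag but do not prove; the paper only establishes the lower bound $G(X) \succeq \delta I$ in Lemma~\ref{lem:G-low-bound} and boundedness above via Fej\'er monotonicity.) The paper avoids all of this by a shorter route you could have used directly: once a fixed point $X^\ast$ exists, non-expansivity gives Fej\'er monotonicity $\delta_T(X_{k+1},X^\ast) \le \delta_T(X_k,X^\ast)$, hence boundedness of the orbit, from which asymptotic regularity is deduced (invoking the classical results for non-expansive maps cited there, \citep{pazy_asymptotic_1971,reich_asymptotic_1973}).
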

\noindent We first show the existence of a fixed point of the map $G$:
\begin{lem}\label{lem:fix-point}
$G$ has a fixed point in $D_{\delta, \Delta}$.
\end{lem}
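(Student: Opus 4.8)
The plan is to sidestep a direct fixed-point argument for $G$ and instead exploit the optimization problem~\eqref{eq:F}. By the gradient formula $\nabla F(X) = \nlsum_j w_j A_j(A_j^\ast X A_j)^{-1}A_j^\ast - X^{-1}$ recorded above, for any $X \in \pd_d$ the matrix equation $X = G(X)$ is exactly the stationarity condition $\nabla F(X) = 0$ (this is the rearrangement leading to~\eqref{eq:giter}). Hence it suffices to exhibit a stationary point of $F$ lying in $D_{\delta,\Delta}$.

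\textbf{Steps.} First I would invoke Lieb's theorem~\citep{lieb} — equivalently, the characterization of global maximizers in Theorem~\ref{thm:maximizer}, pulled back to the $X$-variable via $X = \nlsum_j w_j A_j^\ast Z_j A_j$ — to conclude that for a feasible datum the supremum~\eqref{eq:BL-sup} is attained, so that $F$ has a minimizer $X^\ast \succ 0$ on $\pd_d$. Second, since $\pd_d$ is open, $X^\ast$ is an interior minimizer, so $\nabla F(X^\ast) = 0$; rearranging this identity exactly as in the derivation of~\eqref{eq:giter} gives $X^\ast = G(X^\ast)$, i.e. $X^\ast \in {\rm Fix}(G)$. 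Third, I would appeal to~\citep[Proposition~5.2]{tao-paper}, which furnishes explicit constants $\delta,\Delta>0$ depending only on $(\Ac,\vw)$ with $\delta I_d \preceq X^\ast \preceq \Delta I_d$; this is precisely the set $D_{\delta,\Delta}$ of~\eqref{eq:def-D} used throughout (see also Section~\ref{sec:delta}). Combining the three steps yields $X^\ast \in {\rm Fix}(G) \cap D_{\delta,\Delta}$, which is the claim.

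\textbf{On a self-contained alternative and the main obstacle.} One could instead aim for a Brouwer-type argument applied to $G$ on the compact convex set $D_{\delta,\Delta}$, which would require $G(D_{\delta,\Delta}) \subseteq D_{\delta,\Delta}$. The lower inclusion is easy: writing $P_j := A_j(A_j^\ast A_j)^{-1}A_j^\ast$ for the orthogonal projector onto the range of $A_j$, the congruence $A_j^\ast X A_j \succeq \delta\,A_j^\ast A_j$ together with $P_j \preceq I_d$ and $\nlsum_j w_j = 1$ gives $\nlsum_j w_j A_j(A_j^\ast X A_j)^{-1}A_j^\ast \preceq \tfrac1\delta I_d$, hence $G(X) \succeq \delta I_d$ whenever $X \succeq \delta I_d$. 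The upper inclusion is the delicate point: the analogous estimate only yields $G(X) \preceq \Delta\,\bigl(\nlsum_j w_j P_j\bigr)^{-1}$, so one needs a quantitative lower bound on $\nlsum_j w_j P_j$, which is a quantitative form of the feasibility/simplicity conditions of Theorem~\ref{thm:feasible}. I expect that to be the one genuinely nontrivial ingredient along the direct route; the optimization argument above avoids it entirely by importing both the existence and the localization of the minimizer from~\citep{lieb,tao-paper}.
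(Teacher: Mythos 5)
Your main argument is exactly the paper's proof: attainment of the minimizer of $F$ (via Theorem~\ref{thm:maximizer}/Lieb), stationarity of the interior minimizer giving $X^\ast = G(X^\ast)$, and localization $X^\ast \in D_{\delta,\Delta}$ via~\citep[Proposition~5.2]{tao-paper}. Your closing remarks on a self-map/Brouwer alternative are a correct diagnosis of why the paper does not take that route, but they are supplementary; the core proof is correct and matches the paper.
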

\begin{proof}
From Theorem~\ref{thm:maximizer} we know that the minimum is attained in problem~\eqref{eq:F}, i.e., there exists a point $X^*$, such that $F(X^*) = \inf_{X} F(X)$. $X^*$ must be a stationary point, i.e., $\nabla F(X^*)=0$. Hence, by construction $X^* = G(X^*)$. Moreover,~\citep[Prop. 5.2]{tao-paper} implies that the minimum lies in a set of the form $D_{\delta,\Delta}$ (see section~\ref{sec:delta} for more details). This implies that $G$ has a fixed point in $D_{\delta,\Delta}$.
\end{proof}

\noindent For proving Theorem~\ref{thm:conv-G1}  we need a few additional auxiliary results that we now present. We start by showing that $G$ is homogeneous and order-preserving:
\begin{lem}\label{lem:G1-prop}
Let $G$ be as defined in~\eqref{eq:map}. Then:
\begin{enumerate}
\item For a scalar $\alpha \in \mathbb{R}$ we have $G(\alpha X)=\alpha G(X)$, i.e., $G$ is \emph{homogeneous}.
\item For $X,Y \in \mathbb{P}_d$ with $X \preceq Y$ ($X \prec Y$) we have $G(X) \preceq G(Y)$ ($G(X) \prec G(Y)$), i.e., $G$ is \emph{order-preserving}.
\end{enumerate}
\end{lem}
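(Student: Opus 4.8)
The map $G(X) = \bigl(\sum_j w_j A_j (A_j^\ast X A_j)^{-1} A_j^\ast\bigr)^{-1}$ is built from three elementary operations: congruence by a fixed (rectangular, full-rank) matrix $X \mapsto A_j^\ast X A_j$, matrix inversion, and taking a positive-weighted sum. I would establish each of the two claimed properties by tracking these operations through the definition, using that scalar multiplication and the Löwner order interact well with every one of them.

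\medskip
\textbf{Step 1: Homogeneity.} For $\alpha > 0$ (note $\alpha X \in \mathbb{P}_d$ forces $\alpha > 0$, which is the only case that makes sense here; I would state this restriction), I would compute directly: $A_j^\ast(\alpha X)A_j = \alpha\,A_j^\ast X A_j$, hence $(A_j^\ast(\alpha X)A_j)^{-1} = \alpha^{-1}(A_j^\ast X A_j)^{-1}$, hence $A_j(A_j^\ast(\alpha X)A_j)^{-1}A_j^\ast = \alpha^{-1}A_j(A_j^\ast X A_j)^{-1}A_j^\ast$. Summing over $j$ with the fixed weights $w_j$ pulls out the common factor $\alpha^{-1}$, and the final inversion turns $\alpha^{-1}(\cdots)$ back into $\alpha(\cdots)^{-1}$, giving $G(\alpha X) = \alpha\,G(X)$. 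This is a one-line chain of equalities once the scalar is pulled through each step.

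\medskip
\textbf{Step 2: Order preservation.} Suppose $X \preceq Y$. Congruence preserves the Löwner order: $A_j^\ast X A_j \preceq A_j^\ast Y A_j$ for each $j$ (pre- and post-multiplying $Y - X \succeq 0$ by $A_j^\ast$ and $A_j$ keeps it positive semidefinite). Since $A_j$ has full rank and $X, Y \succ 0$, both congruences are in fact positive definite, so inversion applies and \emph{reverses} the order: $(A_j^\ast Y A_j)^{-1} \preceq (A_j^\ast X A_j)^{-1}$ (antitonicity of matrix inversion on $\mathbb{P}$). Congruing again by $A_j$ (this time on the left by $A_j$, right by $A_j^\ast$) preserves this: $A_j(A_j^\ast Y A_j)^{-1}A_j^\ast \preceq A_j(A_j^\ast X A_j)^{-1}A_j^\ast$. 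Multiplying by $w_j > 0$ and summing (the Löwner order is closed under nonnegative combinations) gives $\sum_j w_j A_j(A_j^\ast Y A_j)^{-1}A_j^\ast \preceq \sum_j w_j A_j(A_j^\ast X A_j)^{-1}A_j^\ast$. A final application of antitonicity of inversion flips the order one last time to yield $G(X) \preceq G(Y)$. For the strict statement, one checks that the argument goes through with strict inequalities: if $X \prec Y$ then $A_j^\ast X A_j \prec A_j^\ast Y A_j$ (again using full rank of $A_j$ so the congruence of a positive definite difference stays positive definite), and each of the monotone operations above — strict antitonicity of inversion, congruence by full-rank matrices, positive-weighted sums — preserves strictness, so $G(X) \prec G(Y)$.

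\medskip
\textbf{Main obstacle.} There is no deep obstacle; the content is entirely the standard catalogue of operator-monotone facts (congruence preserves $\preceq$, inversion is antitone on $\mathbb{P}_d$, positive combinations preserve $\preceq$). The one point that needs genuine care is the \emph{strict} case: antitonicity of inversion is strict ($0 \prec A \prec B \implies B^{-1} \prec A^{-1}$), but one must make sure the intermediate quantities stay strictly positive definite so that "strict" is not lost — this is exactly where full rank of the $A_j$ (and surjectivity, so $A_j^\ast X A_j \succ 0$ rather than merely $\succeq 0$) is used, and it should be flagged explicitly. I would cite the relevant monotonicity facts (e.g. from Bhatia's matrix analysis text) rather than reprove them.
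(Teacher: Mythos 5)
Your argument is correct and is essentially the paper's own proof: congruence preserves the L\"owner order, inversion is antitone on positive definite matrices, positive-weighted sums preserve the order, and a final inversion yields $G(X)\preceq G(Y)$ (the paper dismisses homogeneity as trivial, as you essentially do, and your restriction to $\alpha>0$ is the right reading). One fine point your strict case glosses over (as does the paper): the outer congruence $M\mapsto A_j M A_j^\ast$ goes from $d'\times d'$ up to $d\times d$, so it does \emph{not} preserve strictness term by term when $d'<d$; the strict conclusion $G(X)\prec G(Y)$ instead follows after summation, since $\sum_j w_j A_j D_j A_j^\ast\succ0$ for $D_j\succ0$ exactly when the ranges of the $A_j$ span $\reals^d$, which is the same condition needed for $G$ to be well defined in the first place.
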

\begin{proof}
Property (1) is trivial. We therefore move directly to property (2). First, note that for $X \preceq Y$, we have $A_j^\ast X A_j \preceq A_j^\ast Y A_j$.  Assuming that $A_j^\ast X A_j$ is invertible, this implies
\begin{equation*}
(A_j^\ast X A_j)^{-1} \succeq (A_j^\ast Y A_j)^{-1} \; .
\end{equation*}
From this, the claim follows immediately as
\begin{equation*}
   \Big( \nlsum_{j \in [m]} w_j A_j (A_j^\ast X A_j)^{-1} A_j^\ast \Big)^{-1} \preceq \Big( \nlsum_{j \in [m]} w_j A_j (A_j^\ast Y A_j)^{-1} A_j^\ast \Big)^{-1} \; .
\end{equation*}
The same arguments apply in the $\prec$ case.
\end{proof}
\noindent A direct consequence of this property is that $G$ is non-expansive.
\begin{lem}\label{lem:nonexp}
  $G$ is non-expansive.
\end{lem}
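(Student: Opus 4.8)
The plan is to derive non-expansivity of $G$ as an immediate corollary of Lemma~\ref{lem:G1-prop} together with the homogeneity and order-preservation of $G$, using a standard fact from nonlinear Perron--Frobenius theory: a map on a cone that is both order-preserving and homogeneous (of degree one) is automatically non-expansive with respect to the Thompson part metric. Concretely, I would start from the definition of the Thompson metric via $M(X/Y) = \inf\{\lambda > 0 : X \preceq \lambda Y\}$ and unwind what non-expansivity requires: it suffices to show $M(G(X)/G(Y)) \le M(X/Y)$ for all $X, Y \in \mathcal{X}$, since by symmetry the same bound holds with $X$ and $Y$ swapped, and $\delta_T$ is the log of the max of the two.

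The key step is the following short chain. Fix $X, Y \in D_{\delta,\Delta}$ and let $\lambda := M(X/Y)$, so that $X \preceq \lambda Y$ by definition of the infimum (the infimum is attained here because the Löwner order is closed). Applying order-preservation (Lemma~\ref{lem:G1-prop}(2)) gives $G(X) \preceq G(\lambda Y)$, and then homogeneity (Lemma~\ref{lem:G1-prop}(1)) gives $G(\lambda Y) = \lambda\, G(Y)$; hence $G(X) \preceq \lambda\, G(Y)$. By definition of $M$ this yields $M(G(X)/G(Y)) \le \lambda = M(X/Y)$. Swapping the roles of $X$ and $Y$ gives $M(G(Y)/G(X)) \le M(Y/X)$, and therefore
\[
\delta_T(G(X), G(Y)) = \log \max\{M(G(X)/G(Y)), M(G(Y)/G(X))\} \le \log\max\{M(X/Y), M(Y/X)\} = \delta_T(X,Y),
\]
which is exactly non-expansivity in the sense of Definition~\ref{def:contract}(1).

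I do not anticipate a genuine obstacle here; the only points needing a word of care are (i) that $\lambda = M(X/Y)$ is indeed achieved, i.e. $X \preceq \lambda Y$ holds with this specific $\lambda$ — this follows because $\{\mu : X \preceq \mu Y\}$ is closed and nonempty on $\pd_d$; and (ii) that one could alternatively invoke Lemma~\ref{lem:prop-dT} directly, writing $G$ as a composition $X \mapsto A_j^\ast X A_j$ (property 3, non-expansive), inversion (property 1, isometry), the weighted sum (property 5, the max bound), and a final inversion, which already shows $\delta_T(G(X),G(Y)) \le \max_j \delta_T(X,Y) = \delta_T(X,Y)$. Either route works; I would present the order-preserving/homogeneous argument as the clean conceptual proof and perhaps remark that the componentwise estimate via Lemma~\ref{lem:prop-dT} gives an alternative derivation.
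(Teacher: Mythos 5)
Your proof is correct and follows essentially the same route as the paper: both arguments use the homogeneity and order-preservation of $G$ from Lemma~\ref{lem:G1-prop} to deduce $G(X) \preceq M(X/Y)\, G(Y)$ (and its counterpart with $X$ and $Y$ swapped), then conclude $M(G(X)/G(Y)) \le M(X/Y)$ and take the maximum and logarithm. Your added remarks on the attainment of the infimum and the alternative derivation via Lemma~\ref{lem:prop-dT} are fine but not needed.
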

\begin{proof}
We must prove that $\delta_{\rm T}(G(X), G(Y)) \leq \delta_{\rm T}(X,Y)$. By Definition~\ref{def:thompson}(2) we have
$M(Y/X)^{-1} Y \preceq X \preceq M(X/Y) Y$, and also
\begin{equation} \label{eq:G-bounds}
M(G(Y)/G(X))^{-1} G(Y) \preceq G(X) \preceq M(G(X)/G(Y)) G(Y) \; .
\end{equation}
Furthermore,  since $G$ is order-preserving and homogeneous (Lemma~\ref{lem:G1-prop}), we have
\begin{align*}
G(X) &\preceq G(M(X/Y) Y) = M(X/Y) G(Y) \\
G(X) &\succeq G(M(Y/X)^{-1} Y) = M(Y/X)^{-1} G(Y) \; .
\end{align*}
Since $M(G(Y)/G(X)), M(G(X)/G(Y))$ are extremal in inequality~\eqref{eq:G-bounds}, this implies
\begin{align*}
M(X/Y) &\geq M(G(X)/G(Y)) \\
M(Y/X) &\geq M(G(Y)/G(X)) \; ,
\end{align*}
which gives
\begin{align*}
\max \lbrace M(X/Y), M(Y/X) \rbrace \geq \max \lbrace M(G(X)/G(Y)), M(G(Y)/G(X)) \rbrace 
\end{align*}
and thereby the claim. 
\end{proof}
\noindent A direct consequence of the non-expansivity of $G$ is the following characterization of its ``average map'' $G_t$ (see, e.g.,~\citep{gornicki2019remarks}):
\begin{cor}
    $G_t$ is non-expansive and its fixed point set is that of $G$.
\end{cor}

\noindent Next, we establish the asymptotic regularity of $G_t$: Recall the following definition:
\begin{defn}[Asymptotic regularity]\label{def:asymp-reg}
    Let $T: \Xc \rightarrow \Xc$ ($\Xc \subset \pd_d$) denote a nonlinear map on the positive definite matrices. We say that $T$ is \emph{asymptotically regular} if $\lim_{k \rightarrow \infty} \delta_{\rm T}(T(X_{k+1}),T(X_k))=0$ for every sequence $(X_k)_{k \in \mathbb{N}}$, such that $X_0 \in \Xc$ and $X_{k+1}=T(X_k)$ for all $k \in \mathbb{N}$.
\end{defn}
\begin{lem}
$G_t$ is asymptotically regular with respect to the Thompson metric, i.e., 
\begin{equation*}
    \delta_{\rm T} \big(G_t(X_{k+1}), G_t(X_k) \big) \rightarrow 0 \; .
\end{equation*}
\end{lem}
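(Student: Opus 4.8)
The plan is to leverage the established non-expansivity of $G$ (Lemma~\ref{lem:nonexp}) together with the compactness of $D_{\delta,\Delta}$ and the monotonicity-type structure of the iteration. First I would record the basic facts: the sequence $(X_k)$ stays in $D_{\delta,\Delta}$ (this follows because $G$ maps $D_{\delta,\Delta}$ into itself — a point that should be verified from the explicit bounds of~\citep[Prop.~5.2]{tao-paper}, or at least argued from order-preservation and homogeneity), and $G$ has a fixed point $X^\ast \in D_{\delta,\Delta}$ by Lemma~\ref{lem:fix-point}. By non-expansivity, the scalar sequence $d_k := \delta_T(X_{k+1}, X_k) = \delta_T(G(X_k), G(X_{k-1}))$ is non-increasing, hence convergent to some limit $d_\infty \geq 0$; likewise $\delta_T(X_k, X^\ast)$ is non-increasing and bounded, hence convergent. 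The goal is to show $d_\infty = 0$.

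The main idea I would pursue is to exploit the Finsler/convex-cone structure more finely than plain non-expansivity. Since $G$ is order-preserving and homogeneous (Lemma~\ref{lem:G1-prop}), it is a so-called \emph{Thompson non-expansive} map of a very rigid type; one standard route (as in~\citet{lemmens_nussbaum_2012}) is: if equality $\delta_T(G(X_k),G(X_{k-1})) = \delta_T(X_k,X_{k-1})$ held persistently, then the extremal eigenvector structure witnessing $M(X_k/X_{k-1})$ and $M(X_{k-1}/X_k)$ would have to be preserved exactly under $G$, forcing $X_k, X_{k-1}$ to lie (asymptotically) along a geodesic on which $G$ acts as an isometry; the presence of the $-\log\det$ term and the strict convexity of $F$ in the Riemannian metric rules this out on $D_{\delta,\Delta}$. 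A cleaner alternative I would try first: use compactness of $D_{\delta,\Delta}$ (in the Thompson topology, which coincides with the usual one on $\pd_d$ restricted to this set) to extract a convergent subsequence $X_{k_i} \to \bar X$; then $X_{k_i+1} = G(X_{k_i}) \to G(\bar X)$ by continuity of $G$, so $d_{k_i} \to \delta_T(G(\bar X), \bar X)$, and since $d_k$ converges, $d_\infty = \delta_T(G(\bar X), \bar X)$. Separately, $\delta_T(X_k, X^\ast) \to c$ for some $c \geq 0$, so $\delta_T(\bar X, X^\ast) = c$ and also $\delta_T(G(\bar X), X^\ast) = \delta_T(G(\bar X), G(X^\ast)) \le c$, and passing to the limit along $X_{k_i+1}$ gives $\delta_T(G(\bar X), X^\ast) = c$ as well; iterating, all points $\bar X, G(\bar X), G^2(\bar X), \dots$ sit on a sphere of radius $c$ around $X^\ast$. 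One then argues that the only way the orbit of $\bar X$ under the strictly-$F$-decreasing map $G$ can remain on this sphere is $\bar X = X^\ast$, whence $d_\infty = \delta_T(G(\bar X),\bar X) = 0$.

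The cleanest way to close that last gap is to bring in the objective $F$: iterating $G$ corresponds to the first-order optimality fixed-point equation for the geodesically convex $F$, and one can show $F(G(X)) \le F(X)$ with equality iff $X = G(X)$ (this is essentially a one-step descent lemma for the Picard iteration of $\nabla F = 0$; it can be read off from convexity of $F$ or verified by a direct computation using the explicit form of $G$). Then $F(X_k)$ is non-increasing and bounded below (by $F(X^\ast)$), hence convergent, so $F(X_k) - F(X_{k+1}) \to 0$. Along the subsequence, $F(\bar X) - F(G(\bar X)) = 0$, which forces $\bar X = G(\bar X)$, i.e.\ $\bar X \in \mathrm{Fix}(G)$, and therefore $d_\infty = \delta_T(G(\bar X), \bar X) = 0$ as desired. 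I expect the \textbf{main obstacle} to be justifying the strict descent-with-equality-only-at-fixed-points property $F(G(X)) \le F(X)$ cleanly — either locating the right statement in the convexity toolbox or carrying out the short but slightly delicate computation — and, secondarily, confirming that $G(D_{\delta,\Delta}) \subseteq D_{\delta,\Delta}$ so that compactness can be invoked; both are routine in spirit but need care with the constants coming from~\citep{tao-paper}.
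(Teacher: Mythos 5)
Your route is genuinely different from the paper's. The paper's own proof is very short: it takes a fixed point $X^\ast$ from Lemma~\ref{lem:fix-point}, uses non-expansivity (Lemma~\ref{lem:nonexp}) to get Fej\'er monotonicity $\delta_T(X_{k+1},X^\ast)\le\delta_T(X_k,X^\ast)$, concludes the orbit is bounded, and infers asymptotic regularity from boundedness. You instead note that $d_k=\delta_T(X_{k+1},X_k)$ is non-increasing, extract a convergent subsequence by compactness, and identify the subsequential limit $\bar X$ as a fixed point via a strict-descent property of $F$ under $G$, which forces $d_\infty=\delta_T(G(\bar X),\bar X)=0$. This buys you a self-contained argument whose every step is checkable; the paper's final inference (bounded orbit $\Rightarrow$ asymptotic regularity) is left implicit and does not hold for general non-expansive maps (isometries have bounded orbits but are not asymptotically regular), so it tacitly relies on further structure. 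Your approach replaces that inference with the variational structure of the problem, at the cost of two auxiliary facts you flag as obstacles.

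Both obstacles are closable, and more easily than you suggest. For compactness you do not need $G(D_{\delta,\Delta})\subseteq D_{\delta,\Delta}$: the Fej\'er monotonicity you already recorded confines the whole orbit to the closed Thompson ball $\{X:\delta_T(X,X^\ast)\le\delta_T(X_0,X^\ast)\}$, which is a compact subset of $\pd_d$ (its elements are uniformly bounded above and uniformly bounded away from singularity), so the subsequence extraction goes through directly. For the descent lemma, use the concave--convex split of $F$: the map $X\mapsto\sum_j w_j\log\det(A_j^\ast X A_j)$ is concave, so linearizing it at $X_k$ gives a surrogate $Q(X;X_k)\ge F(X)$ with $Q(X_k;X_k)=F(X_k)$, and $Q(\cdot;X_k)$ is strictly convex (its nonlinear part is $-\log\det X$) with unique minimizer exactly $X=\bigl(\sum_j w_j A_j(A_j^\ast X_k A_j)^{-1}A_j^\ast\bigr)^{-1}=G(X_k)$. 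Hence $F(G(X_k))\le Q(G(X_k);X_k)\le Q(X_k;X_k)=F(X_k)$, with strict inequality unless $G(X_k)=X_k$. With those two points supplied, your argument is complete and in fact somewhat stronger than the one printed in the paper.
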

\begin{proof}
    The result is a direct consequence of Ishikawa's theorem~\citep{ishikawa1976fixed}.
\end{proof}

\noindent For future reference we further note that the non-expansivity of $G$ implies boundedness:
\begin{lem}\label{lem:bounded}
$G$ is bounded on $D_{\delta, \Delta}$, i.e., there exists a $R>0$ depending on the input datum $(\Ac,\vw)$ only, such that $\norm{G(X_k)} < R$ for all $k \geq 0$.
\end{lem}
\begin{proof}
From Lemma~\ref{lem:fix-point} we know that ${\rm Fix(G) \neq \emptyset}$. Let $X^* \in {\rm Fix}(G)$. Due to the non-expansivity of $G$ (Lemma~\ref{lem:nonexp}), we have Fejér-monotonicity at $X^*$:
\begin{equation*}
    \delta_{\rm T} \big( X_{k+1}, X^* \big) = \delta_{\rm T} \big( G(X_k), G(X^*) \big) \leq \delta_{\rm T} \big( X_k, X^* \big) \; .
\end{equation*}
Thus, the iterates $\{G(X_k)\}_{k \geq 0}$ must remain bounded.
\end{proof}
\color{black}
\color{black}
\begin{rmk}\normalfont
We note that it is a well-known fact that non-expansive maps either do not have a fixed point or are bounded. Classical versions of this result can be found in~\citep[Corr. 6]{pazy_asymptotic_1971} (for Hilbert spaces) and in~\citep{reich_asymptotic_1973} (for Banach spaces).
\end{rmk}

\noindent In summary, we obtain our asymptotic convergence result (Theorem~\ref{thm:conv-G1}) as a direct consequence of Theorem~\ref{thm:s-h}:
With initialization at any $X \in D_{\delta, \Delta}$ the sequence produced by the map
$G_t$ converges to a solution of problem~\eqref{eq:F}. \\

\noindent Finally, we want to give a lower bound on the sequence of iterates produced by the map $G$, which is needed for the subsequent analysis. 

We show the following bound:
\begin{lem}\label{lem:G-low-bound}
Every sequence $(X_k)_{k \in \mathbb{N}}$ with $X_0 \in D_{\delta,\Delta}$ and $X_{k+1}=G(X_k)$ for all $k \in \mathbb{N}$,
is bounded from below, i.e., $G(X_k) \succeq \delta I$.
\end{lem}
\begin{proof} 
Let $X \in D_{\delta,\Delta}$ be arbitrary; we want to show that $G(X) \succeq \delta I$. 
From Lemma~\ref{lem:G1-prop}(2) we know that $G$ is order-preserving, i.e. $X \succeq\delta I$ implies $G(X) \succeq G(\delta I)$.
Therefore, it remains to be shown that $G(\delta I) \succeq \delta I$. 
For this, notice that 
\begin{equation*}
    A(A^* Z A)^{-1} A^* \preceq Z^{-1} \Leftrightarrow \begin{pmatrix}
        Z^{-1} & A \\
        A^* & A^* Z A 
    \end{pmatrix} \succeq 0 \; ,
\end{equation*}
where the right inequality follows from the following factorization via Schur complements:
\begin{equation*}
    \begin{pmatrix}
        Z^{-1} & A \\
        A^* & A^* Z A
    \end{pmatrix}
    = \begin{pmatrix}
        I & 0 \\
        0 & A^*
    \end{pmatrix} \begin{pmatrix}
        Z^{-1} & I \\
        I & Z
    \end{pmatrix} \begin{pmatrix}
        I & 0 \\
        0 & P
    \end{pmatrix} \; .
\end{equation*}
\noindent This lemma implies in particular that 
\begin{equation*}
A(A^* Z A)^{-1} A^* \preceq Z^{-1} \; .
\end{equation*}
For $Z = \delta I$,  this gives (together with the min-max theorem)
\begin{align*}
\lambda_{\max}\bigl(\big(G(\delta I)\big)^{-1}\bigr) = \lambda_{\max} \bigl(	A_j^* \bigl( A_j^* (\delta I) A_j	\bigr)^{-1} A_j	\bigr) \leq \lambda_{\max} \bigl(	\delta^{-1} I \bigr) \; .
\end{align*}
With $\sum_{j \in [m]} w_j =1$, this implies $\lambda_{\max}\bigl(\bigl(G(\delta I)\bigr)^{-1}\bigr) \leq \delta^{-1}$, i.e. $(G(\delta I))^{-1} \preceq \delta^{-1} I$. This gives $G(\delta I) \succeq \delta I$. 
\end{proof}

\subsection{Non-asymptotic convergence analysis of $G_\mu$}
\label{sec:convergence-G-reg}
We start by showing that the regularized map $G_\mu$ has the crucial contractivity property.
\begin{lem}[Lemma~\ref{lem:G4-contract}]
$G_\mu$ is contractive.
\end{lem}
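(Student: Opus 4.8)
The plan is to mimic the argument for non-expansivity of $G$ (Lemma~\ref{lem:nonexp}), but to exploit the additive $\mu I$ term together with property~(6) of Lemma~\ref{lem:prop-dT} to upgrade the inequality to a strict one. Write $G_\mu(X) = \bigl(\mu I + H(X)\bigr)^{-1}$, where $H(X) := \nlsum_j w_j A_j(A_j^\ast X A_j)^{-1}A_j^\ast \succ 0$. First I would record that $X \mapsto H(X)$ is order-reversing and homogeneous (this is exactly the computation inside Lemma~\ref{lem:G1-prop}(2) together with inversion), so $X \mapsto \mu I + H(X)$ is order-reversing, and hence $X \mapsto \bigl(\mu I + H(X)\bigr)^{-1} = G_\mu(X)$ is order-\emph{preserving}. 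Homogeneity, however, is now broken by the constant $\mu I$, and that is precisely what buys us strict contraction.

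\textbf{Key steps.} Fix $X, Y \in D_{\delta,\Delta}$ and set $t := \delta_T(X,Y)$, $\lambda := e^{t} = \max\{M(X/Y), M(Y/X)\}$, so that $\lambda^{-1} Y \preceq X \preceq \lambda Y$ and, by symmetry, $\lambda^{-1} X \preceq Y \preceq \lambda X$. Applying the order-reversing map $H$ to $X \preceq \lambda Y$ and using $H(\lambda Y) = \lambda^{-1} H(Y)$ gives $H(X) \succeq \lambda^{-1} H(Y)$, hence
\[
\mu I + H(X) \;\succeq\; \mu I + \lambda^{-1} H(Y) \;=\; \lambda^{-1}\bigl(\lambda \mu I + H(Y)\bigr) \;\succeq\; \lambda^{-1}\bigl(\mu I + H(Y)\bigr),
\]
and the analogous lower bound with $X, Y$ swapped. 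At this naive level one only recovers $\delta_T(G_\mu(X), G_\mu(Y)) \le t$. To get strictness, instead of discarding the term $\lambda\mu I$ I would keep it: from $H(X) \succeq \lambda^{-1} H(Y)$ we get $\mu I + H(X) \succeq \lambda^{-1} H(Y) + \mu I$, and now apply Lemma~\ref{lem:prop-dT}(6) to the pair $\bigl(\lambda^{-1}(\mu I + H(Y)),\, \mu I + H(Y)\bigr)$ — more precisely, write $\mu I + H(X)$ and $\mu I + H(Y)$ as $A + (\text{something} \succeq 0)$ with a common summand $A = \mu I$: since $H$ maps $D_{\delta,\Delta}$ into a set bounded above (by Corollary~\ref{lem:bounded}, $\norm{G(X_k)} < R$, equivalently $H$ is bounded below, and since $A_j$ has full rank $H(X) \preceq (\text{const})I$ on $D_{\delta,\Delta}$), property~(6) yields
\[
\delta_T\bigl(\mu I + H(X),\, \mu I + H(Y)\bigr) \;\le\; \frac{\alpha}{\alpha+\mu}\,\delta_T\bigl(H(X), H(Y)\bigr),
\]
where $\alpha = \norm{H(X)} \vee \norm{H(Y)}$ is bounded by a datum-dependent constant on $D_{\delta,\Delta}$ and $\beta = \lambda_{\min}(\mu I) = \mu$. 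Combining with $\delta_T(H(X),H(Y)) \le \delta_T(X,Y)$ (which follows from $H$ being order-reversing and homogeneous, exactly as in Lemma~\ref{lem:nonexp}, using Lemma~\ref{lem:prop-dT}(1)) and with the inversion invariance $\delta_T(G_\mu(X),G_\mu(Y)) = \delta_T(\mu I + H(X), \mu I + H(Y))$ from Lemma~\ref{lem:prop-dT}(1), we obtain
\[
\delta_T(G_\mu(X), G_\mu(Y)) \;\le\; \frac{\alpha}{\alpha+\mu}\,\delta_T(X,Y) \;<\; \delta_T(X,Y)
\]
whenever $\delta_T(X,Y) > 0$ and $\mu > 0$, which is exactly contractivity in the sense of Definition~\ref{def:contract}(2).

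\textbf{Main obstacle.} The delicate point is the bookkeeping around property~(6) of Lemma~\ref{lem:prop-dT}: it requires a finite upper bound $\alpha = \norm{H(X)} \vee \norm{H(Y)}$, so I must verify that $H$ maps the compact set $D_{\delta,\Delta}$ into a norm-bounded subset of $\pd_d$ — equivalently that $(A_j^\ast X A_j)^{-1}$ stays bounded, which holds because $X \preceq \Delta I$ and $A_j$ is surjective forces $A_j^\ast X A_j \succeq c_j I$ for a datum-dependent $c_j > 0$. (One also needs $D_{\delta,\Delta}$ to be forward-invariant under $G_\mu$, or at least that the relevant iterates stay in a region where such a bound holds; this parallels Lemma~\ref{lem:G-low-bound} and can be argued the same way, noting $G_\mu(X) \preceq G(X)$.) Once this uniform bound is in hand, the contraction factor $\tfrac{\alpha}{\alpha+\mu} < 1$ is explicit and the rest is the routine Thompson-metric manipulation above; this same constant will feed into the rate in Theorem~\ref{thm:conv-non-asymp}.
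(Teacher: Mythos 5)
Your proof is correct and follows essentially the same route as the paper: you write $G_\mu(X) = \bigl(\mu I + H(X)\bigr)^{-1}$ with $H = G^{-1}$, use inversion invariance (Lemma~\ref{lem:prop-dT}(1)), apply Lemma~\ref{lem:prop-dT}(6) with the common summand $\mu I$ to get the factor $\tfrac{\alpha}{\alpha+\mu} < 1$, and finish with non-expansivity of $G$, exactly as in the paper's chain of inequalities with its $\gamma = \max\{\norm{G^{-1}(X)},\norm{G^{-1}(Y)}\}$. The only cosmetic difference is how the finiteness of $\alpha$ is justified (you argue via $A_j^\ast X A_j \succeq c_j I$ on $D_{\delta,\Delta}$, the paper cites Lemma~\ref{lem:G-low-bound}), which does not change the argument.
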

\begin{proof}
\begin{align*}
\delta_{\rm T} (G_\mu(X), G_\mu(Y)) &= \delta_{\rm T} \big( \mu I + G^{-1}(X), \mu I + G^{-1}(Y)	\big) \\
&\overset{(1)}{\leq} \frac{\gamma}{\gamma + \mu} \delta_{\rm T} \big(  G^{-1}(X),  G^{-1}(Y)	\big) \\
&\overset{(2)}{=} \frac{\gamma}{\gamma + \mu} \delta_{\rm T} \big(  G(X),  G(Y)	\big) \\
&\overset{(3)}{\leq}\frac{\gamma}{\gamma + \mu} \delta_{\rm T} \big( X,  Y	\big) \; ,
\end{align*}
where ($1$) follows from Lemma~\ref{lem:prop-dT}(6) with $\gamma = \max \lbrace \norm{G^{-1}(X)}, \norm{G^{-1}(Y)}	\rbrace \leq \frac{1}{\sqrt{\delta}}$ by Lemma~\ref{lem:G-low-bound};  ($2$) follows from Lemma~\ref{lem:prop-dT}(1) and ($3$) from $G$ being log-nonexpansive (Lemma~\ref{lem:nonexp}).  Since $\mu>0$, this implies $\delta_{\rm T} (G_\mu(X), G_\mu(Y)) < \delta_{\rm T} (X,Y)$. 
\end{proof}
The contractivity of the regularized map furthermore allows us to quantify its convergence rate non-asymptotically. 
\begin{theorem}
For a suitable choice of $\mu$, $G_{\mu}$ computes an $\epsilon$-approximate solution to problem (Eq.~\eqref{eq:F}) with iteration complexity $O \Big( \log \big( \frac{1}{\epsilon},\frac{\Delta}{\delta^{3/2}} ,\sqrt{d}, \frac{1}{R}\big) \Big)$, where $d, \delta, \Delta, R$ depend on the input datum $(\Ac,\vw)$ only.
\end{theorem}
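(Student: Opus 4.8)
The plan is to combine three ingredients: (i) a linear convergence rate for the contractive map $G_\mu$ in the Thompson metric, (ii) a bound relating the fixed point $X_\mu^\ast$ of $G_\mu$ to the true minimizer $X^\ast$ of $F$, controlled by $\mu$ via the smoothness of $F_\mu$ (Lemma~\ref{lem:F-smooth}), and (iii) a translation from closeness in $\pd_d$ (or in the Thompson metric) to multiplicative accuracy on $\BL(\Ac,\vw)$. First I would quantify the contraction factor. From the proof of Lemma~\ref{lem:G4-contract}, for all $X,Y \in D_{\delta,\Delta}$ we have $\delta_T(G_\mu(X),G_\mu(Y)) \le \frac{\gamma}{\gamma+\mu}\,\delta_T(X,Y)$ with $\gamma \le 1/\sqrt{\delta}$, so the contraction factor is $c_\mu := \frac{1/\sqrt\delta}{1/\sqrt\delta + \mu} = \frac{1}{1+\mu\sqrt\delta} < 1$. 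I should first check that $G_\mu$ maps $D_{\delta,\Delta}$ into itself (a lower bound analogous to Lemma~\ref{lem:G-low-bound} plus an upper bound using that the $\mu I$ term only increases $X^{-1}$, hence decreases $X$); then Banach's fixed-point theorem gives a unique $X_\mu^\ast \in D_{\delta,\Delta}$ with $X_\mu^\ast = G_\mu(X_\mu^\ast)$, and $\delta_T(X_k, X_\mu^\ast) \le c_\mu^k\,\delta_T(X_0, X_\mu^\ast) \le c_\mu^k \cdot \mathrm{diam}_{\delta_T}(D_{\delta,\Delta})$, where the diameter is $\log(\Delta/\delta)$. Hence after $k = O\!\big(\tfrac{1}{\mu\sqrt\delta}\log(\tfrac{\log(\Delta/\delta)}{\epsilon'})\big)$ iterations we have $\delta_T(X_k,X_\mu^\ast)\le \epsilon'$.

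Next I would bound the bias $\|X_\mu^\ast - X^\ast\|$. Since $X^\ast$ minimizes $F$ and $X_\mu^\ast$ minimizes $F_\mu = F + \mu\,\trace(\cdot)$, and $F_\mu$ is geodesically (indeed, by Lemma~\ref{lem:F-smooth}, also Euclidean-)smooth with $\mathrm{Lip}(\nabla F_\mu)\le 2/\delta$, I would use $\nabla F_\mu(X_\mu^\ast)=0$, i.e. $\nabla F(X_\mu^\ast) = -\mu I$, together with $\nabla F(X^\ast)=0$. Because $F$ is convex on $D_{\delta,\Delta}$ in the Euclidean sense on this compact set (or using strong convexity of $F$ restricted to $D$, which follows from its smoothness bound and the fact that $\nabla^2 F \succeq 0$ together with the boundedness of the domain — more carefully I would invoke that $F_\mu$ inherits a quadratic growth constant from the Hessian structure), I get $\|X^\ast - X_\mu^\ast\| \le \frac{\mu\sqrt d}{c_{\mathrm{sc}}}$ for a strong-convexity-type constant $c_{\mathrm{sc}}$ depending only on $(\Ac,\vw)$ (and on $\delta$); the $\sqrt d$ enters from $\|\mu I\| = \mu$ versus $\|\mu I\|_F = \mu\sqrt d$, depending on which norm the growth inequality is phrased in. Converting $\|X_k - X^\ast\| \le \|X_k - X_\mu^\ast\| + \|X_\mu^\ast - X^\ast\|$ and using $\|X_k-X_\mu^\ast\| \le R\,(e^{\delta_T(X_k,X_\mu^\ast)}-1) \le 2R\,\epsilon'$ for small $\epsilon'$ (via the elementary bound relating operator-norm distance to Thompson distance on a set of norm $\le R$), I obtain an overall bound $\|X_k - X^\ast\| = O(R\epsilon' + \mu\sqrt d/c_{\mathrm{sc}})$.

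Finally I would translate this into accuracy on the constant $C$. Since $\BL(\Ac,\vw) = \exp(-\tfrac12 F(X^\ast))$ and we output $C = \exp(-\tfrac12 F(X_k))$, I need $|F(X_k) - F(X^\ast)| = O(\epsilon)$ to get $\BL \le C \le (1+\epsilon')\BL$ after rescaling $\epsilon$. Using the smoothness of $F$ on $D_{\delta,\Delta}$ and $\nabla F(X^\ast)=0$ gives $F(X_k)-F(X^\ast) \le \tfrac{1}{\delta}\|X_k-X^\ast\|^2$, so it suffices to drive $\|X_k - X^\ast\|$ below $\sqrt{\delta\epsilon}$; balancing the two error terms, one picks $\mu \asymp \frac{c_{\mathrm{sc}}\sqrt{\delta\epsilon}}{\sqrt d}$ and $\epsilon' \asymp \frac{\sqrt{\delta\epsilon}}{R}$, whence the iteration count becomes $k = O\!\big(\tfrac{1}{\mu\sqrt\delta}\log\tfrac{R\log(\Delta/\delta)}{\sqrt{\delta\epsilon}}\big) = O\!\big(\mathrm{poly}(\tfrac{\sqrt d}{\delta},\tfrac1R,\ldots)\cdot\log\tfrac1\epsilon\big)$, matching the claimed $O\!\big(\log\big(\tfrac1\epsilon,\tfrac{\Delta}{\delta^{3/2}},\sqrt d,\tfrac1R\big)\big)$ form after collecting constants. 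The main obstacle I anticipate is making the bias bound $\|X_\mu^\ast - X^\ast\|$ rigorous: $F$ is only geodesically convex, not Euclidean strongly convex, so I will need either to work with the Riemannian distance and a geodesic strong-convexity constant valid on the compact set $D_{\delta,\Delta}$ (extracted from a uniform lower bound on the Riemannian Hessian of $F$ over $D_{\delta,\Delta}$, which exists by compactness and smoothness), or to exploit that on $D_{\delta,\Delta}$ the Riemannian and Euclidean metrics are bi-Lipschitz equivalent with constants depending on $\delta,\Delta$; either route introduces the additional factor that upgrades $\delta$ to $\delta^{3/2}$ in the final bound, so getting the exact dependence advertised in the statement is the delicate accounting step.
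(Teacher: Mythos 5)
Your skeleton overlaps substantially with the paper's: the same contraction factor $\frac{\gamma}{\gamma+\mu}$ from Lemma~\ref{lem:G4-contract} with $\gamma\le 1/\sqrt{\delta}$, the diameter bound $\log(\Delta/\delta)$ for the initial Thompson distance, the smoothness constant $2/\delta$ of Lemma~\ref{lem:F-smooth}, a Thompson-to-norm conversion (the paper uses Snyder's inequality where you use $\norm{X_k-X_\mu^\ast}\le R(e^{\delta_T}-1)$), and a choice of $\mu$ that balances regularization bias against contraction speed. The genuine divergence, and the genuine gap, is your treatment of the bias: you want $\norm{X_\mu^\ast-X^\ast}\lesssim \mu\sqrt{d}/c_{\mathrm{sc}}$ from a strong-convexity or quadratic-growth constant $c_{\mathrm{sc}}>0$ of $F$ (Euclidean or geodesic) on $D_{\delta,\Delta}$. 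No such constant exists. First, $F$ is not Euclidean convex at all (the terms $w_j\log\det(A_j^\ast X A_j)$ are concave), so the "Euclidean convexity on the compact set" fallback fails outright. Second, even geodesic strong convexity fails: under the feasibility condition $d=\sum_j w_j d'$ one has $F(tX)=F(X)$ for all $t>0$, so $F$ is constant along the geodesic ray $t\mapsto tX^\ast$ through any minimizer, the solution set contains rays (consistent with $G$ being homogeneous, Lemma~\ref{lem:G1-prop}), and the geodesic Hessian has a null direction at $X^\ast$. Hence the argument-space bound $\norm{X_\mu^\ast-X^\ast}=O(\mu)$ cannot be salvaged by compactness alone; one would at least have to quotient out the scaling direction, which you do not do. The paper avoids this entirely: it never compares $X_\mu^\ast$ to $X^\ast$ in argument space, but bounds the regularization bias in \emph{function value}, $F_\mu(X)-F(X)=\mu\trace(X)\le \mu d/(\mu+\tfrac1R)$ as in \eqref{eq:choose-mu}, using only the boundedness of the iterates (Corollary~\ref{lem:bounded}) and the lower bound $G(X)\succeq\delta I$ (Lemma~\ref{lem:G-low-bound}); it then bounds $\vert F_\mu(X_k)-F_\mu(X^\ast)\vert$ by a gradient-norm constant times $\norm{X_k-X^\ast}_F$ (via Weyl's theorem and Snyder's inequality) rather than via $\nabla F_\mu(X^\ast)=0$.

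A second problem is your final accounting. With your choices $\mu\asymp \sqrt{\delta\epsilon}/\sqrt{d}$ and contraction factor $c_\mu=1/(1+\mu\sqrt{\delta})$, the iteration count carries the prefactor $1/(\mu\sqrt{\delta})\asymp \sqrt{d}/(\delta\sqrt{\epsilon})$, which is polynomial in $1/\epsilon$; asserting that this "matches the claimed $O(\log(\cdot))$ form after collecting constants" is not justified, since the advertised complexity is logarithmic in all listed quantities. (The paper handles the analogous factor $\bigl(\log(1/\alpha)\bigr)^{-1}$ as a separate term and argues it is dominated; whatever one thinks of that step, you cannot simply absorb a $\mathrm{poly}(1/\sqrt{\epsilon})$ factor into constants.) Finally, your closing speculation that the quadratic-growth route is what produces the $\delta^{3/2}$ is not how that exponent arises in the paper: it comes from the product $\bigl(\tfrac1\delta+\mu\bigr)\sqrt{d/\delta}$ in the constant $\widehat{C}_1$.
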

\begin{proof}
\noindent Our plan is as follows: We start by analyzing the choice of the parameter $\mu$ and the convergence rate of the regularized approach. This will allow us to develop a non-asymptotic convergence result for problem~\eqref{eq:F}. 

The optimality gap is given by
\begin{align*}
&\vert F(X_k) - F(X^\ast) \vert \\
&\qquad\leq \vert F(X_k) - F_\mu (X_k) \vert + \vert F_\mu (X_k) - F_\mu (X^*) \vert +  \vert F_\mu (X^*)- F(X^*) \vert \; ,
\end{align*}
where the first and third terms represent the \emph{approximation error} of $F_\mu$ and the second term the \emph{convergence rate} of $G_\mu$ in terms of the function values. Therefore, we have to perform the following steps:
\begin{enumerate}
\item bound the approximation error of $F_\mu$, i.e., bound $\vert F(X) - F_\mu (X) \vert$; 
\item characterize the convergence rate of $G_\mu$, i.e., bound $\vert F_\mu(X) - F_\mu (X^\ast) \vert$.
\end{enumerate}

\paragraph{Approximation error of $F_\mu$.}
By construction we have $F_\mu (X) - F(X) = \mu \trace (X)$. Note that at a fixed point $X$, we have
\begin{align*}
\trace (X) = \trace (G_\mu (X)) = \trace \Big( \big( \mu I + \underbrace{\nlsum_{j \in [m]} w_j A_j (A_j^\ast X A_j)^{-1} A_j^\ast	}_{=:G^{-1}(X)}	\big)^{-1} \Big) \; .
\end{align*}
Let $\lbrace \nu_1, \dots, \nu_n \rbrace$ denote the eigenvalues of $G^{-1}(X)$. Then the eigenvalues of $G^{-1}(X) + \mu I$ are $\lbrace \nu_1 + \mu, \dots, \nu_n + \mu \rbrace$, which gives (for a fixed point $X$)
\begin{equation*}
\trace X = \trace \big[ \big(	\mu I + G^{-1}(X)	\big)^{-1} \big] = \sum_{i \in [n]} \frac{1}{\lambda_i \big(	\mu I + G^{-1}(X)	\big)} = \sum_{i \in [n]} \frac{1}{\mu + \nu_i} \; .
\end{equation*}
Let $\nu_{\rm min}$ be the minimal eigenvalue of $G^{-1}(X)$. Then
\begin{equation*}
\trace X = \sum_{i \in [n]} \frac{1}{\mu + \nu_i} \leq \sum_{i \in [n]} \frac{1}{\mu + \nu_{\rm min}} = \frac{d}{\mu + \nu_{\rm min}} \; .
\end{equation*}
Due to the boundedness of $G$ (Lemma~\ref{lem:bounded}), we have that $\nu_{\min} \geq \frac{1}{R}$.
Inserting this above, we obtain the bound
\begin{equation*}
\trace (X) = \frac{d}{\mu + \nu_{\rm min}} \leq \frac{d}{\mu + \frac{1}{R}}  \; ,
\end{equation*}
which in turn implies the bound
\begin{equation}\label{eq:choose-mu}
F_\mu (X) - F(X) = \mu \trace (X) \leq \frac{\mu d}{\mu + \frac{1}{R}} \; .
\end{equation}
This bound gives a means for choosing $\mu$ ($d, \delta, R$ are determined by the input $(\Ac,\vw)$), such that a solution to the regularized problem gives an $\frac{\epsilon}{4}$-accurate solution to the original one. We defer the further discussion to the end of the proof.

\paragraph{Convergence rate of $F_\mu$.}
From Lemma~\ref{lem:G4-contract} we know that $G_\mu$ is log-contractive, i.e., we have
\begin{equation*}
\delta_{\rm T} (\underbrace{G_\mu(X_k)}_{=X_{k+1}},\underbrace{ G_\mu(X^*)}_{=X^*}) \leq \frac{\gamma}{\gamma + \mu} \delta_{\rm T} (X_k, X^*) \; ,
\end{equation*}
which implies that
\begin{equation*}
\delta_{\rm T}(X_{k+1}, X^\ast) \leq  \frac{\gamma}{\gamma + \mu} \delta_{\rm T} (X_k, X^*) \; ,
\end{equation*}
where $\gamma := \max \lbrace \norm{G^{-1}(X_k)}, \norm{G^{-1}(X^*)}		\rbrace$. Recursively, this gives
\begin{align}\label{eq:G4-bound}
\delta_{\rm T} (X_k, X^*) \leq \Big(	\frac{\tilde{\gamma}}{\tilde{\gamma} + \mu}	\Big)^k \delta_{\rm T} (X_0, X^*) \; ,
\end{align}
where we set $\tilde{\gamma} := \max \lbrace \max_{1 \leq i \leq k} \norm{G^{-1}(X_i)}, \norm{G^{-1}(X^*)} \rbrace$, which is bounded as $\frac{1}{R} \leq \tilde{\gamma} \leq \frac{1}{\sqrt{\delta}}$ by construction of the set $D_{\delta,\Delta}$ and the boundedness of $G$.
If $F_\mu$ is $L$-smooth, then the following bound holds:
\begin{equation*}
F_\mu (X^\ast) \leq F_\mu (X_k) + \ip{\nabla F_\mu (X_k)}{X_k - X^\ast}_{\rm F} + \frac{L}{2} \norm{X^* - X_k}_{\rm F}^2 \; .
\end{equation*}
This implies that
\begin{equation*}
    \left\vert F_\mu (X^\ast) - F_\mu (X_k) \right\vert \leq \left\vert  \ip{\nabla F_\mu (X_k)}{X_k - X^\ast}_{\rm F} \right\vert + \frac{L}{2} \norm{X^* - X_k}_{\rm F}^2 \; ,
\end{equation*}
where$\ip{\cdot}{\cdot}_{\rm F}$ denotes the usual Frobenius product $\ip{X}{Y}_{\rm F} = \trace \big( X^* Y \big)$. In order to use this bound in our analysis, we must show that $F_\mu$ is indeed $L$-smooth. To that end, recall that
\begin{align*}
\nabla F_\mu(X) = \nlsum_{j \in [m]}  w_j A_j (A_j^* X A_j)^{-1} A_j^* - X^{-1}+ \mu I \; 
\end{align*}
and let $H_1(X) := \sum_{j \in [m]} w_j A_j \big( A_j^* X A_j\big)^{-1} A_j^* $ and $H_2(X) := - X^{-1}$.
Then $${\rm Lip}(\nabla F_\mu) \leq L$$ if ${\rm Lip}(H_1) + {\rm Lip}(H_2) \leq L$ for some $L$.  For ${\rm Lip}(H_1)$ note that
\begin{align*}
&\nabla \Big( 	\nlsum_{j \in [m]} w_j A_j (A_j^* X A_j)^{-1} A_j^*	\Big) \\
&\qquad= - \nlsum_{j \in [m]} w_j \big[ \big(A_j (A_j^* X A_j)^{-1} A_j^* \big) \otimes \big(A_j (A_j^* X A_j)^{-1} A_j^* \big) \big] \\
&\qquad= - \Big(	\nlsum_{j \in [m]} w_j A_j\big(A_j^* X A_j \big)^{-1} A_j^* \Big) \otimes \Big(	\nlsum_{j \in [m]} w_j A_j\big(A_j^* X A_j \big)^{-1} A_j^* \Big)\\
&\qquad= - G^{-1}(X) \otimes G^{-1}(X) \; ,
\end{align*}
where the second equality follows from the distributive law. Then the construction of $D_{\delta,\Delta}$ and the fact that $G$ is a self-map imply
\begin{align*}
\norm{\nabla H_1(X)} = \norm{G^{-1}(X)}^2 \leq \frac{1}{\delta} \; .
\end{align*}
Furthermore, we have
\begin{align*}
\nabla \big(-X^{-1} \big) = (X^{-1}) \otimes (X^{-1}) \; ,
\end{align*}
which implies that $\norm{\nabla H_2(X)} = \norm{X^{-1}}^2 \leq \frac{1}{\delta}$, again by contruction of $D_{\delta,\Delta}$.
This implies $L=\frac{2}{\delta}$ and therefore Lemma~\ref{lem:F-smooth}.  

With that, the Lipschitz bound above implies
\begin{align*}
\vert F_\mu (X^\ast) - F_\mu (X_k) \vert &\leq \vert  \ip{\nabla F_\mu (X_k)}{X_k - X^\ast}_{\rm F} \vert + \frac{1}{\delta} \norm{X^* - X_k}_{\rm F}^2  \; .
\end{align*}
Hence, for the first term on the right hand side, the
Cauchy--Schwarz inequality implies
\begin{align*}
\vert \ip{\nabla F_\mu (X_k)}{X_k - X^\ast}_{\rm F} \vert &\leq \norm{\nabla F_\mu (X_k)}_{\rm F} \norm{X^* - X_k}_{\rm F} \\
&\leq \sqrt{d} \norm{\nabla F_\mu (X_k)} \norm{X^* - X_k}_{\rm F} \; .
\end{align*}
It remains to derive bounds on (a) $\norm{\nabla F_\mu (X_k)}$ and (b) the convergence rate of $( \norm{X^* - X_k}_{\rm F} )_{k\in \mathbb{N}}$. For (a), we have
\begin{align*}
\norm{\nabla F_\mu (X)} 
&= \norm{\nabla \big( F(X) + \mu \trace(X)		\big)} \\
&= \norm{ \underbrace{\big(\sum_{j \in [m]} w_j A_j (A_j^* X A_j)^{-1} A_j	\big)}_{=G^{-1}(X)} - X^{-1} + \mu I} \; .
\end{align*}
Recall that for $X,Y$ symmetric, Weyl's Perturbation Theorem (see, e.g.,~\citep[ III.2.7]{bhatia}) gives
\begin{equation*}
\norm{X - Y} \leq \max_i \left\vert \lambda_i (X) - \lambda_{n-i+1} (Y) \right\vert \; .
\end{equation*}
With that, we have
\begin{align*}
 \norm{ (	G^{-1}(X) + \mu I	) - X^{-1} }
 &\leq \max_i \vert \lambda_i ( G^{-1}(X) + \mu I) - \lambda_{n-i+1}  (X^{-1}) \vert \\
 &\leq \max \big\{ \lambda_{\max} ( G^{-1}(X) + \mu I	) , \lambda_{\max}  (X^{-1})	\big\} \; .
\end{align*}
Note that $G(X), X \succeq \delta I$ by Lemma~\ref{lem:G-low-bound} and the construction of the set $D_{\delta,\Delta}$. This implies $\lambda_{\min}(G(X)), \lambda_{\min}(X) \geq \delta$ and therefore $$\lambda_{\max}(G^{-1}(X)), \lambda_{\max}(X^{-1}) \leq \frac{1}{\delta} \; .$$ Inserting this above, we get
\begin{align}\label{eq:bound-grad-F}
 \norm{\nabla F_\mu (X)}  
&\leq  \max \big\{  \lambda_{\max} \big( G^{-1}(X) + \mu I	\big) , \lambda_{\max}  (X^{-1})	\big\} \\
&\leq \max \Big\{ \frac{1}{\delta} + \mu, \frac{1}{\delta}	\Big\} =\Big(\frac{1}{\delta} + \mu \Big) =: C_1 \; .
\end{align}
\noindent For (b), quantifying the convergence rate of $\big( \norm{X^* - X_k}_{\rm F} \big)_{k \in \mathbb{N}}$, we
recall an inequality of Snyder that relates the Thompson metric to Schatten norms.
\begin{theorem}[\citep{snyder}, Thm. 3]
Let $X,Y \in \pd_d$ and $\norm{\cdot}_p$ denotes the Schatten $p$-norm. Then:
\begin{align*}
\norm{X-Y}_p &\leq 2^{\frac{1}{p}} \frac{e^{\delta_{\rm T}(X,Y)} - 1}{e^{\delta_{\rm T}(X,Y)}} \max \big\{ \norm{X}_p, \norm{Y}_p \big\} \; .
\end{align*}
\end{theorem}
\noindent Since $\norm{\cdot}_{\rm F}$ is Schatten $2$-norm, we therefore get 
\begin{align}\label{eq:snyder-bound}
\norm{X_k-X^*}_{\rm F} 
&\leq \sqrt{2} \max \big\{ \norm{X_k}_{\rm F}, \norm{X^*}_{\rm F} \big\} \frac{e^{\delta_{\rm T}(X_k,X^*)} - 1}{e^{\delta_{\rm T}(X_k,X^*)}} \\
&\le  \sqrt{2d} \max \big\{ \norm{X_k}, \norm{X^*} \big\} \frac{e^{\delta_{\rm T}(X_k,X^*)} - 1}{e^{\delta_{\rm T}(X_k,X^*)}} \; .
\end{align}
For ease of notation, we introduce the shorthand $\delta_{T,k}:=\delta_{\rm T}(X_k, X^*)$. We have seen above that $G_\mu$ is strictly contractive and that in particular $\delta_{T,k} \leq \big( \frac{\tilde{\gamma}}{\tilde{\gamma} + \mu}		\big)^k \delta_{T,0}$.
If we initialize $X_0 \in D_{\delta,\Delta}$, then the initial distance from the optimum must be bounded due to the boundedness of $G$. Hence, there is some constant $C_2>0$, such that $\delta_{T,0} \leq C_2$. With this, we can analyze the exponential terms in~\eqref{eq:snyder-bound}. From the above inequality and the monotonicity of the exponential function, we have $e^{\delta_{T,k}} \leq e^{\big( \frac{\tilde{\gamma}}{\tilde{\gamma} + \mu}	\big)^k \delta_{T,0}}
= e^{C_2 \alpha^k}$, where we set $\Big( \frac{\tilde{\gamma}}{\tilde{\gamma} + \mu}		\Big)^k =: \alpha^k$.
Inserting this into the Lipschitz bound above, we then obtain
\begin{align*}
\left\vert F_\mu (X^\ast) - F_\mu (X_k) \right\vert 
&\leq \sqrt{d}\big\|\nabla F_\mu (X_k) \big\| \big\|X^* - X_k\big\|_{\rm F} + \frac{1}{\delta} \big\|X^* - X_k\big\|_{\rm F}^2 \\
&\overset{(1)}{\leq} \Big(\sqrt{2} C_1 + \frac{1}{\delta} \Big) \big\|X^* - X_k \big\|_{\rm F} \\
&\overset{(2)}{\leq} \underbrace{ \Big(\sqrt{2} C_1 + \frac{1}{\delta} \Big) \sqrt{\frac{2d}{\delta}} }_{=: \widehat{C}_1} \frac{e^{\delta_{T,k}} -1}{e^{\delta_{T,k}} }\; ,
\end{align*}
where (1) follows since $\Big\{ \Big(\frac{e^{\delta_{T,k}} -1}{e^{\delta_{T,k}} }\Big)^2 \Big\}_k$  decays faster than $\Big\{ \frac{e^{\delta_{T,k}} -1}{e^{\delta_{T,k}} } \Big\}_k$ and (2) follows from Snyder's theorem, and upon noting that
\begin{equation*}
    \max \Big\{ \max_{1 \leq i \leq k} \big\|X_i^{-1} \big\|, \big\|\big(X^*\big)^{-1}\big\| \Big\} \leq \frac{1}{\sqrt{\delta}},
\end{equation*}
by construction of $D_{\delta,\Delta}$.  The constant $\widehat{C}_1$ summarizes the dependency on the input datum $(\Ac,\vw)$ and the regularization parameter $\mu$.

We want to bound the iteration complexity of an $\epsilon'$-accurate solution ($\epsilon' := \frac{\epsilon}{2}$) of $F_\mu$, so let $\epsilon'=\left\vert F_\mu (X^\ast) - F_\mu (X_k) \right\vert$. Then with the above, we have
\begin{equation*}
\epsilon' \leq \widehat{C}_1  \frac{e^{\delta_{T,k}} - 1}{e^{\delta_{T,k}}} \;.
\end{equation*}
Furthermore, with the constants $\alpha$ and $C_2$ as introduced above, we get
\begin{equation*}
\epsilon' \leq \widehat{C}_1 \frac{e^{C_2 \alpha^k} - 1}{e^{C_2 \alpha^k}} = \widehat{C}_1 \Big(	1 - \frac{1}{e^{C_2 \alpha^k}}		\Big)\;. 
\end{equation*}
This can be rewritten as follows:  
\begin{align*}
\frac{\epsilon'}{\widehat{C}_1} \leq  1 - \frac{1}{e^{C_2 \alpha^k}} 
&\Leftrightarrow e^{C_2 \alpha^k} \geq \frac{1}{1 - \frac{\epsilon'}{\widehat{C}_1}} 
 \Leftrightarrow \alpha^k \geq \frac{1}{C_2} \log \Bigg( \frac{1}{1 - \frac{\epsilon'}{\widehat{C}_1}}	\Bigg) \\
&\Leftrightarrow \Bigg(	\frac{1}{\alpha}	\Bigg)^{-k} \geq \frac{1}{C_2} \log \Bigg( \frac{1}{1 - \frac{\epsilon'}{\widehat{C}_1}}	\Bigg) \\
&\Leftrightarrow -k \geq \frac{\log \Bigg(	\frac{1}{C_2} \log \Big( \frac{1}{1 - \frac{\epsilon'}{\widehat{C}_1}}	\Big)	\Bigg)}{\log \Big( \frac{1}{\alpha}	\Big)} \; .
\end{align*}
In summary, we have the following bound on the number of iterations needed to achieve an $\epsilon$-accurate solution.
\begin{equation}
k \leq -\frac{\log \Bigg(	\frac{1}{C_2} \log \Big( \frac{1}{1 - \frac{\epsilon'}{\widehat{C}_1}}	\Big)	\Bigg)}{\log \Big( \frac{1}{\alpha}	\Big)} \; .
\end{equation}
We first evaluate the dependency on $\frac{1}{\epsilon}$.  Assuming $\epsilon$ is small,  we get for the inner term
\begin{equation*}
    - \log \Big( 1 - \frac{\epsilon'}{\widehat{C}_1} \Big) \geq \frac{\epsilon'}{\widehat{C}_1} \; .
\end{equation*}
This implies a $\log \left(\frac{1}{\epsilon} \right)$ dependency as
\begin{equation}
\boxed{
k \leq \frac{\log \left(	\frac{\widehat{C}_1 C_2}{\epsilon'}	\right)}{\log \left( \frac{1}{\alpha}	\right)}
}
\end{equation}

\paragraph{Constants.} The result above depends on three constants $\widehat{C}_1, C_2,\alpha$, which we relate to the complexity of the input $(\Ac,\vw)$, characterized by the constants $d, \delta, \Delta, R$. 

\begin{rmk}\normalfont
As stated earlier, those constants are not necessary mutually independent. In particular, we expect that $R$ may dependent on $\delta$, i.e., a more precise notation would read $R(\delta)$. However, to keep notation simply, we just write $R$. For feasible BL data, we expect $\delta$ to be small and $R \geq 1$.
\end{rmk}

For $\widehat{C}_1$, note that
\begin{align*}
\widehat{C}_1 &= \Big(\sqrt{2} C_1 + \frac{1}{\delta} \Big) \sqrt{\frac{2d}{\delta}} 
\leq 2 \sqrt{\frac{d}{\delta}} \Big( \frac{1}{\delta} + \mu \Big) + \frac{\sqrt{2d}}{\delta^{3/2}} \overset{(1)}{=} O \Big( \sqrt{d}, \frac{1}{\delta^{3/2}},\frac{1}{R}	\Big)
 \; .
\end{align*}
To see (1), recall that to control the approximation error (Eq.~\eqref{eq:choose-mu}), we set $\mu:= \frac{\epsilon' }{2R(d-\epsilon'/2)}$. \footnote{We note that the theoretical arguments presented in this proof apply to any $\mu$, which is small enough in the sense of Eq.~\eqref{eq:choose-mu}. In practise, it can be difficult to determine the parameter $R$, in which case an appropriate $\mu$ needs to be found via grid search.} 
Since $\widehat{C}_1$ enters logarithmically, this introduces a $O \Big( \log \big( \frac{1}{\delta^{3/2}}, \sqrt{d}  ,\frac{1}{R}\big) \Big)$ dependency. 
Furthermore, we have that 
\begin{equation*}
C_2 = \delta_{\rm T} (X_0, X^*) \leq {\rm diam}(D_{\delta,\Delta}) \leq \log \frac{\Delta}{\delta} \; ,
\end{equation*}
since by construction
\begin{equation*}
    {\rm diam}(D_{\delta,\Delta}) = \max_{X,Y \in D_{\delta,\Delta}} \delta_{\rm T} (X,Y) = \log M \big( \Delta I_d / \delta I_d \big) = \log \frac{\Delta}{\delta} \; ,
\end{equation*}
which only depends on the parameters $\delta, \Delta$. 
Finally, we have
\begin{align*}
 \frac{1}{\alpha} =\frac{\tilde{\gamma} + \mu}{\tilde{\gamma}}  \geq \frac{\frac{1}{R} + \mu}{\frac{1}{\sqrt{\delta}}} 
 \geq \sqrt{\delta}\Big( \frac{1}{R} + \frac{\epsilon'}{2Rd} \Big) =  O \Big(\frac{\sqrt{\delta}}{R} \Big) \; .
\end{align*}
This implies that the contribution of the factor $\big(\log \big( \frac{1}{\alpha}\big)\big)^{-1}$ is dominated by the other terms. 

Overall, the regularized approach has a complexity of $O \Big( \log \big( \frac{1}{\epsilon},\frac{\Delta}{\delta^{3/2}} ,\sqrt{d}, \frac{1}{R}\big) \Big)$ with respect to the accuracy $\epsilon$ and $d, \delta,\Delta,R$, which depend on the input datum $(\Ac,\vw)$ only.  We will discuss the dependency on the parameter $\delta,\Delta$ in more detail in the following section.
\end{proof}
%

\subsection{Dependency on the input datum}
\label{sec:delta}
Our convergence guarantees rely on the existence of suitable parameters $\Delta, \delta>0$ for a given Brascamp--Lieb datum $(\Ac,\vw)$, such that the global optimum $X^*$ of Eq.~\eqref{eq:F} is contained in a set of the form $D_{\delta,\Delta}$.  The existence of such a $\delta$ follows from a result in~\citep{tao-paper}, which ensures that the $X^*$ is bounded from above and below. In this section, we recall the argument by~\citep{tao-paper} and highlight its connection to our parameters $\delta,\Delta$. Furthermore, we discuss a conjectured connection to the \emph{subdeterminant maximization} problem. The development of an efficient approach for computing $\delta$ (and hence determining $D_{\delta,\Delta}$ explicitly) is left for future work.\\

\noindent We recall some crucial properties of the optimum $X^*$ from~\citep{tao-paper}. The existence of suitable upper and lower bounds on $X^*$ is implied by~\citep[Proposition~5.2]{tao-paper}, which relies on the following lemma:
\begin{lem}[\citep{tao-paper}, Lemma~5.1]\label{prop:5.1}
Assume that $(\Ac,\vw)$ is feasible, i.e., the conditions in Theorem~\ref{thm:feasible} hold. Then there exist a $c>0$ and an orthonormal basis $\lbrace e_1, \dots, e_d \rbrace$ of $\mathbb{R}^d$ such that there exists an index set $I_j$ with $\vert I_j \vert = \dim(H_j) = \dim (A_j H)$, such that $\sum_{j \in [m]} w_j \vert I_j \cap \lbrace 1, \dots, k \rbrace \vert \leq k \quad \forall \; 0 \leq k \leq d$
and 
\begin{equation}\label{eq:36}
\Big\| \bigwedge_{i \in I_j} A_j e_i \Big\| \geq c \quad \forall \; 1 \leq j \leq m \; .
\end{equation}
Furthermore, 
\begin{equation}\label{eq:37}
\sum_{j \in [m]} w_j \left\vert I_j \cap \lbrace k+1, \dots, d \rbrace \right\vert \leq d-k \quad \forall \; 0 \leq k \leq d \; .
\end{equation}
\end{lem}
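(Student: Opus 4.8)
The plan is to treat this as a structural statement about feasible Brascamp--Lieb data and prove it by induction on the dimension $d$, organized around the lattice of critical subspaces, with the ``simple'' case (Definition~\ref{def:simple}) handled separately by a compactness argument. Throughout write $d_j := \dim(A_j \R^d)$ (so $d_j = d'$ here, since the $A_j$ are surjective), so that $\nlsum_j w_j d_j = d$ by Theorem~\ref{thm:feasible}(1), and put $\phi(V) := \nlsum_j w_j \dim(A_j V) - \dim V$, so that feasibility reads $\phi \ge 0$ with $\phi(\{0\}) = \phi(\R^d) = 0$. The first step is the observation that $V \mapsto \nlsum_j w_j \dim(A_j V)$ is submodular while $V \mapsto \dim V$ is modular, hence $\phi$ is submodular; therefore if $U, W$ are \emph{critical} ($\phi(U) = \phi(W) = 0$) then $0 \le \phi(U \cap W) + \phi(U + W) \le \phi(U) + \phi(W) = 0$, so $U \cap W$ and $U + W$ are critical too. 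Thus the critical subspaces form a sublattice of the subspace lattice of $\R^d$, and any proper nontrivial critical $V$ yields two strictly smaller feasible Brascamp--Lieb data: the (co-)restriction $\big( (A_j|_V)_j, \vw \big)$ on $V$, and the quotient $\big( (\bar A_j)_j, \vw \big)$ on $\R^d / V$ --- the scaling identities split \emph{exactly} because $\phi(V) = 0$, and the subspace inequalities descend to sub- and quotient subspaces.

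For the non-simple inductive step: given a proper nontrivial critical $V$, apply the induction hypothesis to the restriction and to the quotient to obtain ONBs and index sets $(I_j')_j$, $(I_j'')_j$ with the stated properties, take the ONB $e_1, \dots, e_{\dim V}$ from $V$ followed by orthogonal lifts into $V^\perp$ of the quotient ONB, and set $I_j := I_j' \cup \{\dim V + i : i \in I_j''\}$. The covering condition at level $k$ then reduces, for $k \le \dim V$, to the restriction's condition, and for $k > \dim V$ to the restriction's \emph{top} condition --- which is an \emph{equality} precisely because $V$ is critical, so $\nlsum_j w_j \dim(A_j V) = \dim V$ --- stacked on top of the quotient's condition at level $k - \dim V$. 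Bound~\eqref{eq:37} follows symmetrically, or simply by subtracting the first bound from the identity $\nlsum_j w_j |I_j| = \nlsum_j w_j d_j = d$. The wedge bound~\eqref{eq:36} is obtained by factoring $\bigwedge_{i \in I_j} A_j e_i$ through $A_j V$ and a complement of $A_j V$ in the target space, so that its norm is controlled below by a fixed geometric factor times the product of the two sub-bounds; take $c$ to be the minimum over $j$.

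The remaining case --- $(\Ac, \vw)$ \emph{simple}, i.e.\ $\phi(V) > 0$ for every $0 \subsetneq V \subsetneq \R^d$ --- is where I expect the real work. Since $V \mapsto \dim(A_j V)$ is lower semicontinuous, $\phi$ is lower semicontinuous and hence attains a positive minimum on each compact Grassmannian $\mathrm{Gr}(r, \R^d)$, $1 \le r \le d-1$; this upgrades strict feasibility to a uniform slack $\phi(V) \ge \epsilon_0 > 0$ over all proper nontrivial $V$. Choosing the ONB $e_1, \dots, e_d$ in general position makes each linear matroid $M_j$ on the ground set $[d]$ (that of the vectors $A_j e_1, \dots, A_j e_d$) uniform of rank $d_j$, so $\bigwedge_{i \in S} A_j e_i \ne 0$ for every $S$ with $|S| = d_j$, which already gives~\eqref{eq:36} with $c := \min_j \min_{|S| = d_j} \big\| \bigwedge_{i \in S} A_j e_i \big\| > 0$. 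It remains to pick a $d_j$-subset $I_j$ of $[d]$ for each $j$ with $\nlsum_{j : i \in I_j} w_j = 1$ for every $i \in [d]$ --- note that this single balance condition is, given $\nlsum_j w_j d_j = d$, equivalent to \emph{both} covering bounds (add them over complementary index intervals). Setting $z_{ji} := w_j$ when $i \in I_j$ and $0$ otherwise turns the search for $(I_j)_j$ into a feasibility question for a transportation polytope with row sums $w_j d_j$, column sums $1$, and box constraints $0 \le z_{ji} \le w_j$; feasibility follows from $\nlsum_j w_j d_j = d$ together with $\nlsum_j w_j \ge 1$ (itself a consequence of feasibility applied to $H = \R^d$, since $d_j \le d$), and total unimodularity of the bipartite incidence matrix produces an integral vertex, i.e.\ honest index sets $I_j$.

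The main obstacle, I expect, is making the gluing in the non-simple step genuinely rigorous: one must verify that restriction and quotient data are feasible with compatible scaling constants (this uses the submodularity identity at $V$), that the covering inequalities stack correctly across the split (this uses that criticality leaves no slack at level $\dim V$), and --- least routine of all --- that the wedge-norm lower bound survives the factorization of $A_j$ through $A_j V$, which amounts to controlling the angle between $A_j V$ and the image under $A_j$ of the lifted quotient directions. The simple-case compactness bound and the transportation-polytope feasibility are the other two spots needing care, but both are standard once set up.
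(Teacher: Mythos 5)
First, a point of reference: the paper does not prove this lemma at all --- it is quoted (with attribution) from \citet{tao-paper}, so there is no in-paper argument to match. In the source the selection is done directly: for a given ONB one picks $I_j$ greedily by a rank computation (so that $|I_j\cap\{k+1,\dots,d\}|$, or in the mirrored variant $|I_j\cap\{1,\dots,k\}|$, equals $\dim\big(A_j\,\mathrm{span}(e_{k+1},\dots,e_d)\big)$, resp.\ $\dim\big(A_j\,\mathrm{span}(e_1,\dots,e_k)\big)$), the counting bound then falls straight out of the feasibility condition of Theorem~\ref{thm:feasible}(2), and the uniform constant $c$ comes from a compactness argument over orthonormal bases; no induction over critical subspaces and no polytope rounding is involved. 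Your submodularity/critical-lattice induction is legitimate BCCT-style structure theory, but it is not needed for this statement, and your own ``least routine'' step (the angle control needed to glue the wedge bounds across a critical subspace) is left unproved.

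The decisive gap, however, is in your simple case. Your reduction is correct: given $|I_j|=d'$ and the scaling identity $\sum_j w_j d'=d$, imposing \emph{both}~\eqref{eq:36}-side counting bounds, i.e.\ the head bounds and~\eqref{eq:37}, forces the exact balance $\sum_{j:\,i\in I_j}w_j=1$ for every $i$. But the proposed construction of such $I_j$ fails twice over. (i) The rounding step is unjustified: the bipartite incidence matrix is totally unimodular, but your right-hand sides and box bounds ($w_jd_j$, $1$, $w_j$) are not integral, so a vertex need not have $z_{ji}\in\{0,w_j\}$; rescaling $z_{ji}=w_jy_{ji}$ makes the row sums integral but turns the column constraints into $\sum_j w_j y_{ji}=1$, whose coefficients destroy total unimodularity. (ii) No repair is possible, because the balance condition is genuinely unachievable for some feasible, simple data: take $d=2$, $d'=1$, $m=3$, $\vw=(0.9,\,0.9,\,0.2)$, and generic surjective $A_j:\R^2\to\R$ with pairwise distinct kernels. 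Then $\sum_j w_j d'=2$ and every subspace inequality holds strictly, so the datum is feasible and simple; yet with $|I_j|=1$ one would have to split $\{0.9,0.9,0.2\}$ into two groups each summing to $1$, which is impossible. So your simple case cannot be completed as planned --- and the same example shows that the statement as transcribed here (both families of counting inequalities for a single $I_j$ of size $d'$) cannot be read literally: in the source each counting bound is attached to its own (greedy, ordering-adapted) choice of index sets, not imposed simultaneously on one selection. Your proof attempt therefore stalls at exactly the point where the transcription overstates the conclusion.
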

The connection to our parameters $\delta,\Delta$ follows from a crucial property, which is implicitly established in the proof of~\citep[Proposition~5.2]{tao-paper}:
\begin{cor}\label{cor:delta}
Let $\big(	\Ac, \vw \big)$ denote a simple input datum.
Then the optimizer $X^*$ for problem~\eqref{eq:F} is bounded from above and below, with both bounds depending on the input datum only.
\end{cor}
In short, the argument is as follows: We again consider Gaussian inputs $(Z_j)_{j \in [m]}$ and set $\sum_{j \in [m]} w_j \big(	A_j^* Z_j A_j 	\big) =: X$.\footnote{Note that here, again, we perform a variable change as in Theorem~\ref{thm:maximizer} to be consistent with our geodesically convex formulation (problem~\eqref{eq:F}).} By Lemma~\ref{prop:5.1},  this transformation is self-adjoint, since the matrices $\big( A_j \big)_{j \in [m]}$ are non-degenerate and $w_j >0$.   Therefore, we can diagonalize $X= P^{-1} {\rm diag}\big(\lambda_1, \dots, \lambda_d \big) P$, where $\lambda_d >0$.  As a consequence, there must exists a $\delta>0$, such that $\lambda_d \geq \delta$. 

Furthermore, from Eq.~\eqref{eq:36}, we see that $\lbrace A_j e_i \rbrace_{i \in I_j}$ forms a basis of $H_j$ with a lower bound on degeneracy. Therefore
\begin{equation*}
    \det (Z_j) \leq C \prod_{i \in I_j} \lambda_i
\end{equation*}
for some $C>0$, which depends only on the input datum. This further implies
\begin{align*}
    \prod_{j \in [m]} \big( \det Z_j \big)^{w_j} 
    &\leq C \prod_{i \in [d]} \lambda_i^{\sum_{j \in [m]} w_j \left\vert I_j \cap \lbrace i \rbrace \right\vert} \\
    &\overset{(1)}{\leq} C \lambda_1^d \prod_{0 \leq k \leq d-1} \Big( \frac{\lambda_{k+1}}{\lambda_k} \Big)^{\sum_{j \in [m]} w_j \left\vert I_j \cap \lbrace k+1, \dots, d \rbrace \right\vert} \\
    &\overset{(2)}{\leq} C \lambda_1^d \prod_{0 \leq k \leq d-1} \Big( \frac{\lambda_{k+1}}{\lambda_k} \Big)^{d-k} \\
    &\overset{(3)}{\leq} C \lambda_1 \cdot \dots \cdot \lambda_k \; ,
\end{align*}
where (1) follows from telescoping and applying the scaling condition (Theorem~\ref{thm:feasible}(2)), (2) from Eq.~\eqref{eq:37} and (3) from reverse telescoping.
Since we have restricted ourselves to simple BL input data (see Theorem~\ref{thm:maximizer}), Eq.~\eqref{eq:37} holds with inequality (see also Definition~\ref{def:simple}). A refinement of the above argument then implies that
\begin{equation}\label{eq:38}
    \prod_{j \in [m]} \big( \det (Z_j) \big)^{w_j} \leq C \det(X) \prod_{1 \leq k \leq d-1} \Big( \frac{\lambda_{k+1}}{\lambda_k} \Big)^{c} \; ,
\end{equation}
for $c>0$ as in Lemma~\ref{prop:5.1}, which again depends only on the input datum. This implies that Eq.~\eqref{eq:BL-lieb} goes to zero as $\frac{\lambda_d}{\lambda_1}$ goes to zero. Hence, the supremum (Eq.~\eqref{eq:BL-sup}) must lie in a region with $\lambda_1 \leq C \lambda_d$. This implies upper and lower bounds on $X^*$ and thus the existence of a compact convex set of the form $D_{\delta,\Delta}$ that contains $X^*$.

To implement our proposed algorithm, we need to find a good initialization $X_0 \in D_{\delta,\Delta}$, which requires the explicit computation of $\delta$ and $\Delta$.  We leave this as an open problem.\\

\noindent We conjecture that there is a close relation between our $\delta$ and the parameter $c$ in Theorem~\ref{prop:5.1} (via Eq.~\eqref{eq:36}), which is characterized by the bit complexity of the input datum $(\Ac, w)$.  Notably,  $c$ is closely related to solving a constrained subdeterminant maximization problem over the set of linear transforms. This observation may serve as a starting point for future investigation into the computation of the parameter $\delta$. Given $\delta$ and $c$, the parameter $\Delta$ could then be inferred from Eq.~\eqref{eq:38}.

To see that $c$ is closely related to subdeterminant maximization, note that with an orthonormal basis and the index sets $I_j$ in Theorem~\ref{prop:5.1}, we have
\begin{align}
\ip{Z_j A_j e_i}{A_j e_i}_{H_j} = \ip{e_i}{A_j^* Z_j A_j e_i}_H \leq \frac{1}{w_j} \ip{e_i}{X e_i}_H = \frac{\lambda_i}{w_j} \; .
\end{align}
Furthermore, we have with Eq.~\eqref{eq:36} that $\big( A_j e_i	\big)_{i \in I_j}$ forms a basis of $H_j$ with a lower bound on degeneracy, characterized by $c$. 
Recall that $c$ is defined,  such that we can find for each $j \in [m]$ an index set $I_j \subseteq \lbrace 1, \dots, d \rbrace$ with $\vert I_j \vert = d_j$, such that 
\begin{equation*}
\Big\| \bigwedge_{i \in I_j} A_j e_i \Big\|_{d_j} > c \; .
\end{equation*}
Ignoring for now the complexity of finding such an index set,  we consider first the case where the $A_j$ are vectors $\big(	 a_1^j, \dots, a_d^j	\big)$, i.e. $\vert I_j \vert =1$. 
Then $\norm{a_j e_i} \geq \vert a_i^j \vert$
and therefore $c = \min_{j \in [m]} \vert a_i^j \vert = \min_{j \in [m]} \norm{a^j}_\infty$, i.e., each $A_j$ has an entry with absolute value at least $c$.  In the general case $A_j: \reals^d \rightarrow \reals^{d_j}$ we have (${\rm row}_k A$ indicating the $k$th row of $A$)
\begin{align*}
\Big\| \bigwedge_{i \in I_j} A_j e_i \Big\|_{d_j} &= \Big\|A_j e_{i_1} \wedge \dots \wedge A_j e_{i_{d_j}}\Big\|_{d_j} \\
&= \Big\| {\rm row}_{i_1} A_j \wedge \dots \wedge {\rm row}_{d_j} A_j \Big\|_{d_j}  \\
&= \Big\|	\det \big(	({\rm row}_{i_1} A_j)^T, \dots, ({\rm row}_{d_j} A_j)^T 	\big)	e_{i_1} \wedge \dots \wedge e_{i_{d_j}}	\Big\|_{d_j} \\
&= \left\vert 	\det \big(	({\rm row}_{i_1} A_j)^T, \dots, ({\rm row}_{d_j} A_j)^T 	\big) \right\vert \; .
\end{align*}
For each $j$, this translates to an optimization problem of the form
\begin{equation}\label{eq:subdet}
\max_{\vert I \vert = d_j} \left\vert \det \big( (A_j)_I \big) \right\vert \; ,
\end{equation}
which corresponds to a constrained \emph{subdeterminant maximization}.  For computing $c$, we simply take the minimum of all subdeterminant maximizers, i.e.
\begin{align*}
c := \min_{j \in [m]} \max_{\vert I \vert = d_j} \left\vert \det \big( (A_j)_I \big) \right\vert \; .
\end{align*}

Even the reduced problem~\eqref{eq:subdet}, which ignores the combinatorial constrained on the chosen index set $I_j$ is known to be NP-hard.  An approximation algorithm for the reduced problem, which achieves accuracy up to a factor of $O(e^{d'})$ was given by~\citep{nikolov}:
\begin{theorem}[\citep{nikolov}, Theorem~3]
There exists a deterministic polynomial time algorithm, which approximates problem~\eqref{eq:subdet} within a factor of $e^{d' + o(d')}$.
\end{theorem}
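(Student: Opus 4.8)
The plan is to follow the standard convex-relaxation-plus-rounding paradigm for subdeterminant maximization. Writing $a_1,\dots,a_d \in \mathbb{R}^{d'}$ for the columns of $A_j$, note that for $|I| = d'$ one has $|\det((A_j)_I)|^2 = \det\bigl(\sum_{i \in I} a_i a_i^*\bigr)$, so Problem~\ref{eq:subdet} is equivalent to maximizing $\log\det\bigl(\sum_{i\in I} a_i a_i^*\bigr)$ over size-$d'$ subsets $I$. First I would introduce the continuous relaxation
\begin{equation*}
\text{maximize} \quad \log\det\Bigl(\sum_{i=1}^d x_i\, a_i a_i^*\Bigr) \qquad \text{subject to} \quad \sum_i x_i = d', \ \ 0 \le x_i \le 1 .
\end{equation*}
Its objective is concave (the map $Y \mapsto \log\det Y$ is concave on $\pd_{d'}$, precomposed here with a linear map), so this is a concave maximization over a polytope with $O(d)$ facets, solvable to any prescribed additive accuracy in time polynomial in $d$ and the bit length of $A_j$ by the ellipsoid or an interior-point method; call the optimal value $\Lambda^*$. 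Since every $0/1$ vector supported on a $d'$-subset is feasible, $\Lambda^* \ge 2\log\bigl(\max_{|I|=d'}|\det((A_j)_I)|\bigr)$, so $e^{\Lambda^*/2}$ is a valid upper bound on $c$ restricted to the block $j$.

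The heart of the argument is the rounding step. Given an (approximately) optimal fractional $x^*$, I would view it as the parameter of a determinantal-type sampling distribution: after a standard truncation of the coordinates of $x^*$ that are too close to $1$, sample a size-$d'$ set $S$ with probability proportional to $\det\bigl(\sum_{i\in S} a_i a_i^*\bigr)\prod_{i\in S}\theta_i$ for suitable weights $\theta_i$ built from $x^*$. Using the Cauchy--Binet formula together with the identity $\sum_{|S|=d'}\det\bigl(\sum_{i\in S} a_i a_i^*\bigr)\prod_{i\in S}\theta_i = e_{d'}\bigl(\text{spectrum of }\sum_i \theta_i a_i a_i^*\bigr)$, one lower-bounds $\mathbb{E}\bigl[\det(\sum_{i\in S} a_i a_i^*)\bigr]$ in terms of $\det(\sum_i x_i^* a_i a_i^*) = e^{\Lambda^*}$; the only loss is a Stirling-type factor comparing $d'!$ with the binomially weighted sum, which yields exactly the $e^{2d'+o(d')}$ overhead on the determinant, i.e. $e^{d'+o(d')}$ on $|\det|$. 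Finally I would derandomize by the method of conditional expectations: conditioned on a partial set $T$ already being included, the conditional expectation of $\det(\sum_{i\in S} a_i a_i^*)$ is again a ratio of elementary symmetric polynomials of spectra of projected, rank-one-updated matrices, hence computable exactly in polynomial time, so one greedily constructs a deterministic $S$ attaining the bound. Taking the minimum of the resulting values over $j\in[m]$ gives the claimed approximation to $c = \min_{j\in[m]}\max_{|I|=d'}|\det((A_j)_I)|$.

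The main obstacle is the rounding analysis: pinning down a relaxation whose integrality gap is genuinely $e^{d'+o(d')}$ (a naive relaxation can have a worse or unclear gap), and verifying that the associated sampling measure --- a strongly Rayleigh / determinantal distribution --- simultaneously admits the expectation lower bound above and supports an efficient \emph{exact} conditional-expectation evaluation for derandomization; the $o(d')$ in the exponent is precisely the slack from the Stirling estimate. Once this machinery is in place, the polynomial running time and the $e^{d'+o(d')}$ guarantee follow by routine bookkeeping.
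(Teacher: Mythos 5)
This statement is not proved in the paper at all: it is an external result quoted from Nikolov (his Theorem~3), so there is no in-paper argument to compare against. Your sketch is, in outline, a faithful reconstruction of the cited source's own proof: the concave $\log\det$ relaxation over $\{x\in[0,1]^d:\ \sum_i x_i=d'\}$, rounding by a determinantal (strongly Rayleigh) measure built from the fractional optimum, an analysis via Cauchy--Binet and elementary symmetric polynomials, and derandomization by conditional expectations. So the approach is the right one and matches the known proof.

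The one substantive issue is that you flag the rounding/integrality-gap estimate as ``the main obstacle'' and leave it open, whereas it is in fact the short part of the argument, and closing it requires none of the extra machinery you invoke. Take $\theta_i=x_i^*$ (no truncation of coordinates near $1$ is needed). By Cauchy--Binet, $\sum_{|S|=d'}\prod_{i\in S}x_i^*\,\det\bigl((A_j)_S\bigr)^2=\det\bigl(\sum_i x_i^*\,a_ia_i^*\bigr)=e^{\Lambda^*}$, while the total weight satisfies $\sum_{|S|=d'}\prod_{i\in S}x_i^*=e_{d'}(x^*)\le \bigl(\sum_i x_i^*\bigr)^{d'}/d'!=(d')^{d'}/d'!\le e^{d'}$ by AM--GM. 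Hence some $S$ in the support has $\prod_{i\in S}x_i^*\,\det\bigl((A_j)_S\bigr)^2\ge e^{-d'}e^{\Lambda^*}$, and since $\prod_{i\in S}x_i^*\le 1$ and $e^{\Lambda^*}\ge \max_{|I|=d'}\det\bigl((A_j)_I\bigr)^2$, this gives $\bigl|\det\bigl((A_j)_S\bigr)\bigr|\ge e^{-d'/2}\max_{|I|=d'}\bigl|\det\bigl((A_j)_I\bigr)\bigr|$, comfortably within the stated $e^{d'+o(d')}$; the $o(d')$ slack absorbs the error from solving the convex program only approximately and from the derandomization. The conditional expectations needed for the greedy derandomization are indeed exactly computable (they are again Cauchy--Binet sums, i.e.\ coefficients of characteristic polynomials of rank-one-updated matrices), as in Nikolov. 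With that estimate filled in, your sketch is essentially the cited proof; as written, it stops just short of its quantitative core.
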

\noindent For a detailed analysis of the algorithmic aspects of this problem and Nikolov's approach, see also~\citep{vishnoi_subdet}.  An interesting direction for future work is the question of whether a suitable approximation algorithm for the parameter $c$ can be found, which, in turn, may provide insight into explicit constructions of the constants $\delta, \Delta$.

\section{Other Picard iterations for computing Brascamp--Lieb constants}\label{sec:other-picard}
This paper analyzes a specific Picard iteration, generated by the map $G$, for computing Brascamp--Lieb constants and analyzes its convergence via the Thompson part metric. We note that neither the choice of the Picard iteration, nor the specific Finslerian lens employed in the analysis is unique. Our particular choice is motivated by the observation that one can establish \emph{strict contractivity} of $G$ with respect to the Thompson part metric, which allows for developing \emph{non-asymptotic} convergence guarantees. In this section we discuss one other Picard iteration that arises from problem~\eqref{eq:F} and establish asymptotic convergence. 

Consider the map
\begin{equation}\label{eq:G-tilde}
    \tilde{G} := \frac{G}{\norm{G}} \; ,
\end{equation}
where $\norm{\cdot}$ may denote any matrix norm. We want to analyze $\tilde{G}$ with respect to the \emph{Hilbert projective metric}, which is given by
\begin{equation}
    \delta_{\rm H}(X,Y) := \log \big( M(X/Y) M(Y/X)	\big),
\end{equation}
where again $M(X/Y) := \inf \lbrace	\lambda > 0 : X \preceq \lambda Y	\rbrace$. We note the close relationship between the Thompson part metric $\delta_{\rm T}$ (see Definition~\ref{def:thompson}) and the Hilbert projective metric $\delta_{\rm H}$. Both metrics induce a Finslerian geometry on the manifold of positive definite matrices $\pd_d$.

One can show that $\tilde{G}$ is non-expansive and asymptotically regular with respect to $\delta_{\rm H}$ and that it has a (not necessary unique) fixed point in the compact set $\tilde{D}:= \{ X: \frac{X}{\norm{X}} \}$. Asymptotic convergence of $\tilde{G}$ is then guaranteed by a Hilbert geometry version of Theorem~\ref{thm:s-h}. Such a result can be obtained by specializing the more general Banach space version of ~\citep[Theorem 1.2]{baillon_asymptotic_1978} to Hilbert geometry.

We can show that the Picard iteration defined by the map $\tilde{G}$ converges to the same fixed-point as that of the map $G$: Let $X^*$ denote a fixed point of $\tilde{G}$. Then $G(X^*)=\lambda X^*$ for some $\lambda >0$. Eq.~\ref{eq:G-tilde} together with Corollary~\ref{cor:delta} imply that $\lambda=1$, which gives the claim.

It remains an open question to show, whether $\tilde{G}$ is strictly contractive with respect to the Hilbert projective norm. This would allow to adapt our non-asymptotic convergence analysis to the map $\tilde{G}$ as well.

\section{Discussion}\label{sec:discussion}
In this paper, we introduced a novel fixed-point approach for computing Brascamp--Lieb constants, which is grounded in nonlinear Perron--Frobenius theory.  In contrast to much of the prior literature, which has analyzed the problem through a Riemannian lens, our approach utilizes a Finslerian geometry on the manifold of symmetric, positive definite matrices, which arises from the Thompson part metric. To the best of our knowledge, this introduces a novel geometric perspective on the problem of computing Brascamp--Lieb constants for simple input data.

We establish convergence of our proposed fixed-point approach and present a full convergence analysis for a regularized variant. Importantly, we show that the regularized approach attains a non-asymptotic convergence rate of $O \Big( \log \big( \frac{1}{\epsilon},\frac{\Delta}{\delta^{3/2}} ,\sqrt{d}, \frac{1}{R}\big) \Big)$, where $d, \delta,\Delta,R$ depend on the size and structure of the input datum only. 
A shortcoming of the present approach lies in the difficulty of characterizing the structural dependency on the input datum more explicitly. In particular, while~\citep[Proposition 5.2]{tao-paper} guarantees the existence of suitable $\delta, \Delta>0$ for each (feasible, simple) Brascamp--Lieb datum, we were unable to develop an approach for explicitly computing their values. Access to suitable $\delta,\Delta$ would allow us to find a good initialization for our proposed algorithm and therefore render our approach into a practical method. In addition, we expect the parameter $R$ to depend on $\delta$, but were unable to derive a precise relation. The mutual dependency of the parameters $d, \delta, \Delta, R$ is left for future work.

In future work, we hope to further investigate approaches for computing $\delta, \Delta$ explicitly. In addition, we hope to apply the techniques presented here to geodesically convex optimization problems similar to ~\eqref{eq:F}, such as the operator scaling problem~\citep{garg2018algorithmic} and difference of convex functions~\citep{sra-weber-2022}.
Other avenues for future investigation include practical applications of our fixed-point approach.  Due to the strong connections of Brascamp--Lieb inequalities to important questions in machine learning and information theory, this approach could be of wider interest.

More generally, we hope that the Finslerian lens on fixed point iterations provides a new perspective on the problem of computing Brascamp--Lieb constants. We believe that the tools developed in this work can be applied to a wider class of Picard iterations that arise in the context of Brascamp--Lieb constants (such as those described in section~\ref{sec:other-picard}).

\subsection*{Acknowledgments}
This project was started during a visit of MW to MIT, supported by an Amazon Research Award. Part of this work was done while MW visited the Simons Institute for the Theory of Computing in Berkeley, CA, supported by a Simons-Berkeley Research Fellowship. SS acknowledges support from an NSF-CAREER award (1846088).\\

\noindent The authors thank Terence Tao, Brian Lins and two anonymous reviewers for helpful comments.

\setlength{\bibsep}{3pt}
\bibliographystyle{plainnat}
\bibliography{ref}

\begin{thebibliography}{39}
\providecommand{\natexlab}[1]{#1}
\providecommand{\url}[1]{\texttt{#1}}
\expandafter\ifx\csname urlstyle\endcsname\relax
  \providecommand{\doi}[1]{doi: #1}\else
  \providecommand{\doi}{doi: \begingroup \urlstyle{rm}\Url}\fi

\bibitem[Absil et~al.(2009)Absil, Mahony, and Sepulchre]{absil2009optimization}
Pierre-Antoine Absil, Robert Mahony, and Rodolphe Sepulchre.
\newblock \emph{Optimization Algorithms on Matrix Manifolds}.
\newblock Princeton University Press, 2009.

\bibitem[Allen-Zhu et~al.(2018)Allen-Zhu, Garg, Li, Oliveira, and
  Wigderson]{allen-zhu}
Zeyuan Allen-Zhu, Ankit Garg, Yuanzhi Li, Rafael Oliveira, and Avi Wigderson.
\newblock Operator scaling via geodesically convex optimization, invariant
  theory and polynomial identity testing.
\newblock \emph{Proceedings of the 50th Annual ACM SIGACT Symposium on Theory
  of Computing - STOC 2018}, 2018.

\bibitem[Baillon et~al.(1978)Baillon, Bruck, and
  Reich]{baillon_asymptotic_1978}
J~B Baillon, R~E Bruck, and S~Reich.
\newblock On the asymptotic behavior of nonexpansive mappings and semigroups in
  {Banach} {Spaces}.
\newblock \emph{Houston Journal of Mathematics}, 4\penalty0 (1), 1978.

\bibitem[Barthe(1998)]{barthe}
Franck Barthe.
\newblock On a reverse form of the {B}rascamp-{L}ieb inequality.
\newblock \emph{Inventiones mathematicae}, 134\penalty0 (2):\penalty0 335--361,
  1998.

\bibitem[Barthe and Huet(2008)]{barthe2008gaussian}
Franck Barthe and Nolwen Huet.
\newblock On {G}aussian {B}runn-{M}inkowski inequalities.
\newblock \emph{arXiv preprint arXiv:0804.0886}, 2008.

\bibitem[Bennett et~al.(2007)Bennett, Carbery, Christ, and Tao]{tao-paper}
Jonathan Bennett, Anthony Carbery, Michael Christ, and Terence Tao.
\newblock The {B}rascamp–{L}ieb inequalities: {F}initeness, {S}tructure and
  {E}xtremals.
\newblock \emph{Geometric and Functional Analysis}, 17\penalty0 (5):\penalty0
  1343–1415, Aug 2007.

\bibitem[Bhatia(2013)]{bhatia}
Rajendra Bhatia.
\newblock \emph{Matrix analysis}, volume 169.
\newblock Springer Science \& Business Media, 2013.

\bibitem[Boumal et~al.(2014)Boumal, Mishra, Absil, and
  Sepulchre]{boumal_manopt_2014}
Nicolas Boumal, Bamdev Mishra, P.-A. Absil, and Rodolphe Sepulchre.
\newblock Manopt, a {Matlab} {Toolbox} for {Optimization} on {Manifolds}.
\newblock \emph{Journal of Machine Learning Research}, 15\penalty0
  (42):\penalty0 1455--1459, 2014.
\newblock ISSN 1533-7928.

\bibitem[Brascamp and Lieb(1976)]{BL1}
Herm~Jan Brascamp and Elliott~H. Lieb.
\newblock Best constants in {Y}oung's inequality, its converse, and its
  generalization to more than three functions.
\newblock \emph{Advances in Mathematics}, 20\penalty0 (2):\penalty0 151 -- 173,
  1976.

\bibitem[Brascamp et~al.(1974)Brascamp, Lieb, and Luttinger]{BL2}
Herm~Jan Brascamp, Elliott~H. Lieb, and Joaquin~Mazdak Luttinger.
\newblock A general rearrangement inequality for multiple integrals.
\newblock \emph{Journal of Functional Analysis}, 17\penalty0 (2):\penalty0 227
  -- 237, 1974.

\bibitem[B{\"u}rgisser et~al.(2018)B{\"u}rgisser, Franks, Garg, Oliveira,
  Walter, and Wigderson]{burgisser2018efficient}
Peter B{\"u}rgisser, Cole Franks, Ankit Garg, Rafael Oliveira, Michael Walter,
  and Avi Wigderson.
\newblock Efficient algorithms for tensor scaling, quantum marginals, and
  moment polytopes.
\newblock In \emph{2018 IEEE 59th Annual Symposium on Foundations of Computer
  Science (FOCS)}, pages 883--897. IEEE, 2018.

\bibitem[B{\"u}rgisser et~al.(2019)B{\"u}rgisser, Franks, Garg, Oliveira,
  Walter, and Wigderson]{burgisser2019towards}
Peter B{\"u}rgisser, Cole Franks, Ankit Garg, Rafael Oliveira, Michael Walter,
  and Avi Wigderson.
\newblock Towards a theory of non-commutative optimization: Geodesic 1st and
  2nd order methods for moment maps and polytopes.
\newblock In \emph{2019 IEEE 60th Annual Symposium on Foundations of Computer
  Science (FOCS)}, pages 845--861. IEEE, 2019.

\bibitem[Carlen and Cordero-Erausquin(2009)]{carlen2009subadditivity}
Eric~A Carlen and Dario Cordero-Erausquin.
\newblock Subadditivity of the entropy and its relation to {B}rascamp--{L}ieb
  type inequalities.
\newblock \emph{Geometric and Functional Analysis}, 19\penalty0 (2):\penalty0
  373--405, 2009.

\bibitem[Dvir et~al.(2018)Dvir, Garg, Oliveira, and Solymosi]{dvir2016rank}
Zeev Dvir, Ankit Garg, Rafael Oliveira, and J{\'o}zsef Solymosi.
\newblock Rank bounds for design matrices with block entries and geometric
  applications.
\newblock \emph{Discrete Analysis}, 5:\penalty0 24, 2018.

\bibitem[Ebrahimi et~al.(2017)Ebrahimi, Straszak, and Vishnoi]{vishnoi_subdet}
Javad~B. Ebrahimi, Damian Straszak, and Nisheeth~K. Vishnoi.
\newblock Subdeterminant maximization via nonconvex relaxations and
  anti-concentration.
\newblock In \emph{2017 IEEE 58th Annual Symposium on Foundations of Computer
  Science (FOCS)}, pages 1020--1031, 2017.
\newblock \doi{10.1109/FOCS.2017.98}.

\bibitem[Finsler(1918)]{Finsler1918}
Paul Finsler.
\newblock \emph{Über Kurven und Flächen in allgemeinen Räumen}.
\newblock 1918.
\newblock URL \url{http://eudml.org/doc/203576}.

\bibitem[Franks(2018)]{franks2018operator}
Cole Franks.
\newblock Operator scaling with specified marginals.
\newblock In \emph{Proceedings of the 50th Annual ACM SIGACT Symposium on
  Theory of Computing}, pages 190--203, 2018.

\bibitem[Garg et~al.(2015)Garg, Gurvits, Oliveira, and Wigderson]{garg}
Ankit Garg, Leonid Gurvits, Rafael Oliveira, and Avi Wigderson.
\newblock {O}perator {S}caling: {T}heory and {A}pplications.
\newblock \emph{Foundations of Computational Mathematics}, pages 1--68, 2015.

\bibitem[Garg et~al.(2018)Garg, Gurvits, Oliveira, and
  Wigderson]{garg2018algorithmic}
Ankit Garg, Leonid Gurvits, Rafael Oliveira, and Avi Wigderson.
\newblock Algorithmic and optimization aspects of brascamp-lieb inequalities,
  via operator scaling.
\newblock \emph{Geometric and Functional Analysis}, 28\penalty0 (1):\penalty0
  100--145, 2018.

\bibitem[G{\'o}rnicki(2019)]{gornicki2019remarks}
Jaros{\l}aw G{\'o}rnicki.
\newblock Remarks on asymptotic regularity and fixed points.
\newblock \emph{Journal of Fixed Point Theory and Applications}, 21:\penalty0
  1--20, 2019.

\bibitem[Gurvits(2004)]{gurvits}
Leonid Gurvits.
\newblock Classical complexity and quantum entanglement.
\newblock \emph{Journal of Computer and System Sciences}, 69\penalty0
  (3):\penalty0 448 -- 484, 2004.
\newblock ISSN 0022-0000.
\newblock Special Issue on STOC 2003.

\bibitem[Hardt and Moitra(2013)]{pmlr-v30-Hardt13}
Moritz Hardt and Ankur Moitra.
\newblock Algorithms and hardness for robust subspace recovery.
\newblock In Shai Shalev-Shwartz and Ingo Steinwart, editors, \emph{Proceedings
  of the 26th Annual Conference on Learning Theory}, volume~30 of
  \emph{Proceedings of Machine Learning Research}, pages 354--375, Princeton,
  NJ, USA, 12--14 Jun 2013. PMLR.

\bibitem[Ishikawa(1976)]{ishikawa1976fixed}
Shiro Ishikawa.
\newblock Fixed points and iteration of a nonexpansive mapping in a banach
  space.
\newblock \emph{Proceedings of the American Mathematical Society}, 59\penalty0
  (1):\penalty0 65--71, 1976.

\bibitem[Kwok et~al.(2019)Kwok, Lau, and Ramachandran]{kwok}
Tsz~Chiu Kwok, Lap~Chi Lau, and Akshay Ramachandran.
\newblock Spectral analysis of matrix scaling and operator scaling.
\newblock In \emph{2019 IEEE 60th Annual Symposium on Foundations of Computer
  Science (FOCS)}, pages 1184--1204, 2019.

\bibitem[Lemmens and Nussbaum(2012)]{lemmens_nussbaum_2012}
Bas Lemmens and Roger Nussbaum.
\newblock \emph{Nonlinear {P}erron–{F}robenius {T}heory}.
\newblock Cambridge Tracts in Mathematics. Cambridge University Press, 2012.

\bibitem[Lieb(1990)]{lieb}
{Elliott H.} Lieb.
\newblock Gaussian kernels have only {G}aussian maximizers.
\newblock \emph{Inventiones Mathematicae}, 102\penalty0 (1):\penalty0 179--208,
  December 1990.

\bibitem[Liu et~al.(2016)Liu, Courtade, Cuff, and Verd{\'u}]{liu2016smoothing}
Jingbo Liu, Thomas~A Courtade, Paul Cuff, and Sergio Verd{\'u}.
\newblock Smoothing {B}rascamp-{L}ieb inequalities and strong converses for
  common randomness generation.
\newblock In \emph{2016 IEEE International Symposium on Information Theory
  (ISIT)}, pages 1043--1047. IEEE, 2016.

\bibitem[Nikolov(2015)]{nikolov}
Aleksandar Nikolov.
\newblock Randomized rounding for the largest simplex problem.
\newblock In \emph{Proceedings of the Forty-Seventh Annual ACM Symposium on
  Theory of Computing}, STOC '15, page 861–870, New York, NY, USA, 2015.
  Association for Computing Machinery.

\bibitem[Pazy(1971)]{pazy_asymptotic_1971}
A.~Pazy.
\newblock Asymptotic behavior of contractions in hilbert space.
\newblock \emph{Israel Journal of Mathematics}, 9\penalty0 (2):\penalty0
  235--240, March 1971.

\bibitem[Reich(1973)]{reich_asymptotic_1973}
Simeon Reich.
\newblock Asymptotic behavior of contractions in {Banach} spaces.
\newblock \emph{Journal of Mathematical Analysis and Applications}, 44\penalty0
  (1):\penalty0 57--70, October 1973.

\bibitem[Sinkhorn(1964)]{sinkhorn}
Richard Sinkhorn.
\newblock {A Relationship Between Arbitrary Positive Matrices and Doubly
  Stochastic Matrices}.
\newblock \emph{The Annals of Mathematical Statistics}, 35\penalty0
  (2):\penalty0 876 -- 879, 1964.

\bibitem[Snyder(2016)]{snyder}
David~A. Snyder.
\newblock On the relation of {S}chatten norms and the {T}hompson metric.
\newblock \emph{arXiv:1608.03301}, 2016.

\bibitem[Sra and Hosseini(2015)]{sra-hosseini}
Suvrit Sra and Reshad Hosseini.
\newblock Conic geometric optimization on the manifold of positive definite
  matrices.
\newblock \emph{SIAM Journal on Optimization}, 25:\penalty0 713--739, 2015.

\bibitem[Sra et~al.(2018)Sra, Vishnoi, and Yildiz]{vishnoi}
Suvrit Sra, Nisheeth~K. Vishnoi, and Ozan Yildiz.
\newblock {On Geodesically Convex Formulations for the {B}rascamp-{L}ieb
  Constant}.
\newblock In \emph{Approximation, Randomization, and Combinatorial
  Optimization. Algorithms and Techniques (APPROX/RANDOM 2018)}, volume 116,
  pages 25:1--25:15, 2018.

\bibitem[Udriste(1994)]{udriste1994convex}
Constantin Udriste.
\newblock \emph{Convex functions and optimization methods on Riemannian
  manifolds}, volume 297.
\newblock Springer Science \& Business Media, 1994.

\bibitem[Weber and Sra(2021)]{weber2021projection}
Melanie Weber and Suvrit Sra.
\newblock Projection-free nonconvex stochastic optimization on {R}iemannian
  manifolds.
\newblock \emph{IMA Journal of Numerical Analysis}, 2021.

\bibitem[Weber and Sra(2022)]{weber2022riemannian}
Melanie Weber and Suvrit Sra.
\newblock Riemannian {O}ptimization via {F}rank-{W}olfe {M}ethods.
\newblock \emph{Mathematical Programming}, 2022.

\bibitem[Weber and Sra(2023)]{sra-weber-2022}
Melanie Weber and Suvrit Sra.
\newblock Global optimality for euclidean cccp under riemannian convexity.
\newblock In \emph{International Conference on Machine Learning}, 2023.

\bibitem[Zhang and Sra(2016)]{zhang}
Hongyi Zhang and Suvrit Sra.
\newblock First-order methods for geodesically convex optimization.
\newblock In \emph{Conference on Learning Theory}, pages 1617--1638, 2016.

\end{thebibliography}


\end{document}